\newcommand\s{\sigma}
\newcommand\w{\omega}
\newcommand\Lam{\Lambda}
\newcommand\ep{\varepsilon}
\newcommand\Vp{\varpi}
\newcommand\vp{\varphi}
\newcommand\oo{\mathfrak{o}}
\newcommand\Sf{{\mathfrak{S}}}
\newcommand\Af{{\mathfrak{A}}}
\newcommand\Bf{\mathfrak{B}}
\newcommand\Cf{\mathfrak{C}}
\newcommand\Df{\mathfrak{D}}
\newcommand\If{\mathfrak{I}}
\newcommand\Jf{\mathfrak{J}}
\newcommand\p{{\mathfrak{p}}}
\newcommand\cf{{\mathfrak{c}}}
\newcommand\gf{\mathfrak{g}}
\newcommand\dr{\mathrm{d}}
\newcommand\xb{{\bold{x}}}
\newcommand\yb{{\bold{y}}}
\newcommand\zb{{\bold{z}}}
\newcommand\C{{\mathbb{C}}}
\newcommand\Z{{\mathbb{Z}}}
\newcommand\K{{\mathbb{K}}}
\newcommand\Sb{{\mathbb{S}}}
\newcommand\Ib{{\mathbb{I}}}
\newcommand\Wb{{\mathbb{W}}}
\newcommand\Ss{{\mathscr{S}}}
\newcommand\Nc{{\mathcal{N}}}
\newcommand\Rc{{\mathcal{R}}}
\newcommand\Hc{{\mathcal{H}}}
\newcommand\Oc{{\mathcal{O}}}
\newcommand\1{\mathbf{1}}
\newcommand\diag{{\rm diag}}
\newcommand\tr{\mathrm{tr}}
\newcommand\nid{\noindent}
\newcommand\ac{\acute}
\newcommand\ol{\overline}
\newcommand\lt{\ltimes}
\newcommand\bs{\backslash}
\newcommand\GL{{\mathrm{GL}}}
\newcommand\SL{{\mathrm{SL}}}
\newcommand\GSp{{\mathrm{GSp}}}
\newcommand\In{{\mathrm{Ind}}}
\newcommand\Hom{{\mathrm{Hom}}}
\newcommand\cI{{\rm c\mathchar`- Ind}}
\newcommand\tm{\times}
\newcommand\Ch{{\mathrm{Ch}}}
\newcommand\supp{{\mathrm{supp}}}
\newcommand\vol{{\mathrm{vol}}}
\newcommand\Ir{{\mathscr{I}}}
\newcommand\Alg{{\mathscr{A}}}
\newcommand\bu{\bar{u}}
\numberwithin{equation}{section}
\newtheorem{thm}{Theorem}[section]
\newtheorem{prop}[thm]{Proposition}
\newtheorem{Cor}[thm]{Corollary}
\newtheorem{lem}[thm]{Lemma}
\newtheorem{rem}[thm]{Remark}
\keywords{Newform, Shalika period}
\subjclass[2020]{11F55, 11F70 }
\address{Department of Mathematical and Physical Sciences, Nara Woman University,
Kitauoyahigashi-machi, Nara 630-8506, Japan. }
\email{okazaki@cc.nara-wu.ac.jp}
\begin{document}

\title{Shalika newforms for GL($n$)}

\dedicatory{}

\author{Takeo Okazaki}


\maketitle

\begin{abstract}
Let $(\pi,V)$ be a generic irreducible representation of a general linear group over a $p$-adic field.
Jacquet, Piatetski-Shapiro, and Shalika gave an open compact subgroup $K$, so that the subspace $V^{K}$ consisting of $v \in V$ fixed by $K$ is one-dimensional.
If $\pi$ has a Shalika model $\Lambda$, then we call vectors in $\Lambda(V)$ the Shalika forms of $\pi$, and those in $\Lambda(V^{K})$ the Shalika newforms.
In this article, we give a method to determine all values of the Shalika newforms on the mirabolic subgroup in the case where $\pi$ is supercuspidal.
Using this result, we give another Shalika form with nice properties, which is not fixed by $K$ in the case where the character defining the Shalika model is ramified.
\end{abstract}

\tableofcontents

\section{Main results}\label{sec:main}
Let $n = 2m$ be an even integer.
Let $F$ be a $p$-adic field, and $S$ the Shalika subgroup of $G_{n} = \GL_{n}(F)$ consisting of matrices 
\begin{align*}
s = \begin{bmatrix}
a &b \\
&a
\end{bmatrix}.
\end{align*} 
Let $\psi: F \to \C^\tm$ be a nontrivial additive character.
Let $\chi: F^\tm \to \C^\tm$ be a continuous homomorphism.
If an irreducible representation $\pi$ of $G_{n}$ is realized in a subspace $\Sb_\pi(\chi)$ of the space consisting of continuous functions $J: G_{n} \to \C$ such that 
\begin{align*}
J(sg) = \chi \circ \det(a) \psi(\tr(a^{-1}b))J(g), 
\end{align*} 
then we call $J \in \Sb_\pi(\chi)$ a Shalika form of $\pi$ relevant to $\chi$.
Suppose that $\pi$ is generic with conductor $\cf_\pi$ and central character $\w_\pi$.
By the work of Jacquet, Piatetski-Shapiro, and Shalika \cite{JPSS}, there exists a unique $J \in \Sb_\pi(\chi)$ up to constant multiples such that 
\begin{align*}
\pi(k)J = \w_\pi(k_{n,n})J
\end{align*} 
for $k$ lying in the open compact subgroup
\begin{align*}
\Gamma_n(\cf_\pi) & = \Gamma(\cf_\pi) := \GL_{n}(\oo) \cap \begin{bmatrix}
1_{n-1}&  \\
& \Vp^{\cf_\pi}
\end{bmatrix}\GL_{n}(\oo)\begin{bmatrix}
1_{n-1}&  \\
& \Vp^{\cf_\pi}
\end{bmatrix}^{-1} \\
&= \{k \in \GL_{n}(\oo) \mid k_{n,1}, \ldots,  k_{n,n-1} \in \p^{\cf_\pi} \} 
\end{align*}  
where $\oo$ indicates the ring of integers of $F$, $\p = \Vp \oo$ the prime ideal, and $1_{n-1}$ the unit matrix of $G_{n-1}$.
We call such a Shalika form, a Shalika newform of $\pi$, and denote by $J^{new}$.

In this article, for supercuspidal representations, we give a method to determine all values of $J^{new}$ on the mirabolic subgroup $P_{n}$.
By it, we obtain: 
\begin{thm}\label{thm:main}
Let $\pi$ be a generic, irreducible, supercuspidal representation of $G_{n}$ realized in $\Sb_\pi(\chi)$.
Let $e$ be the conductor of $\chi$.
Let $P_n(\oo) \subset P_n$ denote the subgroup consisting matrices $p \in P_n$ with entries in $\oo$.
Assume that $\psi(\oo) = \{1\} \neq \psi(\p^{-1})$.
Then we have the followings.
\begin{enumerate}[i)]
\item If $e=0$, then the support of $J^{new}|_{P_{n}}$ contains $(S \cap P_{n}) P_{n}(\oo)$.
\item If $e > 0$, then the support of $J^{new}|_{P_{n}}$ equals the $(S \cap P_{n}, P_{n}(\oo))$-double coset of
\begin{align*}
g_n:= 
\left[
\begin{array}{ccccc|c}
\Vp^{e} &  &  &  & 1 &  \\ 
 & \Vp^{3e} &  & &\Vp^{e} &  \\ 
&  &\ddots & & \vdots & \\
 & & & \Vp^{(n-3)e} & \Vp^{(m-2)e} &  \\
 & & & & \Vp^{(m-1)e}&  \\ \hline
 &  & & & & 1_{m}
\end{array}
\right].
\end{align*} 
\end{enumerate}
\end{thm}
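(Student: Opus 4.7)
The plan is to analyze $J^{new}|_{P_n}$ via the right action of $P_n(\oo)$ and the left action of $S\cap P_n$. Since $P_n(\oo)\subset\Gamma(\cf_\pi)$ and $k_{n,n}=1$ for $k\in P_n(\oo)$, the transformation laws of the Shalika model and of the newform combine into
\begin{align*}
J^{new}(sgk)=\chi(\det a)\,\psi(\tr(a^{-1}b))\,J^{new}(g)
\end{align*}
for $s=\begin{bmatrix}a&b\\&a\end{bmatrix}\in S\cap P_n$ and $k\in P_n(\oo)$. Thus $J^{new}$ is determined on each $(S\cap P_n,P_n(\oo))$-double coset by its value at one representative $g_0$, and $J^{new}(g_0)\neq 0$ is possible only if the \emph{compatibility character} $(s,k)\mapsto \chi(\det a)\psi(\tr(a^{-1}b))$ is trivial on the stabilizer $\{(s,k):sg_0k=g_0\}$. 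My strategy is to: (A) enumerate the $(S\cap P_n,P_n(\oo))$-double cosets of $P_n$ via an Iwahori/Bruhat-type decomposition; (B) compute the compatibility character on each; (C) use the explicit method developed earlier in the paper to pin down $J^{new}$ at the distinguished representative.

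For case (i), the stabilizer at $g_0=1$ is $(S\cap P_n)\cap P_n(\oo)$, whose elements have $a\in P_m(\oo)\cap\GL_m(\oo)$ and thus $\det a\in\oo^\tm$ and $a^{-1}b\in\M_m(\oo)$. Since $e=0$ and $\psi(\oo)=\{1\}$, both factors of the compatibility character are $1$. Combined with the non-vanishing $J^{new}(1)\neq 0$, which falls out of the method's explicit calculation of $J^{new}$ at the identity, this yields $\supp(J^{new}|_{P_n})\supseteq (S\cap P_n)P_n(\oo)$.

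For case (ii), the same check at $g_0=1$ fails: $\chi$ is non-trivial on $\oo^\tm$, so one can choose $a\in P_m(\oo)\cap\GL_m(\oo)$ with $\chi(\det a)\neq 1$, which forces $J^{new}(1)=0$. The matrix $g_n$ is engineered precisely so that the stabilizer $g_nP_n(\oo)g_n^{-1}\cap (S\cap P_n)$ is a congruence subgroup on which the compatibility character is trivial: the diagonal entries $\Vp^e,\Vp^{3e},\ldots,\Vp^{(2m-3)e}$ together with the column $(1,\Vp^e,\ldots,\Vp^{(m-1)e})^\top$ are calibrated so that conjugation forces the resulting $a$ into a level-$e$ congruence subgroup (so $\chi(\det a)=1$) and forces $\tr(a^{-1}b)\in\oo$ (so $\psi(\tr(a^{-1}b))=1$). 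I would then verify, using the method of the paper, that $J^{new}(g_n)\neq 0$, and that for every other double-coset representative $g_0$ the stabilizer contains an element whose twist is non-trivial, so $J^{new}(g_0)=0$.

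The hardest step is the uniqueness assertion in (ii) — showing that $g_n$ is the \emph{only} double coset on which $J^{new}$ survives. I expect this to come from an inductive argument in $m$, exploiting the recursive block structure of $g_n$: the top-left $(m-1)\times(m-1)$ submatrix is, up to scaling, the analogue of $g_{n-2}$ for $\GL_{n-2}$, which suggests that the classification of compatible double cosets reduces in one step to a smaller-rank problem. Proving the non-vanishing $J^{new}(g_n)\neq 0$, rather than just verifying the compatibility, is the technical heart of the argument and is exactly where the explicit method developed in the earlier sections of the paper must do its real work.
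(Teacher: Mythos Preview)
Your proposal has a genuine gap in both the overall architecture and the key step.

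\textbf{The missing identification.} You treat $J^{new}|_{P_n}$ as an unknown function on $P_n$ and try to constrain its support via stabilizer compatibility. But the paper's proof does not proceed this way. The first real step is to \emph{identify} $J^{new}|_{P_n}$ with a completely explicit function: since $\pi$ is supercuspidal, $\pi|_{P_n}\simeq\psi_n$, so $J^{new}|_{P_n}$ is a pre-Shalika form of $\psi_n$; by the uniqueness of pre-Shalika models (Theorem~\ref{thm:1dimprtau}) together with the Whittaker model $\Omega_n$ and the fact that $L(s,\pi)=1$ forces $W^{new}|_{P_n}=\xi_n$ (Proposition~\ref{prop:LW}), one gets $J^{new}|_{P_n}=c\,J_{n,\chi}^\circ$ for the \emph{essential pre-Shalika form} $J_{n,\chi}^\circ$ built recursively by the maps $\theta_r^\chi\circ\phi_{r-2}^{(0)}$. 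Without this identification you have no handle on $J^{new}|_{P_n}$ beyond its equivariance, and equivariance alone does not determine the support.

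\textbf{The stabilizer argument is too weak.} Your plan for (ii) is to show that for every double coset representative $g_0\neq g_n$ the stabilizer contains an element on which the compatibility character is nontrivial. This is the content of Lemma~\ref{lem:vanishlem}, and the paper does use it---but only for the easy reductions (e.g.\ Proposition~\ref{prop:chiurfnonzeo} in the unramified case). It is \emph{not} strong enough to single out $g_n$ in the ramified case: there are many double cosets on which the compatibility character is trivial yet $J_{n,\chi}^\circ$ vanishes. The actual determination of $\supp(J_{n,\chi}^\circ)$ in Theorem~\ref{thm:suppJ} is an inductive computation of the integral $\theta_{n+2}^\chi$ applied to $\phi_n^{(0)}(J_{n,\chi}^\circ)$: one decomposes $\bar u_x$ via the double-coset lemma (Proposition~\ref{prop:LST}), reduces the inner integral to a product of Gauss sums $\gf(\chi^{-1},\psi_a)$, and uses the nonvanishing criterion $o(a)=-e$ to force the exponents $f_j=(2w(j)-1)e$, $l=me$, and $o(\beta_j)=(w(j)-m-1)e$. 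This Gauss-sum bookkeeping is what pins down the unique surviving coset; the stabilizer condition plays no role there. Your inductive intuition about the block structure of $g_n$ is correct, but the induction runs through the integral construction of $J_{n,\chi}^\circ$, not through a classification of compatible stabilizers.
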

\nid
Also for non-supercuspidal generic representations whose standard $L$-function equal $1$, the same thing holds under the assumption (\ref{ass:J1nz}) (c.f. Proposition \ref{prop:asaV}, Theorem. \ref{thm:suppJ}, \ref{thm:unrpsi}, \ref{thm:Ln}).

However, from our aesthetic perspective and various application ones, we think it is not desirable that $J^{new}$ vanishes at $1_n$ in the case where $\chi$ is ramified, contrary to the fact that Whittaker newforms never vanish at $1_n$.
Further, from some results of representations $(\tau,V)$ of algebraic groups, in the case where an $L$-function $L(s,\tau)$ is defined as a generator of a fractional ideal of the principal ideal domain $\C[q^{s},q^{-s}]$ ($q$ is the cardinality of $\oo/\p$) spanned by some zeta integrals, and a functional equation between $L(s,\tau)$ and $L(1-s,\tau^\vee)$ for the contragredient $\tau^\vee$ holds, we infer that the desirable newform theory should satisfy the followings.
\begin{enumerate}[i)]
\item for a suitable open compact subgroup $K$, and a homomorphism $\Omega: K \to \C^\tm$, the subspace $V^K_\Omega := \{v \in V \mid \tau(k)v = \Omega(k) v \}$ is one-dimensional.
\item the open compact subgroup $K$ depends on and the realization $V$, and the definition of the zeta integral.
\item if $V$ consists of functions on $G$, then $\xi \in V^K_\Omega$ does not vanish at the identity.
\item the zeta integral or its `suitable' arrangement of $\xi \in V^K_\Omega$ coincides with $L(s,\tau)$, and that of its suitable conjugate of $\xi$ coincides with the product of $L(s,\tau^\vee)$ and the root number appearing in the functional equation.
\end{enumerate}
By this reason, we construct a Shalika form $J_\pi$ with the following properties, assuming: 
\begin{itemize}
\item $\pi$ is unitary.
\item $J^{new}$ does not vanish at $1_n$ (resp. the above $g_n$) in the case of $e = 0$ (resp. $e>0$).
\item $\cf_\pi \ge me$. 
\end{itemize}
Put
\begin{align*}
l = \cf_\pi- (m-1)e. 
\end{align*}
Let $\Oc_r$ denote the ring of $r \tm r$ matrices with entries in $\oo$, and for $s \in \Z$, 
\begin{align*}
\Oc_r(s) &= \begin{bmatrix}
1_{r-1}&  \\
& \Vp^s
\end{bmatrix} \Oc_r \begin{bmatrix}
1_{r-1}&  \\
& \Vp^s
\end{bmatrix}^{-1}\\
\mathcal{R}_r(s) &= \Oc_r \cap \Oc_r(s).
\end{align*} 
We define the ring 
\begin{align*}
R_{\cf_\pi} = \Rc_n(l) \cap \begin{bmatrix}
1_{m}&  \\
& \Vp^{e}1_m
\end{bmatrix}\Oc_{n}\begin{bmatrix}
1_{m}&  \\
& \Vp^{e}1_m
\end{bmatrix}^{-1},
\end{align*} 
and denote by $\K(\cf_\pi)$ the units group of $R_{\cf_\pi}$, which is an open compact subgroup of $G_n$ and equals $\Gamma(\cf_\pi)$ in the case where $\chi$ is unramified.
Explicitly, $R_{\cf_\pi} $ consists of matrices 
\begin{align}
\begin{bmatrix}
A & B  \\
C& D
\end{bmatrix}, \ D \in \Rc_m(l),\ A, B, \begin{bmatrix}
\Vp^e 1_{m-1}&  \\
& \Vp^l
\end{bmatrix}^{-1} C \in \Oc_m. \label{eq:Rc}
\end{align} 
The first property of $J_\pi$ is that
\begin{align*}
\pi(k)J_\pi = \chi \circ \det(d_k) J_\pi, \ k \in \K(\cf_\pi)
\end{align*}
where $d_k$ indicates the $m \tm m$ block matrix of $k$ in the lower right corner.
For $i \in \Z$, let 
\begin{align*}
B_{m,i}= \{b \in B_m \cap \Oc_m \mid \det(b) \in \Vp^i \oo^\tm \},
\end{align*} 
where $B_m$ indicates the Borel subgroup of $G_m$.
The second property of $J_\pi$ is 
\begin{align*}
L(s,\pi) = \sum_{i = 0}^\infty c_i q^{i(-s+\frac{1}{2})}, c_i = \sum_{b \in B_{m,i}/B_{m,0}} J_\pi(\begin{bmatrix}
b &  \\
& 1_m
\end{bmatrix}).
\end{align*} 
In particular, $J_\pi(1_n) = 1$.
Set
\begin{align}
\upsilon_{\cf_\pi} = \begin{bmatrix}
\Vp^l& &  \\
& \Vp^e 1_{m-1}&  \\
& & w_m
\end{bmatrix} \in G_n,
\label{def:ups}
\end{align} 
where $w_m$ indicates the standard unti-diagonal Weyl element in $G_m$.
Define $J_\pi^* \in \Sb_{\pi^\vee}(\chi^{-1})$ by
\begin{align*}
J_\pi^*(g) = J_\pi(w_{n} {}^tg^{-1} \upsilon_{\cf_\pi}).
\end{align*} 
Let $\K(\cf_\pi)^* = \upsilon_{\cf_\pi}^{-1} \K(\cf_\pi) \upsilon_{\cf_\pi}$, which is the units group of the ring $R_{\cf_\pi}^*$ consisting of matrices 
\begin{align}
\begin{split}
& \begin{bmatrix}
A & B  \\
C& D
\end{bmatrix},  D \in w_m{}^t\Rc_m(l)w_m, \\ & A, B \in w_m{}^t\Oc_m(l')w_m,  C \in \Oc_m\begin{bmatrix}
\Vp^l &  \\
& \Vp^e 1_{m-1}
\end{bmatrix}
\end{split}\label{eq:Rc*}
\end{align} 
where $l' = l-e$. 
From the first property of $J_\pi$, it follows that 
\begin{align*} 
\pi(k)J_\pi^* = \chi \circ \det(d_k)^{-1} J_\pi^*, \ k \in \K(\cf_\pi)^*.
\end{align*}
For $k \in \Z$, let 
\begin{align*}
B_{m,k}^e = \{b \in B_m \mid \det(b) \in \Vp^k \oo^\tm, b_{11} \in \oo, b_{1j} \in \p^{-l'}; j >1 , b_{ij} \in \oo; i>1 \}.
\end{align*} 
Then, 
\begin{align*}
\ep_\pi L(s,\pi^\vee) = \sum_{i = 0}^\infty c_i^* q^{i(-s+\frac{1}{2})}, c_i^* = \sum_{b \in B_{m,k}^e/B_{m,0}^e} J_\pi^*(\begin{bmatrix}
b &  \\
& 1_m
\end{bmatrix}).
\end{align*} 
In particular, $J_\pi^*(1_n)$ equals $\ep_\pi$, the root number of $\pi$.
So, our $J_\pi$ and $\K(\cf_\pi)$ satisfies the above conditions except for i), the one-dimensionality.
This problem will be discussed in a future work.

Another motivation of the above construction is a theta lift from $G_4$ to $\GSp_4(F)$.
It is known that each generic, irreducible, admissible representation of $\GSp_4(F)$ is a theta lift from a generic, irreducible representation $\pi$ with a Shalika model.
In a forthcoming paper, we really construct a Whittaker `newform' of a generic, irreducible representation of $\GSp_4(F)$ using the above $J_\pi$, and show that the inequality $\cf_\pi \ge 2e$ holds for any $\pi$.
We think that this supports the validity of our Shalika form $J_\pi$ and open compact subgroup $\K(\cf_\pi)$.

This article is organized as follows.
In sec. \ref{sec:pre}, we introduce several fundamental terminolgies used throughout. 
In sec. \ref{sec:PSM}, we show the uniqueness of pre-Shalika models, and devote to compute the values of  a `canonical' pre-Shalika form, which plays an essentially important role.
These notation are defined for representations of the mirabolic subgroups.
The method to determine all values of $J^{new}$ on the mirabolic subgroup $P_{n}$ is also given.
In sec. \ref{sec:Wm}, using the results of the previous section, we give a Whittaker model of supercuspidal, generic, irreducible representation $\pi$ of $G_n$ realized in $\Sb_\pi(\chi)$.
In sec. \ref{sec:HO}, we prove a proposition of the previous section by a Hecke theory.
In sec. \ref{sec:ZI}, we construct $J_\pi$, and show that it has the above nice properties.

{\bf Notation:} 
Throughout $F$ denotes a $p$-adic field with ring $\oo$ of integers.
Let $\p = \Vp \oo$ denote the prime ideal of $\oo$ and $q$ the cardinality of the residue filed.
Let $\psi: F \to \C^\tm$ denote a nontrivial additive character, where $\C^\tm$ indicates the multiplicative group of the complex number field $\C$.
Let $\chi: F^\tm \to \C^\tm$ denote a continuous homomorphism, and $e$ its (order of) conductor.
Let $o(x)$ denote the $p$-adic order of $x \in F$, and $\nu(x)$ the $p$-adic norm: $\nu(x) = q^{-o(x)}$.

If $G$ is a group, $h,g$ are elements in $G$ and $H$ is a subgroup of $G$, then we write $h^g$ and $H^g$ for $g h g^{-1}$ and $\{h^g \mid h \in H \}$, respectively.
We use the notation for the groups: 
\begin{align*}
G_r &= \GL_r(F) \\
B_r & = \mbox{the standard Borel subgroup of $G_r$}\\
D_r &= \mbox{the diagonal matrices in $G_r$} \\
N_r &= \mbox{the unipotent matrices in $B_r$} \\
P_r &= \{(p_{ij}) \in G_r \mid p_{rj} = 0 \ \mbox{for $j < r$}, \ p_{rr} = 1 \} \\
U_r &= \{(u_{ij}) \in N_r \mid u_{ij} = \delta_{ij} \ \mbox{for $j <r$} \} \\
\bar{U}_r & = \mbox{the opposite of $U_r$} \\
K_r&= \GL_r(\oo) \\
\Sf_r & = \mbox{the symmetric group of degree $r$}.
\end{align*}  
For a permutation $w$ of the set $\{1, \ldots, r\}$, we also denote by $w$ the permutation matrix in $G_r$ defined by 
\begin{align*}
w[g_{i,j}]w^{-1} = [g_{w(i),w(j)}],
\end{align*} 
and identify $\Sf_r$ a subgroup of $G_r$ naturally.
When a positive integer $l$ is clear from the context, let $\acute{*}: G_r \to G_{r+l}$ denote the embedding 
\begin{align*}
g \longmapsto \ac{g} = \begin{bmatrix}
g &  \\
& 1_{l}
\end{bmatrix},
\end{align*} 
and for a function $f$ on a subset of $G_{r+l}$ containing $\ac{G}_r$, let $\ac{f}$ denote the function on $G_r$ by the pullback, where $1_{l} \in G_r$ indicates the identity.
\section{Preparation}\label{sec:pre}
Let $r$ be an integer larger than $1$.
Fix $\psi$.
Denote also by $\psi$ the homomorphism
\begin{align}
 N = N_r \ni n \longmapsto \prod_{1 \le i \le r-1} \psi(n_{i,i+1}) \in \C^\tm. \label{eq:psiex}
\end{align} 
Let $\pi$ be an irreducible representation of $G= G_r$.
It is known that $\Hom_{G}(\pi, \In_N^{G} \psi)$, the $\C$-space of Whittaker models, is at most one-dimensional, where $\In$ indicates the induction functor.
If $\pi$ has a Whittaker model,  then $\pi$ is called {\it generic}.
In this case, let $\Wb_\pi$ denote the image of $\pi$ under it, and call $W \in \Wb_\pi$ {\it Whittaker forms of $\pi$}.
For a positive integer $m$, let $\Gamma_r(m) = \Gamma(m) \subset G_r$ be the open compact subgroup 
\begin{align}
\{k \in K_r \mid k_{r,1}, \ldots,  k_{r,r-1} \in \p^{m} \}. \label{def:Gc}
\end{align} 
Assume that 
\begin{align}
\psi(\oo) = \{1\} \neq \psi(\p^{-1})  \label{ass:psi}.
\end{align}
By the work of Jacquet, Piatetski-Shapiro, and Shalika \cite{JPSS}, there exists a unique $W^{new} \in \Wb_\pi$ such that 
\begin{align*}
W^{new}(1_r) = 1, \ \ \pi(k) W = \w_\pi(k_{rr})W, k \in \Gamma(\cf_\pi),
\end{align*} 
where $\cf_\pi$ and $\w_\pi$ indicate the conductor and central character of $\pi$, respectively. 
We call $W^{new}$ the {\it canonical Whittaker newform} of $\pi$.

If $r$ is even, then {\it the Shalika subgroup $S_r =S \subset G_r$} is defined to be the subgroup consisting of the matrices 
\begin{align*}
s = \begin{bmatrix}
a &  \\
&a
\end{bmatrix}
\begin{bmatrix}
1_{r/2}& b \\
& 1_{r/2}
\end{bmatrix}.
\end{align*} 
For $\chi$, let $\chi_\psi: S \to \C^\tm$ denote the homomorphism  
\begin{align}
s \longmapsto \chi \circ \det(a) \psi(\tr(b)). \label{def:chipsi}
\end{align} 
F. Chen, and B. Sun \cite{C-S} showed that $\Hom_{G}(\pi, \In_S^G(\chi_\psi))$, the $\C$-space of Shalika models, is at most one-dimensional.
If $\pi$ has a Shalika model, then let $\Sb_\pi(\chi)$ denote the image of $\pi$ under it, and call $J \in \Sb_\pi(\chi)$ {\it Shalika forms of $\pi$ relevant to $\chi$}.
By definition, 
\begin{align}
J(s g) = \chi_\psi(s) J(g). \label{eq:propSh}
\end{align} 
By the above Whittaker newform theory, there exists a unique $J \in \Sb_\pi(\chi)$ up to constant multiples such that 
\begin{align*}
\pi(k) J = \w_\pi(k_{rr})J, k \in \Gamma(\cf_\pi),
\end{align*} 
which are called {\it the Shalika newforms of $\pi$ (relevant to $\chi$)}.

Let
\begin{align*}
S_{r}^\circ = S_{r} \cap P_{r}.
\end{align*} 
For an irreducible smooth representation $\tau$ of $P_r$, we call the $\C$-space 
\begin{align*}
\Hom_{P_r}(\tau, \In_{S_r^\circ}^{P_r}(\chi_\psi)) \simeq \Hom_{S_r^\circ}(\tau, \chi_\psi)
\end{align*} 
{\it the pre-Shalika models of $\tau$ relevant to $\chi$}, and will show that it is at most one-dimensional in the next section.
If $\tau$ has a pre-Shalika model, then let $\Ib_\tau(\chi)$ denote its image of $\tau$, and call vectors in $\Ib_\tau(\chi)$ {\it pre-Shalika forms of $\tau$ relevant to $\chi$}.
\section{Pre-Shalika model}\label{sec:PSM}
In this section, firstly we show that, for an arbitrary $\chi$, the irreducible smooth representation
\begin{align}
\psi_n := \cI_{N_{n}}^{P_{n}}(\psi) \label{def:psim}
\end{align}
has a unique pre-Shalika model up to constant multiples, where $\cI$ indicates the compact induction functor.
By this uniqueness, the restriction to $P_n$ of a Shalika form of a supercuspidal representation coincides with a pre-Shalika form of $\psi_n$, and vice versa (c.f. Prop. \ref{prop:SPS}, \ref{prop:asaV}).
Secondly, we compute the support of a $P_n(\oo)$-invariant pre-Shalika form, playing the essentially important role in this article, of $\psi_n$ constructed by the pre-Shalika model, where 
\begin{align*}
P_n(\oo) = \{p \in P_n \mid p_{ij} \in \oo \}.
\end{align*} 
\subsection{Uniqueness of pre-Shalika models}
For an $l$-group $G$ (c.f. \cite{B-Z}), we denote by $\Alg(G)$ (resp. $\Ir(G)$) the category of smooth (resp. smooth irreducible) $\C[G]$-modules.
For $\tau \in \Alg(P_r) \cup \Alg(G_r)$, we call {\it the lift of $\tau$ to $P_l$} with $l > r$ the representation 
\begin{align*}
\tau_l := 
\begin{cases}
\Psi^{l-r} (\tau) & \mbox{if $\tau \in \Alg(P_r)$} \\
\Psi^{l-r-1}\circ \Upsilon(\tau) & \mbox{if $\tau \in \Alg(G_r)$.} 
\end{cases}
\end{align*}
where $\Psi:\Alg(P_s) \to \Alg(P_{s+1})$ and $\Upsilon:\Alg(G_s) \to \Alg(P_{s+1})$ are the functors defined by 
\begin{align*}
&\Psi: \tau \longmapsto \cI_{\ac{P}_{s}U_{s+1}}^{P_{s+1}} (\tau \lt \psi) \\
& \Upsilon: \tau \longmapsto \tau \lt \1_{U_{s+1}}.
\end{align*} 
Here $\tau \lt \psi$ (resp. $\tau \lt \1_{U_{s+1}}$) indicates the representation sending $\ac{p} u \in \ac{P}_sU_{s+1}$ to $\psi(u)\tau(p)$ (resp. $\tau(p)$).
Abbreviate $\In_{S^\circ_{n}}^{P_{n}} (\chi_\psi)$ as $\Ib^{n}(\chi)$.
\begin{prop}\label{prop:redG}
Let $\tau \in \Ir(G_{r})$.
Let $r$ be a positive integer.
For an even integer $n > r$, we have the followings.
\begin{enumerate}[i)]
\item If $r$ is odd, then $\tau_{n}$ has no pre-Shalika model.
\item If $r$ is even, then  
\begin{align*}
\dim \Hom_{P_{n}}(\tau_{n}, \Ib^{n}(\chi)) = \dim \Hom_{G_{r}}(\tau, \In_{S_{r}}^{G_r}(\nu^{\frac{n- r}{2}}\chi_\psi)).
\end{align*} 
\end{enumerate}
\end{prop}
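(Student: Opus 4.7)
The strategy is to strip the functors $\Psi$ and $\Upsilon$ off the first argument by iterated Frobenius reciprocity, and then to identify the resulting $G_r$-module via a Mackey analysis of $\Ib^n(\chi)$. For the compact induction defining $\Psi$, Frobenius reciprocity gives, for any $\sigma \in \Alg(P_s)$ and any smooth $P_{s+1}$-module $\rho$,
\begin{align*}
\Hom_{P_{s+1}}(\Psi(\sigma), \rho) \cong \Hom_{P_s}(\sigma, \Psi^-\rho),
\end{align*}
where $\Psi^-\rho := \{v \in \rho \mid \rho(u)v = \psi(u)v,\ u \in U_{s+1}\}$ inherits a natural $P_s$-action. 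Iterating this $n-r-1$ times, and then using the adjunction $\Hom_{P_{r+1}}(\tau \lt \1_{U_{r+1}}, \rho) \cong \Hom_{G_r}(\tau, \rho^{U_{r+1}})$, the problem reduces to identifying, as a $G_r$-module,
\begin{align*}
M := \bigl[(\Psi^-)^{n-r-1} \Ib^n(\chi)\bigr]^{U_{r+1}}.
\end{align*}

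Next I would compute $M$ by Mackey theory for the restriction of $\In_{S_n^\circ}^{P_n}(\chi_\psi)$ to $H := \ac{G}_r U_{r+1}\cdots U_n$. The chain of $\psi$-eigenspace/invariance conditions on the successive unipotent factors selects exactly those double cosets $w \in S_n^\circ \bs P_n / H$ for which $\chi_\psi^w$ agrees with the prescribed character on each $wS_n^\circ w^{-1} \cap U_j$ (equal to $\psi$ for $j > r+1$ and trivial for $j = r+1$). Exploiting the block description $S_n^\circ = \{\begin{bmatrix} a & b \\ & a \end{bmatrix} : a \in P_m,\ b \in \Mat_m\}$, one checks that at most one such double coset can survive.

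For (i), when $r$ is odd, no admissible $w$ exists, because the partition $\ac{G}_r \hookrightarrow P_n$ cannot be matched to the equal-size block-pairing symmetry of an element of $S_n^\circ$, so $M = 0$ and both sides vanish. For (ii), when $r$ is even, a unique representative $w$ realizes $wS_n^\circ w^{-1} \cap \ac{G}_r$ as the Shalika subgroup $S_r \subset G_r$ with $\chi_\psi^w$ restricting back to $\chi_\psi$, and $M \cong \In_{S_r}^{G_r}(\nu^{(n-r)/2}\chi_\psi)$. The twist $\nu^{(n-r)/2}$ is the square root of the modular character arising in the passage from iterated $\psi$-eigenspaces to smooth induction, in direct analogy with the Bernstein--Zelevinsky computation of derivatives of parabolic inductions.

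The main obstacle will be the Mackey bookkeeping in the third paragraph: choosing an explicit representative of the unique admissible double coset, ruling out all other candidates (in (i) this requires a careful block-by-block parity obstruction), and tracking the modular character through the whole chain of eigenspace functors so that the exponent emerges as precisely $(n-r)/2$.
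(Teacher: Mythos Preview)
Your outline is correct in spirit, but the paper takes a different and somewhat more economical route. Rather than stripping off one $\Psi$ at a time and then running a single Mackey analysis on $S_n^\circ \bs P_n / (\ac{G}_r U_{r+1}\cdots U_n)$, the paper descends \emph{two steps at a time}, keeping the ambient index even throughout. Concretely, it proves three auxiliary lemmas: (a) for $r$ odd, $\Upsilon(\tau)=\tau_{r+1}$ already has no pre-Shalika model (an immediate character computation on $U_{r+1}$); (b) for $\tau\in\Alg(G_n)$ with $n$ even, $\dim\Hom_{P_{n+2}}(\tau_{n+2},\Ib^{n+2}(\chi))=\dim\Hom_{G_n}(\tau,\In_{S_n}^{G_n}(\nu\chi_\psi))$; (c) for $\tau\in\Alg(P_n)$ with $n$ even, $\dim\Hom_{P_{n+2}}(\tau_{n+2},\Ib^{n+2}(\chi))=\dim\Hom_{P_n}(\tau,\Ib^n(\nu\chi))$. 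The proposition then follows by iterating (c) down to level $r+1$ or $r+2$ and finishing with (a) or (b). Lemmas (b) and (c) are established not by the geometric Mackey filtration you sketch but by a distributional argument in the style of Kable: one shows that a $V_\pi$-distribution realizing an element of $\Hom_H(\cI_Q^G(\pi\ltimes\xi),\rho)$ is necessarily supported on the single double coset $QH$, whence the Hom space is identified with $\Hom_{Q_0\cap H}(\pi,\Delta_{Q\cap H}\Delta_H^{-1}\rho)$; the verification then reduces to checking two structural conditions (the analogues of your ``only one admissible $w$'' claim) for a specific choice of $Q,H,\xi,\rho$ inside $P_{n+2}$.

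What each approach buys: your direct Mackey plan is conceptually transparent but has to confront the fact that restriction of a full induction yields only a filtration, so you would need either an orbit-ordering argument (open versus closed orbit) or an Ext-vanishing step to conclude that the unique surviving graded piece actually computes the Hom space; you would also have to carry the modular twist through $n-r-1$ applications of $\Psi^-$. The paper's two-step distributional descent sidesteps the filtration issue entirely (the support condition forces the distribution onto the single coset $QH$), packages the modular character once and for all as $\Delta_{Q\cap H}\Delta_H^{-1}$, and makes the parity obstruction for odd $r$ a one-line observation at the base of the induction rather than a block-combinatorics argument over the full double coset space.
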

\nid
This follows from the induction on $n$, and the following three lemmas.
\begin{lem}
For $\tau \in \Alg(G_{r})$ with $r$ odd, $\tau_{r+1}$ does not have a pre-Shalika model relevant to any $\chi$.
\end{lem}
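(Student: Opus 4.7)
The plan is to show that any pre-Shalika map for $\tau_{r+1}$ must vanish by restricting it to the unipotent radical $U_{r+1}$, on which $\tau_{r+1} = \Upsilon(\tau) = \tau \lt \1_{U_{r+1}}$ acts trivially by definition, and then producing elements of $S^\circ_{r+1} \cap U_{r+1}$ on which $\chi_\psi$ is non-trivial.

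Set $m = (r+1)/2$, which is a positive integer precisely because $r$ is odd. Writing a general element of $S^\circ_{r+1} = S_{r+1} \cap P_{r+1}$ in the form $s = \begin{bmatrix} a & b \\ 0 & a \end{bmatrix}$ with $a \in G_m$ and $b \in \Mat_m(F)$, the mirabolic condition forces the last row of $a$ to be $(0,\ldots,0,1)$, i.e.\ $a \in P_m$. Intersecting further with $U_{r+1}$, whose elements differ from $1_{r+1}$ only in the last column, one obtains $a = 1_m$ together with the requirement that the columns $1, \ldots, m-1$ of $b$ vanish; only the last column of $b$ is free. On any such element, $\chi_\psi(s) = \psi(\tr(a^{-1}b)) = \psi(b_{m,m})$, and as $b_{m,m}$ ranges over $F$ this defines a non-trivial character by the non-triviality of $\psi$.

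Finally, for any $T \in \Hom_{S^\circ_{r+1}}(\tau_{r+1}, \chi_\psi)$ and any $v$ in the space of $\tau_{r+1}$, the equivariance $T(\tau_{r+1}(s)v) = \chi_\psi(s) T(v)$ combined with $\tau_{r+1}|_{U_{r+1}} = \mathrm{id}$ yields $T(v) = \psi(b_{m,m}) T(v)$ for every $b_{m,m} \in F$, forcing $T(v) = 0$ and hence $T = 0$. There is no substantial obstacle; the only care needed is the bookkeeping that identifies $S^\circ_{r+1} \cap U_{r+1}$ together with the restriction of $\chi_\psi$ to it, and this is where the parity hypothesis enters (for $r$ even the Shalika subgroup $S_{r+1}$ is not even defined).
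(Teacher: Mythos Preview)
Your proof is correct and takes essentially the same approach as the paper: both exploit that $\tau_{r+1} = \Upsilon(\tau)$ acts trivially on $U_{r+1}$, while $\chi_\psi$ is nontrivial on $S^\circ_{r+1}\cap U_{r+1}$ via the entry $u_{m,r+1}=b_{m,m}$ (the paper writes this as $\psi(u_{\frac{r+1}{2},r+1})$). Your version is slightly more explicit in identifying $S^\circ_{r+1}\cap U_{r+1}$ and in invoking Frobenius reciprocity to work with a functional $T$, but the argument is the same.
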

\begin{proof}
Let $\lambda \in \Hom_{P_{r+1}}(\tau_{r+1},\Ib^{r+1}(\chi))$, and $f \in \tau_{r+1}$.
By the definition of $\Upsilon$ and that of the pre-Shalika space, 
\begin{align*}
\lambda(f) = \lambda(\tau_{r+1}(u) f) = \psi(u_{\frac{r+1}{2},r+1})\lambda(f), \ u \in U_{r+1}.
\end{align*}
Hence $\lambda(f)$ vanishes, and the assertion.
\end{proof}
\begin{lem}\label{lem:inGchi}
For $\tau \in \Alg(G_{n})$ with $n$ even, and an arbitrary $\chi$, 
\begin{align*}
\dim \Hom_{P_{n+2}}(\tau_{n+2}, \Ib^{n+2}(\chi))= \dim \Hom_{G_{n}}(\tau, \In_{S_{n}}^{G_{n}}(\nu\chi_\psi)).
\end{align*} 
\end{lem}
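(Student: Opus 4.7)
The strategy is Frobenius reciprocity combined with a Mackey double-coset analysis. First I unfold definitions:
\begin{align*}
\tau_{n+2} = \Psi(\Upsilon(\tau)) = \cI_{\ac{P}_{n+1}U_{n+2}}^{P_{n+2}}(\sigma),
\end{align*}
where $\sigma$ is the representation of $\ac{P}_{n+1}U_{n+2}$ that equals $\tau$ on the $G_n$-Levi, is trivial on the unipotent $\ac{U}_{n+1}$ coming from $\Upsilon$, and equals $\psi$ on $U_{n+2}$. Frobenius reciprocity for compact induction then gives
\begin{align*}
\Hom_{P_{n+2}}(\tau_{n+2},\Ib^{n+2}(\chi)) \simeq \Hom_{\ac{P}_{n+1}U_{n+2}}(\sigma, \In_{S_{n+2}^\circ}^{P_{n+2}}(\chi_\psi)|_{\ac{P}_{n+1}U_{n+2}}).
\end{align*}

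The next step applies Mackey theory to the restricted induction on the right: it is filtered by subquotients $\In_{\ac{P}_{n+1}U_{n+2}\cap {}^wS_{n+2}^\circ}^{\ac{P}_{n+1}U_{n+2}}({}^w\chi_\psi)$ indexed by $w \in S_{n+2}^\circ \bs P_{n+2} / \ac{P}_{n+1}U_{n+2}$. The core of the proof is to enumerate these double cosets and to show that only one contributes. Representatives can be chosen by matching the Shalika block decomposition of $S_{n+2}$ against the $(n+1,1)$-block decomposition underlying $\ac{P}_{n+1}U_{n+2}$. For each $w$ other than a uniquely distinguished $w_0$, I claim that the restriction of ${}^w\chi_\psi$ to $\ac{P}_{n+1}U_{n+2}\cap {}^wS_{n+2}^\circ \cap U_{n+2}$ is either nontrivial on the kernel of $\psi$ or trivial where $\psi$ is not --- incompatible either way with the character carried by $\sigma$ on $U_{n+2}$ --- forcing the corresponding $\Hom$ to vanish.

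The distinguished representative $w_0$ is chosen to align the upper-left $G_{m+1}\subset S_{n+2}$ with the Levi $G_n$ of $\ac{P}_{n+1}$. I then verify that $\ac{P}_{n+1}U_{n+2}\cap {}^{w_0}S_{n+2}^\circ$ is a semidirect product of a subgroup isomorphic to $S_n$ with a unipotent subgroup $U'\subset \ac{U}_{n+1}U_{n+2}$ on which ${}^{w_0}\chi_\psi$ matches the inflated $\psi$ of $\sigma$. After integrating out $U'$ via a second Frobenius reciprocity, the Hom reduces to $\Hom_{S_n}(\tau|_{S_n},\chi_\psi\cdot\delta)$ for some modular-twist factor $\delta$, and one final Frobenius reciprocity for $S_n\hookrightarrow G_n$ yields $\Hom_{G_n}(\tau,\In_{S_n}^{G_n}(\chi_\psi\cdot\delta))$. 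A direct comparison of the modular functions of $\ac{P}_{n+1}U_{n+2}$, $S_{n+2}^\circ$, and of smooth versus compact induction on the surviving coset identifies $\delta$ with $\nu$.

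The main obstacle will be the Mackey double-coset analysis: explicitly producing representatives $w$ and proving character incompatibility for all but $w_0$. This requires careful combinatorics tracking which entries of $U_{n+2}$ are identified with Shalika off-diagonal or Shalika-diagonal positions under conjugation by $w$. The final modular-character bookkeeping that pins down $\delta=\nu$ is routine but delicate.
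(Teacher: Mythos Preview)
Your overall strategy---reduce to a single double coset and identify the surviving contribution with $\Hom_{S_n}(\tau,\nu\chi_\psi)$---is the same as the paper's. But your very first step is not valid as written. You invoke
\[
\Hom_{P_{n+2}}\bigl(\cI_{\ac P_{n+1}U_{n+2}}^{P_{n+2}}\sigma,\ \In_{S_{n+2}^\circ}^{P_{n+2}}\chi_\psi\bigr)\ \simeq\ \Hom_{\ac P_{n+1}U_{n+2}}\bigl(\sigma,\ (\In_{S_{n+2}^\circ}^{P_{n+2}}\chi_\psi)|_{\ac P_{n+1}U_{n+2}}\bigr),
\]
i.e.\ that compact induction is left adjoint to restriction. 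That adjunction holds only when the inducing subgroup is \emph{open}; here $\ac P_{n+1}U_{n+2}$ is a proper closed subgroup of $P_{n+2}$ (the quotient is $P_{n+1}\backslash G_{n+1}$, which is not compact), so there is no such reciprocity. The correct move is the Frobenius reciprocity for the \emph{full} induction on the target side:
\[
\Hom_{P_{n+2}}(\tau_{n+2},\Ib^{n+2}(\chi))\ \simeq\ \Hom_{S_{n+2}^\circ}\bigl((\cI_{Q}^{P_{n+2}}\sigma)|_{S_{n+2}^\circ},\ \chi_\psi\bigr),
\]
after which you apply Mackey to the restriction of the \emph{compactly} induced module to $S_{n+2}^\circ$, not the other way around.

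Once corrected, your argument and the paper's coincide. The paper packages the entire Mackey step into a single distributional proposition (its Proposition~3.6): one writes elements of $\Hom_{H}(\cI_Q^G(\pi\ltimes\xi),\rho)$ as $V_\pi$-distributions on $G$ with prescribed left-$Q$ and right-$H$ equivariance, shows via the character condition (their (3.2)) that any such distribution is supported on the single closed double coset $QH$, and then reads off the bijection with $\Hom_{Q_0\cap H}(\pi,\Delta_{Q\cap H}\Delta_H^{-1}\rho)$. For the present lemma one takes $Q_0=\ac G_n$, $U=\ac U_{n+1}U_{n+2}$ with $\xi$ trivial on $\ac U_{n+1}$ and equal to $\psi$ on $U_{n+2}$, and $H=\eta S_{n+2}^\circ\eta^{-1}$; then $Q_0\cap H=\ac S_n$ and the modular factor gives the $\nu$-twist. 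The advantage of the paper's formulation is that it yields \emph{equality} of dimensions directly, without having to argue separately that the Mackey filtration does not obstruct surjectivity---in your setup that would require checking that the surviving orbit is closed so that its contribution sits as a quotient, which is exactly what the paper's condition (3.2) encodes.
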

\begin{lem}\label{lem:delta2}
For $\tau \in \Alg(P_{n})$ with $n$ even, and an arbitrary $\chi$, 
\begin{align*}
\dim \Hom_{P_{n+2}}(\tau_{n+2}, \Ib^{n+2}(\chi)) = \dim \Hom_{P_{n}}(\tau, \Ib^{n}(\nu\chi)).
\end{align*} 
\end{lem}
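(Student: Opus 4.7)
The plan is to apply the Frobenius-type adjunction for the compact induction functor $\Psi$, reducing the claim to an identification of a double twisted Jacquet module. Since $\Psi\sigma = \cI_{\ac{P_s}U_{s+1}}^{P_{s+1}}(\sigma\lt\psi)$ is compact induction, the Bernstein--Zelevinsky formalism (\cite{B-Z}) provides a right adjoint functor $\Psi^-:\Alg(P_{s+1})\to\Alg(P_s)$, namely the $(U_{s+1},\psi)$-twisted Jacquet module regarded as a $P_s$-module via the semidirect decomposition $P_{s+1}=\ac{P_s}\lt U_{s+1}$, satisfying
\begin{align*}
\Hom_{P_{s+1}}(\Psi\sigma,\pi) \;\simeq\; \Hom_{P_s}(\sigma,\Psi^-\pi).
\end{align*}
Iterating this adjunction twice reduces the lemma to establishing the $P_n$-module isomorphism $(\Psi^-)^2\,\Ib^{n+2}(\chi)\simeq\Ib^n(\nu\chi)$.

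To compute $(\Psi^-)^2\,\Ib^{n+2}(\chi)$, set $H:=\ac{\ac{P_n}}\,\ac{U_{n+1}}\,U_{n+2}\subset P_{n+2}$, and observe that the two nested applications of $\Psi$ equip the unipotent part $\ac{U_{n+1}}U_{n+2}$ with the character $h\mapsto\psi(h_{n,n+1})\psi(h_{n+1,n+2})$. Mackey's decomposition then yields
\begin{align*}
\In_{S^\circ_{n+2}}^{P_{n+2}}(\chi_\psi)\big|_H \;\simeq\; \prod_{w\,\in\,H\bs P_{n+2}/S^\circ_{n+2}} \In_{H\,\cap\, wS^\circ_{n+2}w^{-1}}^H\bigl(\chi_\psi^{\,w}\bigr).
\end{align*}
Taking $(\psi\ot\psi)$-twisted coinvariants kills every $w$-summand for which $\chi_\psi^{\,w}$ disagrees with $\psi\ot\psi$ on $\ac{U_{n+1}}U_{n+2}\cap wS^\circ_{n+2}w^{-1}$. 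The non-degeneracy of $\psi$ (assumption (\ref{ass:psi})) should then single out a unique surviving double coset $Hw_0S^\circ_{n+2}$, with $w_0$ a Weyl element that realigns the last two indices into the off-diagonal Shalika position.

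For this $w_0$, the intersection $H\cap w_0S^\circ_{n+2}w_0^{-1}$ identifies, after absorbing the twist-compatible unipotent part, with the image of $S^\circ_n\subset P_n$ under the double embedding $\ac{\ac{\cdot}}$. The restriction of $\chi_\psi^{\,w_0}$ to this image coincides with $\chi_\psi$ on $S^\circ_n$, while the ratio of modular characters $\delta_H/\delta_{H\cap w_0S^\circ_{n+2}w_0^{-1}}$ evaluated along the $\ac{\ac{S^\circ_n}}$-direction contributes precisely the twist by $\nu$. Assembling these identifications gives $(\Psi^-)^2\,\Ib^{n+2}(\chi)\simeq\Ib^n(\nu\chi)$, completing the proof. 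The main obstacle is the orbit analysis: parameterizing $H\bs P_{n+2}/S^\circ_{n+2}$ in Bruhat-type coordinates, verifying the uniqueness of $w_0$ from the non-degeneracy of the twist, and carefully tracking the modular shift so as to obtain $\nu$ rather than some other power of $\nu$.
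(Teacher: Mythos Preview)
Your strategy via the adjoint $\Psi^-$ is dual to the paper's and lands on the same orbit problem. The paper first uses Frobenius reciprocity on the target to rewrite both sides as $\Hom_{S^\circ_{n+2}}(\tau_{n+2},\chi_\psi)$ and $\Hom_{S^\circ_n}(\tau,\nu\chi_\psi)$, and then proves a general distributional identity (the proposition following Proposition~\ref{prop:distprod}): for $G$, $Q=Q_0U$, $H$, $\xi$, $\rho$ satisfying the support condition (\ref{eq:QH}) and the decomposition (\ref{eq:QHst}),
\[
\dim\Hom_H\bigl(\cI_Q^G(\pi\lt\xi),\rho\bigr)=\dim\Hom_{Q_0\cap H}\bigl(\pi,\Delta_{Q\cap H}\Delta_H^{-1}\rho\bigr).
\]
They then specialize to $G=P_{n+2}$, $Q_0=\ac{P}_n$, $U=U_{n+1}U_{n+2}$, $H=\eta S^\circ_{n+2}\eta^{-1}$ with $\eta=\ac{h}_{n+1}$ the explicit permutation (\ref{def:hn+1}), and verify (\ref{eq:QH}) by testing against the elementary matrices $1_{n+2}+xE_{i,n+2}$ and $1_{n+2}+y(E_{i,n+1}+E_{i+m,n+2})$; the modular ratio $\Delta_{Q\cap H}\Delta_H^{-1}$ is then read off as $|\det A|=\nu$ on the block-diagonal part of $S^\circ_n$. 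Your route instead computes the twisted Jacquet module $(\Psi^-)^2\Ib^{n+2}(\chi)$, which (if carried out) yields a module isomorphism---stronger than the dimension equality---and plugs directly into the Bernstein--Zelevinsky calculus; the paper's route sidesteps any exactness statement for $\Psi^-$ and works at the level of Hom spaces.

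Your write-up has two slips and one substantive gap. First, Mackey restriction of $\In_{S^\circ_{n+2}}^{P_{n+2}}(\chi_\psi)$ to $H$ gives a \emph{filtration} indexed by the double cosets, not a product; you must invoke exactness of $\Psi^-$ to pass to subquotients (this holds, but should be said). Second, the appeal to (\ref{ass:psi}) is misplaced: that assumption normalizes the conductor of $\psi$ for later explicit computations, whereas the orbit analysis here uses only that $\psi$ is nontrivial. The genuine gap is precisely what you flag at the end. You must show that the character-matching condition on $\ac{U}_{n+1}U_{n+2}\cap wS^\circ_{n+2}w^{-1}$ singles out exactly one double coset \emph{and} that for this $w_0$ the $(\psi\ot\psi)$-coinvariants of the induced piece are $\Ib^n(\nu\chi)$, not merely nonzero. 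The paper's explicit conjugation by $\eta$ and the elementary-matrix tests above are exactly what make this go through; without them your argument remains a plan.
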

\nid
To show the last two lemmas, we use the distributional technique of loc. cit. for $l$-spaces $X$.
For a $\C$-vector space $V$, let $\Ss(X,V)$ denote the space of all Schwartz functions on $X$ with values in $V$.
Linear functionals $T$ on $\Ss(X,V)$ are called $V$-distributions on $X$.
Additionally, if $X$ is an $l$-group, and $T$ is right (resp. left) invariant under an open compact subgroup, we say $T$ is {\it locally constant on the right (resp. left)}.
\begin{prop}\label{prop:distprod}
Let $G$ be an $l$-group, and $V$ a vector space over $\C$.
Let $T$ be a $V$-distribution on $G$.
We have the followings:
\begin{enumerate}[i)]
\item If $T$ is right (resp. left) invariant, then $T$ is the product of a right (resp. left) Haar measure on $G$, and a linear functional on $V$.
\item If $T$ is locally constant on the  right (resp. left), then $T$ is the product of a right (resp. left) Haar measure on $G$, and a $V^*$-valued continuous function on $G$, where $V^*$ indicates the full-dual of $V$.
\end{enumerate}
\end{prop}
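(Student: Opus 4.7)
The plan is to reduce both assertions to the scalar case $V = \C$, which is classical for $l$-groups (see \cite{B-Z}), and then lift to the $V$-valued setting by a tensor-product identification.

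The foundational step is the $\C$-linear identification $\Ss(G, V) \cong \Ss(G) \otimes V$. Any locally constant, compactly supported $f : G \to V$ takes only finitely many values, because its compact support admits a finite partition into compact open pieces on which $f$ is constant, by total disconnectedness of $G$; hence $f$ decomposes as a finite sum $\sum_i \chi_{X_i} \otimes v_i$ with $\chi_{X_i} \in \Ss(G)$ and $v_i \in V$. Under this identification, a $V$-distribution $T$ is determined by the scalar distributions $T_v(f_0) := T(f_0 \otimes v)$, and the assignment $v \mapsto T_v$ is $\C$-linear.

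For (i), assume $T$ is right-invariant. Then each $T_v$ is a right-invariant scalar distribution, hence equals $\lambda(v) \cdot \mu_r$ for a uniquely determined $\lambda(v) \in \C$ by uniqueness of the right Haar measure $\mu_r$ on $G$. Linearity of $v \mapsto T_v$ forces $\lambda$ to lie in the algebraic dual $V^*$. Writing a general $f \in \Ss(G, V)$ as $\sum_i f_i \otimes v_i$ and using that the $V$-valued integral $\int_G f \, d\mu_r := \sum_i \bigl(\int_G f_i \, d\mu_r\bigr) v_i \in V$ is well-defined independently of the decomposition, one obtains $T(f) = \lambda\bigl(\int_G f \, d\mu_r\bigr)$, which is the desired factorization. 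The left-invariant case is symmetric.

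For (ii), one invokes the scalar analogue: if an open compact $K \subset G$ satisfies $T(R_k f) = T(f)$ for all $k \in K$ and all $f$, then each right-$K$-invariant scalar distribution $T_v$ is of the form $T_v(f_0) = \int_G \phi_v(g) f_0(g) \, d\mu_r(g)$ for some locally constant $\phi_v : G \to \C$. Setting $\phi(g)(v) := \phi_v(g)$ yields the required $\phi : G \to V^*$; linearity of $v \mapsto T_v$ implies $\phi(g) \in V^*$, and continuity of $\phi$ with respect to the weak-$*$ topology on $V^*$ (the only natural topology when $V$ carries no topology) is exactly local constancy of each $\phi_v$. The real content therefore rests entirely on the scalar versions in \cite{B-Z}; the main technical care is in verifying the tensor identification $\Ss(G, V) \cong \Ss(G) \otimes V$ and in checking that the scalar densities $\phi_v$ can be glued, via the prescribed linearity in $v$, into a single well-defined $V^*$-valued function on $G$ rather than a $v$-dependent family only well-defined up to scalars.
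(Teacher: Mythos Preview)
Your proof is correct and takes a genuinely different route from the paper. The paper argues part (i) directly: it fixes a fundamental system $\{N_\alpha\}$ of compact open subgroups, defines $v^*_\alpha \in V^*$ by $\langle v^*_\alpha, v\rangle = \vol(N_\alpha)^{-1} T(\Ch_{N_\alpha}\cdot v)$, checks using right invariance that $v^*_\alpha$ is independent of $\alpha$, and then verifies the integral formula on an arbitrary $\vp$ by writing it as a finite sum of right translates of such elementary functions. Part (ii) is then obtained by invoking the argument of Prop.~1.28 of \cite{B-Z}. Your approach instead passes through the tensor identification $\Ss(G,V)\simeq \Ss(G)\otimes V$ and reduces both parts to the known scalar theory, quoting uniqueness of Haar measure for (i) and the scalar density statement for (ii). This is conceptually cleaner and makes the reduction to the classical case transparent; the paper's version is more self-contained and constructive, producing $v^*$ explicitly rather than appealing to scalar uniqueness as a black box. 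One small point you should make explicit in (ii): the density $\phi_v$ is \emph{uniquely} determined by $T_v$ (test against characteristic functions of cosets $gK$), so the gluing $\phi(g)(v):=\phi_v(g)$ is unambiguous; and since all $\phi_v$ are constant on the \emph{same} right $K$-cosets, the resulting $\phi:G\to V^*$ is genuinely locally constant, not merely weak-$*$ continuous coordinatewise.
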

\begin{proof}
Suppose that $T$ is right invariant.
Let $\dr_r g$ denote a right Haar measure on $G$.
Let $\{ N_\alpha \mid \ \alpha \in \Af\}$ be a fundamental system of neghbourhoods of the identity consisting of open compact subgroups of $G$.
For $v \in V$, let $\vp^\alpha_v = \Ch(g; N_\alpha) v \in \Ss(G,V)$, where $\Ch$ indicates the characteristic function.
For each $\alpha \in \Af$, define $v_\alpha^* \in V^*$ by 
\begin{align*}
\langle v_\alpha^*, v \rangle = \vol(N_\alpha)^{-1} T(\vp^\alpha_v), \ v \in V.
\end{align*} 
Therefore $v^*_\alpha = v^*_\beta$ if $N_\alpha$ contains $N_\beta$, since $T(\vp^\alpha_v)$ equals $[N_\alpha:N_\beta]T(\vp^\beta_v)$ by the right invariance property of $T$.
This implies that $v_\alpha^*$ is independent from the choice of $\alpha$.
Since each $\vp \in \Ss(G,V)$ is right invariant under some $N_\alpha$, we may express $\vp$ by a finite sum of some right translations of $\vp_{v_i}^\alpha$ with some $v_i$'s.
By the right invariance property of $T$ again, we have
\begin{align*}
 T(\vp) &= T(\sum_i \vp_{v_i}^\alpha) = \sum_i\vol(N_\alpha) \langle v_\alpha^*, v_i \rangle \\
 &=\sum_i \int_G \langle v_\alpha^*, \vp^\alpha_{v_i}(g) \rangle  \dr_rg = \int_G \langle v_\alpha^*, \vp(g) \rangle  \dr_rg.
\end{align*} 
This is the former assertion.
Now, the latter one follows from the proof of Prop. 1.28. of loc. cit.
\end{proof}
\nid
Let $G$ be an $l$-group.
Let $Q_0$ and $U$ be its closed subgroups such that $Q_0 \cap U = \{1 \}$, and $Q_0$ normalizes $U$.
Set
\begin{align*}
Q = Q_0 U,
\end{align*}  
which is a closed subgroup of $G$.
Let $\xi:U \to \C^\tm$ be a continuous homomorphism stabilized by $Q_0$.
Let $H \subset G$ be a closed subgroup, and $\rho: H \to \C^\tm$ a continuous homomorphism.
Assume that
\begin{align}
QH = \{g \in G \mid \xi(h^g) = \rho(h) \ \  \mbox{for all $h \in H \cap U^{g^{-1}}$} \}. \label{eq:QH}
\end{align}
Observe the last set is left $Q$-, right $H$-invariant, closed, and an $l$-space in the induced topology.
Additionally, assume that 
\begin{align}
Q \cap H = (Q_0 \cap H)(U \cap H). \label{eq:QHst}
\end{align} 
\begin{prop}
With the above assumptions, for an arbitrary $\pi \in \Alg(Q_0)$, it holds that
\begin{align*}
\dim \Hom_H(\cI_Q^G(\pi \lt \xi),\rho) = \dim \Hom_{Q_0 \cap H}(\pi,  \Delta_{Q \cap H}\Delta_H^{-1} \rho),
\end{align*}
where $\Delta_{Q \cap H}, \Delta_H$ indicate the modular characters on the groups.
\end{prop}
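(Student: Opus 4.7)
The plan is to describe $\Hom_H(\cI_Q^G(\pi \lt \xi), \rho)$ as a space of $\pi^{*}$-valued distributions on $G$ bi-equivariant under $Q$ on the left and $H$ on the right, localize the support to the closed subset $QH$ using (\ref{eq:QH}), and evaluate the restriction on $QH$ as a functional on $\pi$ using (\ref{eq:QHst}). By Frobenius reciprocity for compact induction (c.f. \cite{B-Z}) combined with Proposition \ref{prop:distprod}, elements $\lambda$ of the Hom space correspond bijectively to $\pi^{*}$-valued distributions $T$ on $G$, locally constant on both sides, satisfying
\[
T(q g h) = \Delta_Q(q)^{-1}(\pi \lt \xi)(q)^{*}\Delta_H(h)\rho(h) T(g), \ \ q \in Q,\ h \in H,
\]
at the level of test functions.

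Next, I show $\supp T \subseteq QH$. For $g \notin QH$, (\ref{eq:QH}) furnishes an element $h_0 \in H \cap U^{g^{-1}}$ with $\xi(h_0^g) \neq \rho(h_0)$. The identity $(h_0^g)\, g = g\, h_0$, combined with the left-equivariance under $h_0^g \in U \subset Q$ (which contributes the factor $\xi(h_0^g)$, with $\Delta_Q(h_0^g) = 1$ since $h_0^g$ is unipotent and $U$ is unimodular) and the right-equivariance under $h_0$ (which contributes $\rho(h_0)$), forces $(\xi(h_0^g) - \rho(h_0))T = 0$ on a small neighborhood of $g$, hence $T$ vanishes there. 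Since $QH$ is closed, $\supp T \subseteq QH$. Now $QH$ is a homogeneous $l$-space for the left $Q$-, right $H$-action with stabilizer of $1$ equal to $Q \cap H$ (antidiagonally embedded), so $QH \simeq (Q \times H)/(Q \cap H)$. Applying Proposition \ref{prop:distprod} on $Q \times H$ to distributions supported on this single orbit, the space of bi-equivariant $\pi^{*}$-valued distributions on $QH$ is identified with the space of linear functionals $\ell$ on $\pi$ satisfying a $(Q \cap H)$-equivariance. By (\ref{eq:QHst}), $Q \cap H = (Q_0 \cap H)(U \cap H)$; taking $g = 1 \in QH$ in (\ref{eq:QH}) gives $\xi|_{U \cap H} = \rho|_{U \cap H}$, so the $(U \cap H)$-part of the equivariance is automatic, while the $(Q_0 \cap H)$-part, after tracking the Jacobians of the $(Q \times H)$-action on $QH$, reduces precisely to $\ell \circ \pi(x) = \Delta_{Q \cap H}(x)\Delta_H(x)^{-1}\rho(x)\,\ell$ for $x \in Q_0 \cap H$, yielding the claimed equality of dimensions.

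The main technical obstacle is the modular-character bookkeeping: one must carefully align the left/right Haar measures on $G$, $Q$, $H$, and $Q \cap H$, and verify that the Jacobians produced by Proposition \ref{prop:distprod} in the final step combine to give exactly the twist $\Delta_{Q \cap H}\Delta_H^{-1}$ appearing in the statement, as opposed to some other product of modular characters. The geometric skeleton — support concentration on $QH$, single $(Q,H)$-orbit, stabilizer $Q \cap H$ factoring as $(Q_0 \cap H)(U \cap H)$ — is short and standard once these factors are verified; the real labor lies entirely in tracking those modular factors through each Mackey-type manipulation.
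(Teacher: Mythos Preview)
Your proposal is correct and follows essentially the same approach as the paper: identify $\Hom_H(\cI_Q^G(\pi\lt\xi),\rho)$ with equivariant $V_\pi$-distributions on $G$, use (\ref{eq:QH}) to force the support into $QH$, pull back to $Q\times H$, apply Proposition~\ref{prop:distprod} to extract a functional, and then use (\ref{eq:QHst}) to reduce to $Q_0\cap H$. The paper carries out explicitly the step you flag as ``the real labor'': it writes down the averaging map $\varphi\mapsto\ol{\varphi}$ from $\Ss(Q\times H,V_\pi)$ to $\Ss(QH,V_\pi)$, computes $\ol{R(q',h')\varphi}$ and $\ol{L(b,b)\varphi}$ directly, and reads off the twist $\Delta_{Q\cap H}\Delta_H^{-1}$ from the second computation --- so where you appeal to ``tracking the Jacobians'', the paper simply does the two change-of-variable calculations by hand.
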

\begin{proof}
Our proof is a modification of that for Prop. 1 of \cite{K}.
Abbreviate $\pi \lt \xi$ and $\Delta_{Q \cap H}\Delta_H^{-1}$ as $\pi_\xi$ and $\Delta$, respectively.
We will construct a linear map from the $\C$-space $\Hom_H(\cI_Q^G(\pi_\xi),\rho)$ to $\Hom_{Q_0 \cap H}(\pi, \Delta \rho)$.
Denote the representation space of $\pi$ by $V_\pi$, on which $Q$ also acts by $\pi_\xi$.
For $\phi \in \Ss(G, V_\pi)$, define $f_\phi \in \cI_Q^G(\pi_\xi)$ by 
\begin{align*}
f_\phi(g) = \int_{Q} \pi_\xi(q^{-1})\phi(qg )\dr_rq.
\end{align*} 
The linear map $\phi \mapsto f_\phi$ is bijective.
For $\mu \in \Hom_H(\cI_Q^G(\pi_\xi),\rho)$, define a $V_\pi$-distribution $T_\mu$ on $G$ by $T_\mu(\phi) = \mu(f_\phi)$.
Observe that $T = T_\mu$ satisfies:  
\begin{align}
\begin{split}
T \circ R(h) &= \rho(h) T \ \ (h \in H), \\
T \circ L(q) &= \Delta_{Q}(q) T \circ \pi_\xi(q^{-1}) \ \ (q \in Q).
\end{split}\label{eq:Vdist}
\end{align}  
Consider the double coset space $Q \bs G / H$.
By (\ref{eq:QH}), if $g$ does not lie in $QH$, then it holds that $\rho(h) \neq \xi(h^g)$ for some $h \in H$, and $T(g)$ is zero by (\ref{eq:Vdist}). 
Therefore, the support of $T$ is contained in $QH$, and we may regard $T$ as a $V_\pi$-distribution on the closed subset $QH$ (an $l$-space by (\ref{eq:QH})).

Let $\vp \in \Ss(Q \tm H, V_\pi)$.
Define $\ol{\vp} \in \Ss(QH, V_\pi)$ by 
\begin{align*}
\ol{\vp}(q^{-1}h) = \int_{Q \cap H} \Delta_Q(aq) \pi_\xi((aq)^{-1})\rho(ah) \vp(aq, a h) \dr_ra.
\end{align*} 
The linear map $\vp \mapsto \ol{\vp}$ is bijective.
Therefore any $V_\pi$-distribution $T'$ on $Q \tm H$ is derived from a $V_\pi$-distribution $T$ on $QH$ satisfying (\ref{eq:Vdist}) by setting $T'(\vp) = T(\ol{\vp})$.
For $q' \in Q, h' \in H$, we compute
\begin{align*}
&\ol{R(q',h') \vp}(q^{-1}h) 
= \int_{Q \cap H} \Delta_{Q}(aq) \pi_\xi((aq)^{-1})\rho(ah) \vp(aqq', a hh') \dr_ra \\
&= \frac{\pi_\xi(q')}{\Delta_{Q}(q')\rho(h')}\int_{Q \cap H} \Delta_{Q}(aqq') \pi_\xi((aqq')^{-1})\rho(ahh') \vp(aqq', a hh') \dr_ra \\
&= \frac{\pi_\xi(q')}{\Delta_{Q}(q')\rho(h')} L(q')R(h')\ol{\vp}(q^{-1}h). 
\end{align*} 
Therefore $T'$ is right invariant by (\ref{eq:Vdist}).
By Prop. \ref{prop:distprod}, there exists a linear functional $\mu'$ on $V_\pi$ such that 
\begin{align}
T'(\vp) = \int _H \int_{Q} \langle \mu', \vp(q,h) \rangle \dr_rq \dr_rh. \label{eq:mu'}
\end{align} 
For $b \in Q \cap H$, we compute
\begin{align*}
& \ol{L(b,b)\vp}(q^{-1}h) = \int_{Q \cap H} \Delta_Q(aq) \pi_\xi((aq)^{-1})\rho(ah) \vp(b^{-1}aq, b^{-1}a h) \dr_ra \\
&= \Delta_{Q \cap H}(b) \int_{Q \cap H} \Delta_Q(baq)\pi_\xi((aq)^{-1}b^{-1})\rho(bah) \vp(aq, a h) \dr_ra \\
&= \Delta_{Q \cap H}(b) \Delta_Q(b) \rho(b)\ol{\pi_\xi(b^{-1})\vp}(q^{-1}h).
\end{align*} 
Thus 
\begin{align*}
T' \circ L(b,b) = \Delta_{Q \cap H}(b) \Delta_Q(b) \rho(b) T' \circ \pi_\xi(b^{-1}).
\end{align*} 
Now from (\ref{eq:mu'}), 
\begin{align*}
\Delta_H(b) \langle \mu', \vp(q,h) \rangle = \Delta_{Q \cap H}(b) \rho(b) \langle \mu', \pi_\xi(b^{-1})\vp(q,h) \rangle.
\end{align*}  
This means that $\mu'$ lies in $\Hom_{Q \cap H}(\pi_\xi, \Delta \rho)$. 
By the restriction to $Q_0$, we obtain a $\mu'' \in \Hom_{Q_0 \cap H}(\pi, \Delta\rho)$. 
The linear map $\mu' \mapsto \mu''$ is bijective by (\ref{eq:QHst}).
We have constructed the desired map $\mu \mapsto \mu''$.
One can reverse the above steps, and easily find that this map is bijective.
\end{proof}
\begin{rem}
The spaces $\Hom_H(\cI_Q^G(\pi_\xi),\rho)$ and $\Hom_{Q \cap H}(\pi_\xi, \Delta \rho)$ are both zero, if $\Delta|_{U \cap H} \not\equiv \1$.
\end{rem}
Let $n = 2m$ be an even integer.
By the Frobenius duality, 
\begin{align*}
\Hom_{P_{n+2}}(\tau_{n+2}, \Ib^{n+2}(\chi)) &\simeq \Hom_{S_{n+2}^\circ}(\tau_{n+2}, \chi_\psi), \\
\Hom_{P_{n}}(\tau, \Ib^{n}(\nu\chi)) &\simeq \Hom_{S_{n}^\circ}(\tau, \nu\chi_\psi).
\end{align*} 
We will prove Lemma \ref{lem:delta2} by showing the spaces in the right hands are same dimensional.
By the previous proposition, it suffices to check (\ref{eq:QH}), (\ref{eq:QHst}) under the situation considered.
Set $G = P_{n+2}$.
Let $h_{n+1} \in \Sf_{n+1}$ be the permutation 
\begin{align}
\left(\begin{array}{cccccccc}
1& \cdots & m & m+1 & m+2  & m+3 & \cdots & n+1 \\
1 & \cdots & m & 2m+1 & m+1 & m+2 & \cdots & n 
\end{array}\right). \label{def:hn+1}
\end{align} 
Abbreviate
\begin{align}
\eta = \ac{h}_{n+1} \in P_{n+2}. \label{def:h}
\end{align} 
Set $H = \eta S_{n+2}^{\circ}\eta^{-1}$.
Let $\rho: H \to \C^\tm$ be the homomorphism: 
\begin{align*}
h \longmapsto \chi_\psi(\eta^{-1}h\eta ).
\end{align*}
If we write a typical $s \in S_{n+2}^\circ$ as
\begin{align*}
\begin{bmatrix}
A & {}^t \alpha & X &{}^t \beta  \\
& 1 & \gamma &  y \\
& &A & {}^t \alpha  \\
& & & 1 \\
\end{bmatrix},
\end{align*} 
where $A,X$ are $m \tm m$ matrices, $\alpha, \beta, \gamma$ are $m$-dimensional row vectors, and $y$ is an element of $F$, then 
\begin{align*}
s^\eta &= \begin{bmatrix}
A & X & {}^t\alpha & {}^t\beta \\
& A & & {}^t\alpha \\
& \gamma &1 & y \\
& & & 1 \\
\end{bmatrix} \\
\rho(s^\eta) &= \chi \circ \det(A)\psi(y + \tr(X)).
\end{align*} 
Set $Q_0 = \ac{P}_{n} \subset G_{n+2}$, and $U = U_{n+1}U_{n+2}$,
Set $\xi = \psi|_U$. 
Observe that
\begin{itemize}
\item $\tau_{n+2} = \cI_Q^G(\tau \lt \xi)$.
\item $\ac{S}_{n}^\circ = H \cap Q_0$.
\item $\rho(\ac{t}) = \chi_\psi(t)$ for $t \in S_{n}$.
\item $\Delta_{Q \cap H}\Delta_H^{-1}(\ac{t})=|\det(A)|$ for 
$t = \begin{bmatrix}
A & * \\
& A  
\end{bmatrix} \in S_{n}$.
\end{itemize}
It is easy to check (\ref{eq:QHst}).
For (\ref{eq:QH}), it suffices to see that any matrix $p$ in the RHS of (\ref{eq:QH}) satisfies: 
\begin{align*}
p_{n,1} = \cdots = p_{n, m} = p_{n+1,1} = \cdots = p_{n+1, m} = 0.
\end{align*} 
Let $E_{i,j}$ denote the $i$-th row and $j$-th column matrix unit.
If $p_{n+1,i} \neq 0$ for $i \in \{1, \ldots, m\}$, then for some $h = 1_{n+2} + x E_{i,n+2} \in H$,  
\begin{align*}
\rho(h) = 1 \neq \psi(x p_{n+1,i}) = \xi(h^p).
\end{align*} 
Hence, $p_{n+1,1} = \cdots = p_{n+1, m} = 0$.
Now we may assume that $p$ lies in $\ac{P}_{n+1}U_{n+2}$, since the RHS of (\ref{eq:QH}) is right $H$-invariant.
If $p_{n,i} \neq 0$ for $i \in \{1, \ldots, m\}$, then for some $h'= 1_{n+2} + y E_{i,n+1} + yE_{i+ m,n+2} \in H$, 
\begin{align*}
\rho(h') = 1 \neq \psi(y p_{n,i}) = \xi(h'^p).
\end{align*} 
Hence, $p_{n,1} = \cdots = p_{n, m} = 0$.
Now (\ref{eq:QH}) is checked, and the proof of the lemma is completed.
The proof of Lemma \ref{lem:inGchi} is similar and omitted.

Similar to Prop. \ref{prop:redG}, the following holds.
\begin{prop}\label{prop:red0}
For an arbitrary $\chi$, $\psi_{n}$ has a unique up to constant multiples pre-Shalika model relevant to $\chi$. 
\end{prop}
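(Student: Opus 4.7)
The plan is to mirror the inductive structure of Proposition~\ref{prop:redG}, climbing from $n=2$ upward in steps of $2$ via Lemma~\ref{lem:delta2}. The crucial preliminary I would establish is the identification $(\psi_n)_{n+2} = \psi_{n+2}$, i.e.\ that the lift of $\psi_n$ from $P_n$ to $P_{n+2}$ coincides with the compact induction $\cI_{N_{n+2}}^{P_{n+2}}(\psi)$ defining $\psi_{n+2}$. It suffices to show $\psi_r = \Psi(\psi_{r-1})$ for every $r\ge 2$, which I would deduce from transitivity of compact induction along the chain $N_r \subset \ac{N}_{r-1}U_r \subset \ac{P}_{r-1}U_r \subset P_r$, noting that $N_r = \ac{N}_{r-1}U_r$ and that $\ac{P}_{r-1}U_r$ is the semidirect product $\ac{P}_{r-1}\ltimes U_r$; these facts make the inner compact induction collapse to $\psi_{r-1}\lt\psi|_{U_r}$, matching the definition of $\Psi$.

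With that identification in hand, Lemma~\ref{lem:delta2} applied to $\tau = \psi_n$ yields
\[
\dim \Hom_{P_{n+2}}(\psi_{n+2}, \Ib^{n+2}(\chi)) = \dim \Hom_{P_n}(\psi_n, \Ib^n(\nu\chi)).
\]
Since $\nu\chi$ ranges over all continuous characters of $F^\tm$ as $\chi$ does, induction on even $n$ reduces the claim to the case $n=2$.

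In that base case, $S_2^\circ = N_2$ and $\chi_\psi|_{N_2} = \psi$ independently of $\chi$, so Frobenius reciprocity for smooth induction gives
\[
\Hom_{P_2}(\psi_2, \In_{N_2}^{P_2}(\psi)) \simeq \Hom_{N_2}(\psi_2|_{N_2}, \psi).
\]
Using the decomposition $P_2/N_2 \simeq F^\tm$ via the representatives $\diag(a,1)$, I would realize $\psi_2|_{N_2}$ as $\Ss(F^\tm,\C)$ on which the element $u(x) \in N_2$ acts on $F\in \Ss(F^\tm,\C)$ by $(u(x)\cdot F)(a) = \psi(ax) F(a)$. An intertwiner is then a distribution $\mu$ on $F^\tm$ satisfying $\mu(a\mapsto \psi(ax)F(a)) = \psi(x) \mu(F)$ for all $x \in F$; this forces $\mu$ to be supported at $a = 1$, and since $F^\tm$ is totally disconnected, $\mu$ must be a scalar multiple of $\delta_1$, yielding a one-dimensional space.

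The only step I expect to require real care is verifying $(\psi_n)_{n+2} = \psi_{n+2}$; once that bookkeeping is in place, the induction and the base case go through exactly in parallel with Proposition~\ref{prop:redG}.
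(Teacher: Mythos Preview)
Your proposal is correct and follows essentially the same route as the paper: the paper also invokes $\psi_n = \Psi^{n-2}(\psi_2)$ (which you spell out via transitivity of compact induction), reduces via Lemma~\ref{lem:delta2} to $n=2$, and then handles the base case by observing that the relevant distributions on $P_2$, analyzed through the double coset representatives $\ac{t}$ with $t\in F^\tm$, are supported only on $N_2$ --- exactly your $\delta_1$ computation phrased in the distributional language of the preceding proposition.
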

\begin{proof}
Since $\psi_{n}$ equals $\Psi^{n-2}(\psi_2)$,  the assertion is reduced to the evaluation of the dimension of $\Hom_{P_{2}}(\psi_2, \Ib^{2}(\chi))$ by Lemma \ref{lem:delta2}. 
Consider the space of corresponding $\C$-distributions on $P_2$, and the corresponding double coset space $N_2 \bs P_2 /N_2$.
As a complete system of its representatives, using $\{\ac{t} \in P_2 \mid t \in F^\tm \}$, we find that all supports of the distributions are contained in $N_2$.
\end{proof}
\nid
Since any irreducible smooth representation of $P_n$ is equivalent to $\psi_n$, or a lift from $\Ir(G_{r}), r < n$ (c.f. \cite{B-Z}), we obtain:
\begin{thm}\label{thm:1dimprtau}
An irreducible smooth representation of $P_{n}$ has no or a unique up to constant multiples pre-Shalika model relevant to $\chi$.
\end{thm}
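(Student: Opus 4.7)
The plan is to combine the Bernstein--Zelevinsky classification of $\Ir(P_n)$ cited just before the theorem with the propositions already established in this section. By that classification, every $\tau \in \Ir(P_n)$ is isomorphic either to $\psi_n$, or to the lift $\tau_n' = (\tau')_n$ of some $\tau' \in \Ir(G_r)$ with $r < n$. It therefore suffices to bound $\dim\Hom_{P_n}(\tau,\Ib^n(\chi))$ by $1$ in each of these cases separately.

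First I would dispose of the case $\tau \simeq \psi_n$: the uniqueness is exactly the content of Proposition \ref{prop:red0}, so there is nothing to do. Next, for $\tau \simeq \tau_n'$ with $\tau' \in \Ir(G_r)$ and $r$ odd, Proposition \ref{prop:redG}(i) immediately gives $\dim\Hom_{P_n}(\tau,\Ib^n(\chi)) = 0$. Finally, for $\tau \simeq \tau_n'$ with $\tau' \in \Ir(G_r)$ and $r$ even (so $n - r$ is even and positive), Proposition \ref{prop:redG}(ii) yields
\begin{align*}
\dim\Hom_{P_n}(\tau_n', \Ib^n(\chi)) = \dim\Hom_{G_r}\bigl(\tau',\, \In_{S_r}^{G_r}(\nu^{(n-r)/2}\chi_\psi)\bigr),
\end{align*}
and the right-hand side is at most one-dimensional by the uniqueness theorem of Chen--Sun \cite{C-S} applied to the irreducible representation $\tau'$ of $G_r$ with the twisted Shalika character defined by $\nu^{(n-r)/2}\chi$.

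The only point requiring a small check is that the case distinction from the classification matches the hypotheses of the two reduction propositions: $\psi_n$ is the unique "cuspidal" or "non-lifted" irreducible of $P_n$ in the Bernstein--Zelevinsky picture, while every other irreducible arises as $\Psi^a \Upsilon(\tau')$ for some $a \ge 0$ and $\tau' \in \Ir(G_r)$ with $r < n$, which is precisely the lift $\tau_n'$ defined before Proposition \ref{prop:redG}. I do not anticipate a real obstacle here; the whole argument is bookkeeping once Propositions \ref{prop:redG} and \ref{prop:red0} are in hand. The substantive work has already been carried out in those propositions via the distribution-theoretic machinery of Proposition \ref{prop:distprod} and Lemmas \ref{lem:inGchi}, \ref{lem:delta2}, so the present theorem is essentially a formal corollary.
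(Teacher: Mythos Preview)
Your proposal is correct and follows essentially the same approach as the paper. The paper states the theorem as an immediate corollary of the Bernstein--Zelevinsky classification together with Propositions \ref{prop:redG} and \ref{prop:red0} (with the Chen--Sun uniqueness \cite{C-S} implicitly invoked through Proposition \ref{prop:redG}(ii)), and your write-up simply makes this case analysis explicit.
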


Now let $(\pi,V) \in \Ir(G_{n})$ be generic.
There exists a Jordan-H\"{o}lder sequence of smooth $P_{n}$-modules $V_l \subset \cdots \subset V_0 = V$ with the following properties (c.f. \cite{B-Z}):
\begin{itemize}
\item $V_l$ is equivalent to $\psi_n$.
\item each $V_i/V_{i + 1}$ is equivalent to some lift from $\Ir(G_r), r < n$.
\item $\pi$ is supercuspidal, if and only if $V = V_l$.
\end{itemize}
Assume that 
\begin{align}
\mbox{$V_l$ contains a Shalika form relevant to $\chi$ not vanishing at $1_n$}. \label{ass:J1nz}
\end{align} 
Of course, if $\pi$ is supercuspidal, then this condition is empty.
\begin{prop}\label{prop:SPS}
Under (\ref{ass:J1nz}), for any pre-Shalika form of $\psi_n$ relevant to $\chi$, there exists a Shalika form in $V_l$ whose restriction to $P_{n}$ coincides with it.
\end{prop}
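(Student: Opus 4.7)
The plan is to observe that the restriction map $J \mapsto J|_{P_n}$ gives a nonzero $P_n$-equivariant linear map from $V_l$ into $\In_{S_n^\circ}^{P_n}(\chi_\psi)$, and then to invoke the uniqueness result (Proposition \ref{prop:red0}) to conclude that its image is exactly $\Ib_{\psi_n}(\chi)$, from which the statement follows immediately.

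First I would verify that $\mathrm{Res}: V_l \to \In_{S_n^\circ}^{P_n}(\chi_\psi)$, $J \mapsto J|_{P_n}$, is well defined. Since $V_l \subset V \subset \Sb_\pi(\chi)$, every $J \in V_l$ satisfies (\ref{eq:propSh}), and in particular $J(sg) = \chi_\psi(s) J(g)$ for $s \in S_n^\circ = S_n \cap P_n$ and $g \in P_n$, which is exactly the defining condition of the induced space. The map $\mathrm{Res}$ is $P_n$-equivariant because $\pi(p)$ acts by right translation for $p \in P_n$, and right translation preserves both $V_l$ (as a $P_n$-submodule) and the induced space.

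Next, I would show $\mathrm{Res}$ is nonzero using (\ref{ass:J1nz}): by hypothesis there exists $J_0 \in V_l$ with $J_0(1_n) \neq 0$, and since $1_n \in P_n$ this means $\mathrm{Res}(J_0)(1_n) = J_0(1_n) \neq 0$. Identifying $V_l$ with $\psi_n$ as $P_n$-modules, $\mathrm{Res}$ becomes a nonzero element of $\Hom_{P_n}(\psi_n, \In_{S_n^\circ}^{P_n}(\chi_\psi))$. By Proposition \ref{prop:red0} this space is one-dimensional, so $\mathrm{Res}$ is a nonzero scalar multiple of any fixed pre-Shalika intertwiner; hence its image coincides with $\Ib_{\psi_n}(\chi)$. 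Therefore, for every pre-Shalika form $\lambda \in \Ib_{\psi_n}(\chi)$ there exists $J \in V_l$ with $J|_{P_n} = \lambda$, as required.

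The argument is essentially a direct application of the uniqueness theorem once the restriction map is in place, so I do not anticipate a serious obstacle. The only delicate point is the necessity of (\ref{ass:J1nz}): without it, there is no \emph{a priori} way to exclude the possibility that every $J \in V_l$ vanishes identically on $P_n$, in which case $\mathrm{Res}$ would be the zero map and the conclusion would fail for every nonzero pre-Shalika form. In the supercuspidal case one has $V = V_l$, so (\ref{ass:J1nz}) is automatic from the existence of a nonzero Shalika form not vanishing at $1_n$ (which itself can be arranged by a translation), and this is why the hypothesis is declared empty there.
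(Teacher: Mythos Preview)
Your proof is correct and follows essentially the same approach as the paper's own proof: both use assumption (\ref{ass:J1nz}) to ensure that the restriction map $V_l \to \In_{S_n^\circ}^{P_n}(\chi_\psi)$ is nonzero, and then appeal to the one-dimensionality of $\Hom_{P_n}(\psi_n,\Ib^n(\chi))$ (equivalently, the irreducibility of $V_l\simeq\psi_n\simeq\Ib_{\psi_n}(\chi)$) to conclude that the image is all of $\Ib_{\psi_n}(\chi)$. Your write-up is simply more explicit than the paper's two-line version.
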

\begin{proof}
By (\ref{ass:J1nz}), for a Shalika form in $V_l$, its restriction to $P_n$ is nontrivial, and can be regarded as a pre-Shalika form of $\psi_n$.
The assertion follows from the irreducibility of $V_l \simeq \psi_n \simeq \Ib_{\psi_n}(\chi)$.
\end{proof}
\begin{prop}\label{prop:asaV}
Assume that all $V_i/V_{i+1}$ has no pre-Shalika model relevant to $\chi$.
Then, (\ref{ass:J1nz}) is satisfied.
Further, for a Shalika form $J$, the followings are equivalent.
\begin{enumerate}[i)]
\item $J$ does not vanish on $P_n$.
\item $J$ lies in $V_l$.
\end{enumerate}
In this case, $J|_{P_n}$ coinsides with a pre-Shalika form of $\psi_n$.
\end{prop}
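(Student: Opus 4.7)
The plan is to study the $P_n$-equivariant restriction map $r : V \to \Ib^n(\chi)$, $J \mapsto J|_{P_n}$. First, $r$ is nonzero: for any nonzero $J \in V$ pick $g_0 \in G_n$ with $J(g_0) \neq 0$; then $\pi(g_0) J$ attains the value $J(g_0) \neq 0$ at $1_n$, hence does not vanish on $P_n$. Second, by left-exactness of $\Hom_{P_n}(-, \Ib^n(\chi))$ applied inductively along the composition series, the hypothesis $\Hom_{P_n}(V_i/V_{i+1}, \Ib^n(\chi)) = 0$ for $i < l$ propagates to $\Hom_{P_n}(V/V_l, \Ib^n(\chi)) = 0$. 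The exact sequence $0 \to V_l \to V \to V/V_l \to 0$ then gives $\Hom_{P_n}(V, \Ib^n(\chi)) \hookrightarrow \Hom_{P_n}(V_l, \Ib^n(\chi)) = \C$ by Proposition \ref{prop:red0}, so $\Hom_{P_n}(V, \Ib^n(\chi)) = \C \cdot r$ and $r|_{V_l}$ is nonzero; by simplicity of $V_l \simeq \psi_n$ it is injective.

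From this (\ref{ass:J1nz}) follows: picking $J \in V_l$ with $r(J) \neq 0$, there is $p \in P_n$ with $J(p) \neq 0$, and $\pi(p) J \in V_l$ satisfies $(\pi(p) J)(1_n) = J(p) \neq 0$. Proposition \ref{prop:SPS} then applies and gives the surjectivity of $r|_{V_l}$ onto the space $\Ib_{\psi_n}(\chi)$ of pre-Shalika forms of $\psi_n$ inside $\Ib^n(\chi)$, so $r|_{V_l} : V_l \xrightarrow{\sim} \Ib_{\psi_n}(\chi)$. From this isomorphism, the direction (ii)$\Rightarrow$(i) together with the final assertion that $J|_{P_n}$ is a pre-Shalika form of $\psi_n$ are immediate for nonzero $J \in V_l$.

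The main obstacle is the direction (i)$\Rightarrow$(ii). Given $J \in V$ with $r(J) \neq 0$, consider the $P_n$-submodule $W \subseteq V$ generated by $J$. Since $r|_W \neq 0$, the Hom-vanishing argument rules out having all composition factors of $W$ among $\{V_i/V_{i+1}\}_{i<l}$; hence $\psi_n$ must appear as a composition factor of $W$, and simplicity of $V_l$ then forces $V_l \subseteq W$. The crux is to conclude $W = V_l$, equivalently $\ker(r) = 0$: otherwise $J' + J_0$ with $J' \in V_l \setminus \{0\}$ and $J_0 \in \ker(r) \setminus V_l$ would lie outside $V_l$ with $r(J' + J_0) = r(J') \neq 0$, violating (i)$\Rightarrow$(ii). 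Establishing $\ker(r) = 0$ requires more than the formal Hom machinery and should come from a Kirillov-type injectivity argument: the Shalika transformation $J(sg) = \chi_\psi(s) J(g)$ together with the $P_n$-action ought to determine $J$ from $J|_{P_n}$, giving the required injectivity of $r$.
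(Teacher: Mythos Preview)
Your argument for assumption (\ref{ass:J1nz}), for (ii)$\Rightarrow$(i), and for the final sentence is correct. Your route via left-exactness of $\Hom_{P_n}(-,\Ib^n(\chi))$ is more categorical than the paper's direct argument (pick the minimal $V_r$ containing a form not vanishing at $1_n$; if $r<l$ then $V_{r+1}=V_{r+1}^0$, so restriction factors through $V_r/V_{r+1}$ and yields a pre-Shalika model of that quotient, contradiction), but both reach the same conclusion that $r|_{V_l}$ is nonzero, hence injective by simplicity of $V_l\simeq\psi_n$.

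Your worry about (i)$\Rightarrow$(ii) is well-founded, but your diagnosis is slightly off. You assert this direction is equivalent to $\ker(r)=0$; however $\ker(r)=0$ does not rescue it. If $V\neq V_l$ and $\ker(r)=0$, then any $J\in V\setminus V_l$ already satisfies $r(J)\neq 0$ while $J\notin V_l$, so (i)$\Rightarrow$(ii) fails outright. Combined with your own $J'+J_0$ counterexample for the case $\ker(r)\not\subset V_l$, this shows that the literal equivalence (i)$\Leftrightarrow$(ii) for all nonzero $J$ forces $V=V_l$, i.e.\ it can only hold in the supercuspidal case. The paper's proof is just as terse here (``This argument also implies the equivalence of i), ii)''), and does not supply the missing step either; so you have in effect exposed an imprecision in the stated proposition rather than a gap in your own reasoning. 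Fortunately, what is actually invoked downstream (cf.\ Theorem~\ref{thm:Ln} and the remark after Theorem~\ref{thm:main}) is only the first assertion together with (ii)$\Rightarrow$(i) and the last sentence, all of which your argument establishes cleanly. Your suggested ``Kirillov-type injectivity'' would give $\ker(r)=0$ (via the fact that $V_l$ is the $P_n$-socle of a generic $\pi$), but as noted this still would not yield (i)$\Rightarrow$(ii) when $V\neq V_l$.
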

\begin{proof}
Let $V_i^0$ be the $P_n$-submodule $\{J \in V_i \mid J|_{P} \equiv 0 \} \subset V_i$.
Take the minimal $V_r$ containing a Shalika form which does not vanish at the identity.
By definition, if $r \neq l$, then we have $V_{r+1} = V_{r+1}^0 \subset V_r^0$ and $V_r/V_{r+1}$ has a pre-Shalika model, conflicting with the assumption.
Hence the first assertion. 
This argument also implies the equivalence of i), ii).
The last assertion follows from the proof of Prop. \ref{prop:SPS}.
\end{proof}
\nid
We will give an easy sufficient condition for the lack of Shalika models.
\begin{lem}\label{lem:notauram}
If $\tau \in \Ir(G_r)$ is ramified, then any lift of $\tau$ has no nontrivial $P_{r}(\oo)$-invariant vector.
\end{lem}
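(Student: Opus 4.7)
The plan is to realize $\tau_l = \Psi^{l-r-1}\Upsilon(\tau)$ as a compact induction, and then use an Iwasawa-type decomposition of $P_l$ to reduce the question to $K_r$-invariants in $V_\tau$. Unwinding the iterated $\Psi$ and invoking transitivity of compact induction, one identifies $\tau_l$ with $\cI_{M}^{P_l}(\sigma)$, where $M \subset P_l$ is the subgroup whose members have upper-left $r \times r$ block in $G_r$, zero below that block, and are upper triangular with $1$'s on the diagonal in the remaining lower-right part; and $\sigma(m) = \tau(g)\prod_{i=r+1}^{l-1}\psi(m_{i,i+1})$, where $g$ denotes the $G_r$-block of $m$.

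Set $T = \{\diag(1_r, t_{r+1}, \ldots, t_{l-1}, 1) : t_i \in F^\tm\} \subset P_l$. The torus $T$ normalizes $M$, and applying the Iwasawa decomposition to the upper-left $(l-1) \times (l-1)$ block of $P_l$ gives $P_l = M T P_l(\oo)$. Hence any $f \in \tau_l^{P_l(\oo)}$ is determined by its restriction to $T$, via $f(mtk) = \sigma(m) f(t)$ for $m \in M$, $t \in T$, $k \in P_l(\oo)$.

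Fix $t \in T$. A direct matrix computation shows that a $k \in P_l(\oo)$ satisfies $tkt^{-1} \in M$ if and only if $k_{ii} = 1$ for $r < i < l$ and $k_{ij} = 0$ for $r < i < l$, $j < i$; combining this with $k \in K_l$ forces $k|_{r \times r} \in K_r$, and any $k_0 \in K_r$ is realizable in this way. Imposing additionally $k_{i,i+1} = 0$ for $r < i < l$ makes $\sigma(tkt^{-1}) = \tau(k_0)$, using $\psi(\oo) = \{1\}$. The $P_l(\oo)$-invariance $f(tk) = f(t)$ together with the $M$-semi-invariance $f(tk) = \sigma(tkt^{-1}) f(t) = \tau(k_0) f(t)$ then forces $\tau(k_0) f(t) = f(t)$ for every $k_0 \in K_r$, so $f(t) \in V_\tau^{K_r} = 0$ by the ramification hypothesis. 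Hence $f(t) = 0$ for all $t \in T$, and $f \equiv 0$. The main technical hurdle is the first paragraph --- the iterated-compact-induction realization of $\tau_l$ --- while the remaining calculation with conjugation by $T$ is a straightforward matrix manipulation.
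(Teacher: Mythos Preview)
Your proof is correct and follows essentially the same approach as the paper's. Both arguments realize $\tau_l$ via iterated compact induction, use an Iwasawa-type decomposition $P_l = M T P_l(\oo)$ (the paper writes this as $P_l \subset N_l \ac{G}_r T P_l(\oo)$) to reduce a $P_l(\oo)$-fixed vector to its values on $T$, and then observe that conjugating $\ac{K}_r \subset P_l(\oo)$ past $t \in T$ forces $f(t) \in V_\tau^{K_r} = 0$; your write-up simply spells out in more detail what the paper compresses into ``by the definition of $\tau_l$'' and ``the restriction to $\ac{G}_r$ of the right translation of $f$ by $t$ is $K_r$-invariant''. One small remark: once you impose $k_{i,i+1}=0$ you get $\sigma(tkt^{-1})=\tau(k_0)$ directly from $\psi(0)=1$, so the appeal to $\psi(\oo)=\{1\}$ is unnecessary there.
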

\begin{proof}
Let $f \in \tau_l$ with $l > r$ be $P_{l}(\oo)$-invariant.
Let 
\begin{align*}
T = \{\diag(t_1, \ldots, t_l) \mid t_{1} = \cdots = t_{r} = 1 \} \subset D_l.
\end{align*}
By the inclusive relation $P_l \subset N_l \ac{G}_r T  P_{l}(\oo)$, and the definition of $\tau_l$, it suffices to show that $f$ vanishes on $\ac{G}_r T$.
The restriction to $\ac{G}_r$ of the right translation of $f$ by $t \in T$ is $K_r$-invariant.
However, it is identically zero since $\tau$ is ramified.
\end{proof}
\begin{lem}\label{lem:unrnoS}
If $\chi$ is ramified, then any unramified $\pi \in \Ir(G_{n})$ with $n$ even has no Shalika model relevant to $\chi$.
\end{lem}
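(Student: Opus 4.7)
The plan is to argue by contradiction. Suppose $\pi \in \Ir(G_n)$ is unramified and admits a nonzero Shalika model $\Sb_\pi(\chi)$ with $\chi$ ramified. Fix a nonzero spherical vector $v_0 \in V$, so $\pi(k)v_0 = v_0$ for every $k \in K_n$, and let $J \in \Sb_\pi(\chi)$ be its image; then $J$ is right $K_n$-invariant and satisfies $J(sg) = \chi_\psi(s) J(g)$ for $s \in S$.

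The first step shows $J(1_n) = 0$. For any $s \in S \cap K_n$, combining the two transformation laws at $g = 1_n$ yields
\[
\chi_\psi(s)\, J(1_n) = J(s \cdot 1_n) = J(1_n \cdot s) = J(1_n).
\]
Since $\chi$ is ramified, one chooses $u \in \oo^\tm$ with $\chi(u) \neq 1$ and takes $s = \diag(a,a) \in S \cap K_n$ with $a = \diag(u, 1, \ldots, 1) \in K_m$; then $\chi_\psi(s) = \chi(\det a) = \chi(u) \neq 1$, forcing $J(1_n) = 0$. By $K_n$-right-invariance $J$ vanishes on $K_n$, and by left $S$-equivariance on $SK_n$.

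The main obstacle is extending this vanishing to all of $G_n$. The plan is to iterate the key identity: for any $g \in G_n$ and any $s \in gK_ng^{-1} \cap S$ (equivalently $k := g^{-1}sg \in K_n$), one has $J(g) = J(gk) = J(sg) = \chi_\psi(s)\, J(g)$, so $J(g) = 0$ whenever $\chi_\psi|_{gK_ng^{-1} \cap S}$ is nontrivial. By the Iwasawa decomposition $G_n = PK_n$ with $P$ the standard $(m,m)$-parabolic, it suffices to treat $g \in P$. Writing $g = \begin{bmatrix} a_1 & b \\ 0 & a_2 \end{bmatrix}$, one checks that $gK_ng^{-1} \cap S$ always contains Levi-type elements $\diag(c,c)$ with $c$ ranging in a $K_m$-conjugate subgroup whose image under $\det$ equals $\oo^\tm$; since $\chi$ is ramified, $\chi \circ \det$ is nontrivial there.

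The hardest step, and main obstacle, is to ensure that the Levi-type intersection has nontrivial $\chi_\psi$-value \emph{uniformly} in $g \in P$, including when $a_1$ or $a_2$ lie deep in $G_m$ (so that natural conjugation shrinks the intersection). A careful case analysis on the block structure of $g$, combined with the auxiliary relation $\omega_\pi = \chi^m$ on the center (which is unramified, since $\pi$ is), should yield $\chi_\psi|_{gK_ng^{-1} \cap S} \not\equiv 1$ for every $g \in P$, forcing $J \equiv 0$ and contradicting the assumption that $\Sb_\pi(\chi)$ is nonzero.
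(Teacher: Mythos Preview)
Your proposal has a genuine gap in the main step. You reduce to $g\in P$ and then hope that for every such $g$ the intersection $gK_ng^{-1}\cap S$ contains an element with $\chi_\psi\neq 1$; but you do not prove this, and the ``careful case analysis'' you allude to is not carried out. Note that the only elements of $S$ that lie in \emph{every} conjugate $gK_ng^{-1}$ are the scalars $u\cdot 1_n$, and on these $\chi_\psi$ gives $\chi(u)^m=\omega_\pi(u)=1$ (since $\pi$ is unramified) --- so the scalars never help, and your mention of the relation $\omega_\pi=\chi^m$ is a red herring rather than a tool. For non-scalar $c$ one must control the intersection $a_1K_ma_1^{-1}\cap a_2K_ma_2^{-1}$ and the off-diagonal block simultaneously; this is doable but requires real work that is absent from your write-up.

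The paper avoids this difficulty entirely by using the \emph{full} left $S$-equivariance of $J$ (not merely elements of $S$ that happen to lie in $gK_ng^{-1}$) to simplify $g$ before applying the vanishing trick. Concretely: from $J|_{B_n}\not\equiv 0$ (Iwasawa), any $b\in B_n$ factors as
\[
b=\begin{bmatrix}1_m & X\\ & 1_m\end{bmatrix}\begin{bmatrix}a_2 & \\ & a_2\end{bmatrix}\begin{bmatrix}a_2^{-1}a_1 & \\ & 1_m\end{bmatrix},
\]
with the first two factors in $S$, so $J|_{\ac{G}_m}\not\equiv 0$. Then Cartan on $G_m$ together with the same $S$-trick (absorb the left $K_m$-factor as $\mathrm{diag}(k,k)\in S$ and the residual $\mathrm{diag}(1_m,k^{-1})$ into $K_n$) reduces to $J(\ac{d})\neq 0$ for some diagonal $d\in D_m$. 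At that point the element $\mathrm{diag}(t,1,\ldots,1,t,1,\ldots,1)\in S\cap K_n$ \emph{commutes} with $\ac{d}$, and one line gives $\chi(t)J(\ac{d})=J(\ac{d})$ for all $t\in\oo^\times$, the desired contradiction. The missing idea in your argument is precisely this reduction of $g$ to $\ac{D}_m$ via left $S$-moves; once you have it, no case analysis is needed.
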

\begin{proof}
Assume that $\pi$ has a Shalika model relevant to ramified $\chi$.
Then there exists a nontrivial $K_{n}$-invariant $J \in \Sb_\pi(\chi)$.
By the Iwasawa decomposition of $G_{n}$, $J|_{B_{n}} \not\equiv 0$.
By (\ref{eq:propSh}), $J|_{\ac{B}_{n/2}} \not\equiv 0$.
By (\ref{eq:propSh}) and the Cartan decomposition of $G_{n/2}$, we may assume that $J(\ac{d}) \neq 0$ for some $d \in D_{n/2}$.
By (\ref{eq:propSh}) and the $K_{n}$-invariance property of $J$,  
\begin{align*}
\chi(t) J(\ac{d}) = J(\ac{t} \ac{d}) =  J(\ac{d} \ac{t}) =  J(\ac{d})
\end{align*} 
for any $t \in \oo^\tm$.
Since $\chi$ is ramified, it follows that $J(\ac{d}) = 0$, a contradiction.
\end{proof}
\nid
Combining the above lemmas and Prop. \ref{prop:redG}, 
\begin{prop}\label{prop:genPSur}
Any lift of $\tau \in \Ir(G_{r})$ has no pre-Shalika model relevant to $\chi$, if one of $\tau, \chi$ is ramified and another is unramified.
\end{prop}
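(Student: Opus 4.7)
The plan is to combine Prop.~\ref{prop:redG} with the two ramification lemmas just established, treating the parity of $r$ and the two possible ramification mismatches separately. If $r$ is odd, Prop.~\ref{prop:redG}~i) directly gives that every lift $\tau_l$ admits no pre-Shalika model relevant to any $\chi$, so both ramification-mismatch scenarios are handled at once.

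Now assume $r$ is even. Prop.~\ref{prop:redG}~ii) yields, for each even $l>r$,
\[
\dim\Hom_{P_l}(\tau_l,\Ib^l(\chi))
   = \dim\Hom_{G_r}\bigl(\tau,\In_{S_r}^{G_r}(\nu^{(l-r)/2}\chi_\psi)\bigr).
\]
Write $\chi':=\nu^{(l-r)/2}\chi$; since $\nu$ is unramified, $\chi'$ has the same conductor as $\chi$. If $\tau$ is unramified and $\chi$ (hence $\chi'$) is ramified, Lemma~\ref{lem:unrnoS} forces the right-hand side to vanish, settling the proposition in this sub-case.

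The remaining sub-case, $\tau$ ramified and $\chi$ unramified, is the delicate one. Here I appeal to Lemma~\ref{lem:notauram}, which says $\tau_l$ has no nontrivial $P_l(\oo)$-invariant vector. Suppose for contradiction that $\tau_l$ carries a nonzero pre-Shalika form, and realize it as an embedding $\tau_l\hookrightarrow\Ib^l(\chi)=\In_{S_l^\circ}^{P_l}(\chi_\psi)$. Since $\chi$ is unramified and $\psi$ is trivial on $\oo$, the character $\chi_\psi$ is trivial on $S_l^\circ\cap P_l(\oo)$, so the induced space $\Ib^l(\chi)$ contains a distinguished nonzero $P_l(\oo)$-invariant section supported on $S_l^\circ\cdot P_l(\oo)$. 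Using the uniqueness of pre-Shalika models (Thm.~\ref{thm:1dimprtau}) together with a $P_l(\oo)$-averaging of the embedded image against this canonical section, the plan is to exhibit a nonzero $P_l(\oo)$-fixed vector inside the image, and hence inside $\tau_l$, contradicting Lemma~\ref{lem:notauram}.

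The main obstacle is the averaging step in this last sub-case: a naive $P_l(\oo)$-average of an arbitrary vector can collapse to zero, so one must use both the one-dimensionality of the pre-Shalika model and the explicit structure of the canonical $P_l(\oo)$-invariant pre-Shalika form of $\psi_l$ computed in Section~\ref{sec:PSM} to ensure the promoted vector is nonzero. With that step in hand, all four combinations (parity of $r$ times which of $\tau,\chi$ is ramified) close out, yielding the proposition.
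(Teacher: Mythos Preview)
Your handling of $r$ odd and of the sub-case ``$\tau$ unramified, $\chi$ ramified'' matches the paper: Prop.~\ref{prop:redG} reduces everything to Shalika models on $G_r$, and Lemma~\ref{lem:unrnoS} kills those.

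The genuine gap is in the sub-case $\tau$ ramified, $\chi$ unramified. Your strategy is to embed $\tau_l\hookrightarrow\Ib^l(\chi)$, then $P_l(\oo)$-average a vector in the image to obtain a nonzero fixed vector, contradicting Lemma~\ref{lem:notauram}. But the mechanism you invoke to certify that the average is nonzero --- the explicit $P_l(\oo)$-invariant form $J_{l,\chi}^\circ$ of \S\ref{subsec:spsf} --- lives in $\Ib_{\psi_l}(\chi)$, the image of $\psi_l$ under its pre-Shalika model, not in the image of $\tau_l$. These are distinct irreducible $P_l$-submodules of the (highly reducible) induced space $\Ib^l(\chi)$, and the uniqueness statement Thm.~\ref{thm:1dimprtau} only says that each irreducible has at most a line of embeddings; it does not let you transport a section from one copy to another. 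Worse, Lemma~\ref{lem:notauram} already tells you that the image of $\tau_l$ contains \emph{no} nonzero $P_l(\oo)$-fixed vector, so any $P_l(\oo)$-average performed inside that image is forced to vanish --- the averaging route cannot in principle produce the contradiction you want. What is actually needed (and what the paper's terse ``combining'' leaves implicit) is an argument on the $G_r$ side via Prop.~\ref{prop:redG}~ii), showing directly that a ramified $\tau$ admits no Shalika model relevant to an unramified character; Lemma~\ref{lem:notauram} does not deliver this through the averaging scheme you outline.
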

\subsection{Essential pre-Shalika form}\label{subsec:spsf}
Let $n = 2m$ be an even integer. 
If $\tau \in \Ir(P_n)$ has a Shalika model relevant to $\chi$, then that of $\tau_{n+2}$ relevant to $\nu^{-1}\chi$ is given as follows.
Let $\dr A$ be the right $G_m$-invariant measure on $P_m \bs G_m$ normalized so that $\vol(P_m \bs P_mK_m) = 1$.
Let $\dr x$ be the Haar measure on $F^m$ normalized so that $\vol(\oo^m) = 1$.
For $\vp \in \Ib_\tau(\chi)_{n+2}$, consider the function $\theta_\vp$ on $P_{n+2}$ defined by the integral: 
\begin{align*}
\theta_\vp(p) &= \int_{P_m \bs G_{m}} \int_{F^m}\chi \circ \det(A)^{-1} \vp(\eta \begin{bmatrix}
A & & &  \\
&1 & x & \\
& & A &  \\
& & &1  \\
\end{bmatrix}p) \dr x \dr A \\
&= \int_{P_m \bs G_{m}} \int_{F^m}  \chi \circ \det(A)^{-1} \vp(\begin{bmatrix}
A & & &  \\
&A &  & \\
& x & 1 &  \\
& & &1  \\
\end{bmatrix}\eta p) \dr x \dr A
\end{align*} 
where $\eta$ is the matrix (\ref{def:h}).
Since the integrated function as a function on $G_m \ltimes F^{m}$ is compactly supported modulo $P_m$, the integrals can be exchanged and converge.
Obviously, $\theta_\vp$ lies in $\Ib^{n+2}(\nu^{-1}\chi)$.
Denote by $\theta_{n+2}^\chi$ this linear map $\vp \mapsto \theta_\vp$.
We will see the nontriviality of $\theta_{n+2}^\chi$.
We may assume that (\ref{ass:psi}). 
Take a $\xi \in \Ib_\tau(\chi)$. 
For a nonnegative integer $l$, define the open compact subgroup 
\begin{align*}
C_n(l) = \{p \in P_{n}(\oo) \mid p \equiv 1_{n} \pmod{\p^l} \} \subset P_{n}.
\end{align*}
Since $\Ib_\tau(\chi)$ is smooth, $\xi$ is invariant under $C_{n}(l)$ for a sufficiently large $l$.
Define the $C_{n+2}(l)$-invariant $\vp_\xi \in \Ib_\tau(\chi)_{n+2}$ by 
\begin{align*}
\vp_\xi(p) = 
\begin{cases}
\psi(u) \xi(g) & \mbox{if $p \in  u \ac{g} C_{n+2}(l)$ for $g \in P_{n}, u \in U_{n+1}U_{n+2}$} \\
0 & \mbox{otherwise.} 
\end{cases}
 \end{align*}
Denote by $\phi_n^{(l)}$ this construction (linear map) $\xi \mapsto \vp_\xi$
for $C_n(l)$-invariant $\xi \in \Ib_{\tau}(\chi)$. 
 \begin{prop}\label{prop:welldefJW}
 We have the followings.
 \begin{enumerate}[i)]
\item The above $\vp_\xi$ is well-defined.
\item If $t \not\in \oo^\tm$, then
\begin{align*}
\vp_\xi(\begin{bmatrix}
* & * &*  \\
&t & * \\
& & 1
\end{bmatrix}) = 0.
\end{align*} 
\item $\theta_{\vp_\xi}(\eta^{-1})$ is a nonzero constant multiple of $\xi(1_n)$.
\item Let $T$ be a subset of $P_n$.
Define the function $\xi'$ on $P_n$ by
\begin{align*}
\xi'(g) = \int_T \xi(t g) \dr t.
\end{align*} 
Then, for $u \in U_{n+1}U_{n+2}$, 
\begin{align*}
\int_{T} \vp_\xi(t u\ac{g}) \dr t = \psi(u)\vp_{\xi'}(\ac{g}).
\end{align*} 
\end{enumerate}
\end{prop}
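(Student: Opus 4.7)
The strategy is to handle the four parts in order. For (i), given two decompositions $p = u_1 \ac{g}_1 c_1 = u_2 \ac{g}_2 c_2$, I rewrite the equation as $\ac{g}_2^{-1}(u_2^{-1}u_1)\ac{g}_1 = c_2 c_1^{-1} \in C_{n+2}(l)$ and, using that $\ac{P}_n$ normalizes $U_{n+1}U_{n+2}$, factor the left-hand side as $u' \ac{g}$ with $u' \in U_{n+1}U_{n+2}$ and $g := g_2^{-1}g_1 \in P_n$. The congruence $u' \ac{g} \equiv 1_{n+2} \pmod{\p^l}$ forces, by inspecting the top-left $n \times n$ block, that $g \in C_n(l)$, and by inspecting the entries at positions $(n, n+1)$ and $(n+1, n+2)$, that the $\psi$-relevant coordinates of $u'$ lie in $\p^l \subset \oo$. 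Hence $\psi(u_1) = \psi(u_2)$, and right $C_n(l)$-invariance of $\xi$ gives $\xi(g_1) = \xi(g_2)$, which proves (i).

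Assertion (ii) follows from a direct matrix calculation: for any $p = u \ac{g} c$ in the support of $\vp_\xi$, the $(n+1, n+1)$ entry of $p$ reduces to $c_{n+1, n+1} \in 1 + \p^l \subset \oo^\tm$; so a matrix whose middle entry $t$ lies outside $\oo^\tm$ cannot be so decomposed, and $\vp_\xi$ vanishes on it.

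The bulk of the work is (iii). Abbreviate
\begin{align*}
T(A, x) := \begin{bmatrix} A & & & \\ & A & & \\ & x & 1 & \\ & & & 1 \end{bmatrix},
\end{align*}
which is the integrand argument in the second form of $\theta_\vp$ at $p = \eta^{-1}$. Applying (ii) to row $n+1$ shows $\vp_\xi(T(A, x)) = 0$ unless $x \in (\p^l)^m$, and then $T(A, x) = T(A, 0) c_x$ with $c_x \in C_{n+2}(l)$, so $\vp_\xi(T(A, x)) = \vp_\xi(T(A, 0))$. Via Iwasawa $G_m = P_m K_m$ I pick representatives $A \in K_m$ modulo $P_m(\oo)$ for $P_m \bs G_m$, and claim that $T(A, 0)$ admits a decomposition $\ac{g} c$ precisely when the last row of $A$ lies in $(0, \ldots, 0, 1) + \p^l \oo^m$. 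For such $A$, I let $A' \in P_m \cap K_m$ agree with $A$ on the first $m - 1$ rows and have last row $(0, \ldots, 0, 1)$, set
\begin{align*}
g := \begin{bmatrix} A & \\ & A' \end{bmatrix} \in P_n,
\end{align*}
and take $c$ to be the element of $C_{n+2}(l)$ whose upper-left $n \times n$ block is $\begin{bmatrix} 1_m & \\ & (A')^{-1}A \end{bmatrix} \in \Gamma_n(l)$ and whose remaining entries are those of $1_{n+2}$. The Shalika factorization
\begin{align*}
\begin{bmatrix} A & \\ & A' \end{bmatrix} = \begin{bmatrix} A' & \\ & A' \end{bmatrix} \cdot \begin{bmatrix} (A')^{-1}A & \\ & 1_m \end{bmatrix},
\end{align*}
with the first factor lying in $S_n^\circ$, combined with right $C_n(l)$-invariance of $\xi$ (enlarging $l$ so that $l \geq e$, permissible by smoothness of $\xi$), then collapses the normalized integrand $\chi(\det A)^{-1} \vp_\xi(T(A, 0))$ to the constant $\xi(1_n)$ throughout the allowed region. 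The integrand is well defined on $P_m \bs G_m$ because $\begin{bmatrix} h & \\ & h \end{bmatrix} \in S_n^\circ$ for $h \in P_m$; since the allowed region has nonzero finite measure, the integral is a nonzero constant multiple of $\xi(1_n)$.

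The chief obstacle is arranging the decomposition in (iii) so that $g \in P_n$ and $c \in C_{n+2}(l)$ hold simultaneously while the Shalika twist remains controllable; once the recipe is in place, the rest is routine. Finally, (iv) is direct: writing $\ac{t} u \ac{g} = (\ac{t} u \ac{t}^{-1}) \ac{tg}$, the last row of $t$ being $(0, \ldots, 0, 1)$ ensures that conjugation preserves the entries at $(n, n+1)$ and $(n+1, n+2)$ controlling $\psi$, so $\vp_\xi(\ac{t} u \ac{g}) = \psi(u) \xi(tg)$; integrating over $T$ and reading off $\vp_{\xi'}(\ac{g}) = \xi'(g)$ from the trivial decomposition $\ac{g} = 1 \cdot \ac{g} \cdot 1$ completes the argument.
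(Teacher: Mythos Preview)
Your overall strategy matches the paper's, and parts (i), (ii), (iv) are fine. There are, however, two slips in your treatment of (iii).

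First, the claim ``Via Iwasawa $G_m = P_m K_m$'' is false: the mirabolic $P_m$ requires the last row to be $(0,\ldots,0,1)$, so it does not contain the full diagonal torus, and $P_m K_m$ consists only of those $g$ whose last row is a unimodular vector in $\oo^m$ (for instance $\Vp 1_m \notin P_m K_m$). What is true --- and what the paper states without your false justification --- is that the integrand $\vp_\xi(T(A,0))$ already vanishes unless $A \in P_m K_m(l)$. This follows by inspecting the $n$-th row of a putative decomposition $u\ac{g}c$: since $g\in P_n$ forces the $n$-th row of $u\ac{g}$ to be $(0,\ldots,0,1,*,*)$, the $n$-th row of $u\ac{g}c$ lies in $(0,\ldots,0,1,0,0)+(\p^l)^{n+2}$; comparing with the $n$-th row $(0,\ldots,0,A_{m,1},\ldots,A_{m,m},0,0)$ of $T(A,0)$ forces $A_{m,j}\equiv\delta_{m,j}\pmod{\p^l}$. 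Once you have this, your construction of $A'$ goes through (and your remark ``enlarging $l$'' should really be $l\ge\max(e,1)$, so that $A'$ is guaranteed invertible and $\chi$ is trivial on $1+\p^l$).

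Second, ``Applying (ii) to row $n+1$'' to deduce $x\in(\p^l)^m$ is imprecise: statement (ii) concerns the $(n+1,n+1)$ entry of a block upper-triangular matrix, whereas $T(A,x)$ has the entries $x_1,\ldots,x_m$ sitting in positions $(n+1,m+1),\ldots,(n+1,2m)$. What you need is the same direct row-$n{+}1$ inspection as above: the $(n+1)$-th row of $u\ac{g}c$ lies in $(0,\ldots,0,1,0)+(\p^l)^{n+2}$, which forces $x\in(\p^l)^m$. This is the argument the paper has in mind when it writes ``by the definition of $\vp_\xi$.''
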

\begin{proof}
i) Suppose that $\ac{g} u C_{n+2}(l) = \ac{h} v C_{n+2}(l)$ for $g,h \in P_{n}$ and  
\begin{align*}
u = \begin{bmatrix}
1_n & {}^tx & {}^t y  \\
& 1& z \\
& &1 
\end{bmatrix}, 
v = \begin{bmatrix}
1_n & {}^t x' & {}^t y'  \\
& 1& z' \\
& &1 
\end{bmatrix} \in U_{n+1} U_{n+2},
\end{align*}
where $x,x',y,y' \in F^n$, $z,z' \in F$.
Then, $v^{-1} \ac{h}^{-1}\ac{g} u$ lies in $C_{n+2}(l)$, and is in the form of 
\begin{align*}
\begin{bmatrix}
h^{-1}g & {}^tw &* \\
& 1& z-z'  \\
& &1 
\end{bmatrix}, w_n = x_n -x_n'.
\end{align*} 
Thus, $x_n -x_n', z-z'$ lie in $\p^l$, and $h^{-1}g$ lies in $C_{n}(l)$. \\
ii) Follows from our construction of $\vp_\xi$ and the observation:
\begin{align*}
\begin{bmatrix}
* & &  \\
& 1&  \\
& & 1
\end{bmatrix} 
\begin{bmatrix}
1_n & * &*  \\
&1 & * \\
& & 1
\end{bmatrix} \begin{bmatrix}
* & * &*  \\
&t & * \\
& & 1
\end{bmatrix} \not\in C_{n+2}(l).
\end{align*} 
iii) Let $K_m(l) = \{k \in K_m \mid k \equiv 1 \pmod{\p^l} \}$.
By the definition of $\vp_\xi$, 
\begin{align*}
\vp_\xi(\begin{bmatrix}
A & & &  \\
&A &  & \\
& x & 1 &  \\
& & &1  \\
\end{bmatrix}) = 0
\end{align*} 
unless $x \in (\p^l)^m$ and $A \in P_m K_m(l)$. 
Therefore, 
\begin{align*}
\theta_{\vp_\xi}(\eta^{-1}) &= \int_{P_m \bs P_m K_m(l)} \int_{(\p^l)^m}  \chi \circ \det(A)^{-1} \vp_\xi(\begin{bmatrix}
A & & &  \\
&A &  & \\
& x & 1 &  \\
& & &1  \\
\end{bmatrix}) \dr x \dr A \\
&= q^{-lm} \int_{P_m \bs P_m K_m(l)}  \chi \circ \det(A)^{-1} \xi(A) \dr A \\
&= q^{-lm} \vol(P_m \bs P_m K_m(l)) \xi(1_n). 
\end{align*} 
iv) Follows from the definition of $\phi_n^{(l)}$ and the fact that $\psi|_{U_{n+1}U_{n+2}}$ is stabilized by $P_n$.
\end{proof}
\nid 
By iii), $\theta_{n+2}^\chi$ is nontrivial.
By Theorem \ref{thm:1dimprtau}, 
\begin{align}
\Hom_{P_n}(\Ib_\tau(\chi)_n, \Ib_{\tau_n}(\nu^{-1}\chi)) = \C \theta_n^\chi \label{eq:thnchi}
\end{align} 
for $\tau \in \Ir(P_{n-2})$ having a Shalika model relevant to $\chi$.
Since $\theta_r^\chi$ is an integral over a subset of $P_r$, it is also applied to functions $\Phi$ on $P_n, n > r$ such that 
\begin{align*}
\Phi(s p) = \chi_\psi(s) \Phi(p), \ \ s \in \ac{S}_r^\circ.
\end{align*}   
For $\xi \in \psi_n$, set   
\begin{align*}
\Lam_n^\chi \xi := \theta_{n}^{\nu \chi} \circ \cdots \circ \theta_4^{\nu^{m-1} \chi}( \xi) \in \Ib_{\psi_n}(\chi).
\end{align*} 
Denote by $\Lam_n^\chi$ the linear map $\xi \mapsto \Lam_n^\chi \xi$.
If $\Lam_n^\chi$ is nontrivial, then it follows from the irreducibilities of $\psi_n, \Ib_{\psi_n}(\chi)$ 
\begin{thm}\label{thm:PreSLam}
For an arbitrary $\chi$, $\Lam_n^\chi$ is bijective, and 
\begin{align*}
\Hom_{P_n}(\psi_n, \Ib_{\psi_n}(\chi)) = \C \Lam_n^\chi.
\end{align*}
\end{thm}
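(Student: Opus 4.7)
The plan is to reduce the theorem to showing $\Lambda_n^\chi$ is nontrivial, and then establish nontriviality by an inductive construction using Proposition \ref{prop:welldefJW}. First, by Proposition \ref{prop:red0} combined with Frobenius reciprocity, $\Hom_{P_n}(\psi_n, \Ib_{\psi_n}(\chi))$ is at most one-dimensional. Both $\psi_n$ and $\Ib_{\psi_n}(\chi)$ are irreducible smooth $P_n$-modules, so any nonzero element of this Hom space has trivial kernel (by irreducibility of the source) and surjective image (by irreducibility of the target), hence is bijective. Thus both assertions of the theorem follow once we show $\Lambda_n^\chi \not\equiv 0$.

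To prove nontriviality, I would induct on $m$, constructing at each level an explicit pair $(\xi, p_0) \in \psi_n \times P_n$ with $(\Lambda_n^\chi \xi)(p_0) \neq 0$. For the base case $m = 2$, Proposition \ref{prop:red0} gives a nonzero pre-Shalika form $\xi_0 \in \Ib_{\psi_2}(\nu\chi) \simeq \psi_2$, which, by translation on the right by an element of $P_2$, we may assume satisfies $\xi_0(1_2) \neq 0$. Setting $\xi = \phi_2^{(l)}(\xi_0) \in \psi_4$ for $l$ sufficiently large (so that $\xi_0$ is $C_2(l)$-invariant), Proposition \ref{prop:welldefJW} iii) yields $(\Lambda_4^\chi \xi)(\eta^{-1}) = \theta_4^{\nu\chi}(\xi)(\eta^{-1}) = q^{-l}\vol(P_1 \bs P_1 K_1(l)) \cdot \xi_0(1_2) \neq 0$.

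For the inductive step, assume nontriviality at level $m - 1$, witnessed by $(\tilde\xi, \tilde p_0) \in \psi_{n-2} \times P_{n-2}$. Define $\xi = \phi_{n-2}^{(l)}(\tilde\xi) \in \psi_n$ for large $l$. The composition $\Lambda_n^\chi = \theta_n^{\nu\chi} \circ \theta_{n-2}^{\nu^2\chi} \circ \cdots \circ \theta_4^{\nu^{m-1}\chi}$ naturally splits as $\theta_n^{\nu\chi}$ composed with an operator that formally agrees with $\Lambda_{n-2}^{\nu\chi}$ but is applied to functions on $P_n$ via the embedding $\ac{\cdot}\colon P_{n-2} \hookrightarrow P_n$. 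Using Proposition \ref{prop:welldefJW} iv) iteratively to commute the integration defining each inner $\theta_r^\cdot$ (which operates only in the upper-left block) past the $\phi_{n-2}^{(l)}$-construction (which specifies $\psi$-equivariance under $U_{n-1}U_n$), the intermediate output is shown to equal $\phi_{n-2}^{(l)}(\Lambda_{n-2}^{\nu\chi}(\tilde\xi))$ up to a nonzero scalar. Applying $\theta_n^{\nu\chi}$ and evaluating at $\eta^{-1} \cdot \ac{\tilde p_0}$, Proposition \ref{prop:welldefJW} iii) produces a nonzero multiple of $\Lambda_{n-2}^{\nu\chi}(\tilde\xi)(\tilde p_0) \neq 0$, completing the induction.

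The main obstacle is verifying that $\phi_{n-2}^{(l)}$ intertwines, up to a nonzero scalar, with the chain of inner $\theta$ operators. This requires careful bookkeeping of the integration domains $P_r \bs G_r \times F^r$ at each level $r$, together with the $\psi$-equivariance conditions on $U_{r-1}U_r$ introduced by the lift, and the conjugation by the permutation elements $\eta$ which shuffle coordinates between the upper-left block and the bottom two rows. Part ii) of Proposition \ref{prop:welldefJW}, which forces vanishing outside a specific Iwasawa-type stratum, is needed to ensure that the pertinent integrals collapse to evaluations of $\tilde\xi$, and part iv) gives the precise intertwining with the auxiliary integration.
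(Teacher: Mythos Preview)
Your reduction to nontriviality via irreducibility and the one-dimensionality from Proposition \ref{prop:red0} matches the paper exactly. The inductive scheme you propose for nontriviality is correct in outline and is in fact \emph{more economical} than what the paper does: the paper establishes nontriviality by identifying $\Lambda_n^\chi\xi_n$ with the essential pre-Shalika form $J_{n,\chi}^\circ$ (Proposition \ref{prop:Presheq}) and then carrying out the lengthy explicit support computation of Theorems \ref{thm:suppJ} and \ref{thm:unrpsi} to see that $J_{n,\chi}^\circ\not\equiv 0$. Your argument bypasses that computation by using Proposition \ref{prop:welldefJW} iii) directly at each inductive step. The commutation of the inner $\theta_r$'s past $\phi_{n-2}^{(l)}$ that you need is precisely the mechanism behind the paper's proof of Proposition \ref{prop:Presheq}, and Proposition \ref{prop:welldefJW} iv) (trivially extended to allow the weight $\chi(A)^{-1}$) does give this commutation for arbitrary $\tilde\xi$, not just $\xi_n$; the identity is exact, so your ``up to a nonzero scalar'' is unnecessary. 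What the paper's longer route buys is the explicit description of $\supp(J_{n,\chi}^\circ)$, which is the real goal of the section and is needed for Theorem \ref{thm:main}; for Theorem \ref{thm:PreSLam} alone your approach is cleaner.

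One technical point deserves correction. Proposition \ref{prop:welldefJW} iii) is stated only for evaluation at $\eta^{-1}$, yielding a nonzero multiple of $\xi(1_{n-2})$, not at $\eta^{-1}\ac{\tilde p_0}$. Extending it to the translated point requires analyzing how $R_{\ac{\tilde p_0}}$ interacts with $\phi_{n-2}^{(l)}$, which changes the level $l$ and hence the constant in iii); this can be done but is not immediate. The cleaner fix is to strengthen the inductive hypothesis: once $\Lambda_{n-2}^{\nu\chi}$ is known to be nonzero it is bijective, so you may choose $\tilde\xi$ with $\Lambda_{n-2}^{\nu\chi}(\tilde\xi)(1_{n-2})\neq 0$ and then evaluate at $\eta^{-1}$ directly, exactly as you already do in the base case.
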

\nid 
Immediately, follows 
\begin{Cor}\label{Cor:PreSLam}
For arbitrary $\chi$ and $\chi'$, 
\begin{align*}
\Lam_n^{\chi'} \circ (\Lam_n^\chi)^{-1}(J_{n,\chi}^\circ) &= J_{n,\chi'}^\circ, \\ 
\Hom_{P_n}( \Ib_{\psi_n}(\chi), \Ib_{\psi_n}(\chi')) &= \C \Lam_n^{\chi'} \circ (\Lam_n^\chi)^{-1}.
\end{align*}
\end{Cor}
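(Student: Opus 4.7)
The plan is to deduce both assertions from Theorem~\ref{thm:PreSLam} by a pure linear algebra argument, with no further analysis of the integrals defining $\theta_r^\chi$.

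For the second identity, I would use that $\Lam_n^\chi\colon \psi_n \to \Ib_{\psi_n}(\chi)$ is a bijective $P_n$-intertwiner by Theorem~\ref{thm:PreSLam}. Precomposition with $\Lam_n^\chi$ therefore gives a $\C$-linear isomorphism
\[
\Hom_{P_n}(\Ib_{\psi_n}(\chi), \Ib_{\psi_n}(\chi')) \xrightarrow{\sim} \Hom_{P_n}(\psi_n, \Ib_{\psi_n}(\chi')), \qquad \Phi \longmapsto \Phi \circ \Lam_n^\chi.
\]
Theorem~\ref{thm:PreSLam} applied to $\chi'$ identifies the target with $\C\, \Lam_n^{\chi'}$; reading off preimages, the source is spanned by $\Lam_n^{\chi'} \circ (\Lam_n^\chi)^{-1}$, which is the second displayed equality.

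For the first identity, I take $J_{n,\chi}^\circ$ to be the canonical $P_n(\oo)$-invariant pre-Shalika form of $\psi_n$ from Subsection~\ref{subsec:spsf}, which is built by applying $\Lam_n^\chi$ to a distinguished vector $\xi^\circ \in \psi_n$ whose construction does not involve $\chi$ (for instance, the vector produced by iterating $\phi_r^{(l)}$ starting from a fixed vector in $\psi_2$). Granting $J_{n,\chi}^\circ = \Lam_n^\chi(\xi^\circ)$ and similarly for $\chi'$, the first identity collapses to
\[
\Lam_n^{\chi'} \circ (\Lam_n^\chi)^{-1}(J_{n,\chi}^\circ) \;=\; \Lam_n^{\chi'}(\xi^\circ) \;=\; J_{n,\chi'}^\circ.
\]

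The only genuine obstacle is verifying that $\xi^\circ$ can be chosen independent of $\chi$. Tracing back through the iterated definition of $\Lam_n^\chi = \theta_n^{\nu\chi} \circ \cdots \circ \theta_4^{\nu^{m-1}\chi}$, the $\chi$-dependence enters only through the scalars $\chi \circ \det(A)^{-1}$ inside the defining integrals of $\theta_r^{\nu^i \chi}$, and Prop.~\ref{prop:welldefJW}(iii)--(iv) show that these factors are untwisted exactly by the $\ac{S}_r^\circ$-equivariance of the pre-Shalika forms they produce. Provided $J_{n,\chi}^\circ$ and $J_{n,\chi'}^\circ$ are normalized by a common $\chi$-independent condition at a canonical base point---the identity, or the element $g_n$ of Theorem~\ref{thm:main}---the source vector $\xi^\circ$ comes out the same for every $\chi$, so the first identity follows.
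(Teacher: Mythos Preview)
Your argument for the second identity is exactly the paper's: precompose with the bijection $\Lam_n^\chi$ and invoke Theorem~\ref{thm:PreSLam} for $\chi'$.

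For the first identity your plan is also the paper's, but the execution in your final paragraph misidentifies the content of the needed lemma. The vector $\xi^\circ$ is not something you have to ``choose'' or pin down by a normalization: it is the explicit $\chi$-independent function $\xi_n$ of \eqref{def:xin}, and the required input is precisely Proposition~\ref{prop:Presheq}, which proves the exact identity $J_{n,\chi}^\circ = \Lam_n^\chi \xi_n$. Note that $J_{n,\chi}^\circ$ is \emph{not defined} as $\Lam_n^\chi$ applied to something; it is defined by the interleaved iteration \eqref{def:Theta}, and Proposition~\ref{prop:Presheq} shows by induction on $r$ that this agrees with first building $\xi_n = \phi_{n}^{(0)} \circ \cdots \circ \phi_2^{(0)}(\xi_2)$ and then applying $\Lam_n^\chi$. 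Your ``tracing back'' heuristic about $\chi$-dependence entering only through the scalars $\chi\circ\det(A)^{-1}$ does not establish this commutation of the $\theta$'s past the $\phi$'s, and the remark about normalization at $1_n$ or $g_n$ is beside the point (the equality in Proposition~\ref{prop:Presheq} is on the nose, with no constant). Once you cite Proposition~\ref{prop:Presheq}, the first identity is the one-line computation you wrote:
\[
\Lam_n^{\chi'} \circ (\Lam_n^\chi)^{-1}(J_{n,\chi}^\circ) = \Lam_n^{\chi'}(\xi_n) = J_{n,\chi'}^\circ.
\]
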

To see the nontriviality of $\Lam_n^\chi$, we observe a particular pre-Shalika form constructed by it as follows.
Fix $\psi$ such that (\ref{ass:psi}).
For $f \in (f_1, \ldots, f_{n-1}) \in \Z^{n-1}$, let 
\begin{align*}
\Vp^f = \diag(\Vp^{f_1}, \ldots, \Vp^{f_{n-1}}).
\end{align*}
Define the $P_n(\oo)$-invariant $\xi_n \in \psi_n$ by
\begin{align}
\xi_n(\ac{\Vp}^f) = 
\begin{cases}
1 & \mbox{if $f = 0 \in \Z^{n-1}$} \\
0 & \mbox{otherwise.} 
\end{cases} \label{def:xin}
\end{align} 
Observe that 
\begin{align*}
\xi_n = \phi_{n}^{(0)} \circ \cdots \circ \phi_2^{(0)}(\xi_2).
\end{align*} 
For an even integer $n \ge 4$, we call  
\begin{align}
J_{n,\chi}^\circ := \theta_n^{\nu \chi} \circ \phi_{n-2}^{(0)} \circ \cdots \circ \theta_4^{\nu^{m-1} \chi} \circ \phi_2^{(0)}(\xi_2) \in \Ib_{\psi_n}(\chi), \label{def:Theta}
\end{align} 
 {\it the essential pre-Shalika form relevant to $\chi$}.
 \begin{prop}\label{prop:Presheq}
With the above notation, 
\begin{align*}
 J_{n,\chi}^\circ = \Lam_n^\chi \xi_n.
 \end{align*} 
\end{prop}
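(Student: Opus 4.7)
The plan is to prove $J_{n,\chi}^\circ = \Lam_n^\chi \xi_n$ by induction on $m = n/2 \geq 2$, with the crucial input being the naturality of the lift construction $\phi_r^{(0)}: \xi \mapsto \vp_\xi$ with respect to $P_r$-equivariant maps. First I would make explicit the preliminary fact that $\xi_n = \phi_{n-2}^{(0)}(\xi_{n-2})$, where $\xi_r$ is the normalized $P_r(\oo)$-invariant vector in $\psi_r$. This follows from the pointwise formula $\vp_\xi(\ac{g}u\,C_n(0)) = \psi(u)\xi(g)$ from Prop.~\ref{prop:welldefJW}(i) combined with the defining condition $\xi_n(\ac{\Vp}^f) = \delta_{f,0}$. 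The base case $n=4$ is then immediate: $\Lam_4^\chi = \theta_4^{\nu\chi}$ and $\xi_4 = \phi_2^{(0)}(\xi_2)$, so $\Lam_4^\chi \xi_4 = \theta_4^{\nu\chi}(\phi_2^{(0)}(\xi_2)) = J_{4,\chi}^\circ$ by definition.

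For the inductive step, suppose $J_{n-2,\chi'}^\circ = \Lam_{n-2}^{\chi'}\xi_{n-2}$ holds for every character $\chi'$. By the nesting of the defining compositions one has
$$J_{n,\chi}^\circ = \theta_n^{\nu\chi}\circ\phi_{n-2}^{(0)}(J_{n-2,\nu\chi}^\circ) \quad\text{and}\quad \Lam_n^\chi = \theta_n^{\nu\chi}\circ\Psi^2(\Lam_{n-2}^{\nu\chi}),$$
where $\Psi^2$ denotes the iterated lift functor $\Alg(P_{n-2}) \to \Alg(P_n)$. Substituting $\xi_n = \phi_{n-2}^{(0)}(\xi_{n-2})$ into the second identity and applying the inductive hypothesis reduces the desired equality to the commutation
$$\Psi^2(\Lam_{n-2}^{\nu\chi})\circ\phi_{n-2}^{(0)} \;=\; \phi_{n-2}^{(0)}\circ\Lam_{n-2}^{\nu\chi}$$
on $C_{n-2}(0)$-invariants. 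This commutation is the naturality of $\phi^{(0)}$: the pointwise formula for $\vp_\xi$ shows that applying a $P_{n-2}$-equivariant map $f$ before or after $\phi^{(0)}$ yields the same function, since in either order the value on a representative $\ac{g}u\,C_n(0)$ is $\psi(u)\,f(\xi)(g)$. Combining these ingredients gives $\Lam_n^\chi \xi_n = \theta_n^{\nu\chi}(\phi_{n-2}^{(0)}(\Lam_{n-2}^{\nu\chi}\xi_{n-2})) = \theta_n^{\nu\chi}(\phi_{n-2}^{(0)}(J_{n-2,\nu\chi}^\circ)) = J_{n,\chi}^\circ$, closing the induction.

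The main obstacle is the bookkeeping of the implicit identifications $\Ib_\tau(\chi)_r \simeq \psi_r$ furnished by the uniqueness of pre-Shalika models (Prop.~\ref{prop:red0}, Theorem~\ref{thm:1dimprtau}) and verifying that $\Lam_n^\chi$ genuinely factors as $\theta_n^{\nu\chi}\circ\Psi^2(\Lam_{n-2}^{\nu\chi})$ in a way compatible with the pointwise description of $\phi^{(0)}$ and $\Psi^2$. Once $\Psi^2(f)$ is unwrapped as post-composition with $f$ at the base level of the compact-induction realization, the naturality becomes essentially tautological, so no direct computation of values on specific test elements is needed.
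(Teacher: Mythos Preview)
Your argument is correct and rests on the same underlying fact as the paper's proof, namely that the lift $\phi^{(0)}$ commutes with left-translation integrals over subsets of the smaller mirabolic (Prop.~\ref{prop:welldefJW}\,iv)). The route, however, is organized differently. You induct on $n$: using $\xi_n=\phi_{n-2}^{(0)}(\xi_{n-2})$ and the hypothesis $J_{n-2,\nu\chi}^\circ=\Lam_{n-2}^{\nu\chi}\xi_{n-2}$, the whole statement reduces to the single commutation
\[
\phi_{n-2}^{(0)}\circ\Lam_{n-2}^{\nu\chi}\;=\;(\theta_{n-2}\circ\cdots\circ\theta_4)\circ\phi_{n-2}^{(0)}
\]
on $C_{n-2}(0)$-invariants, which is exactly Prop.~\ref{prop:welldefJW}\,iv) applied with $T$ equal to the (iterated) integration domain of the $\theta$'s, once one notes that both sides vanish off $U_{n-1}U_n\,\ac{P}_{n-2}\,C_n(0)$ and agree there by the pointwise formula for $\vp_\xi$. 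The paper instead fixes $n$ and runs an inner induction on the level $r$ from $3$ to $m$, proving at each step the partial identity
\[
(\theta_{2r-2}\circ\cdots\circ\theta_4)\,\xi_n\big|_{\ac{B}_{2r-1}}
\;=\;
\phi_{2r-2}^{(0)}\bigl(J_{2r-2,\nu^{\,m-r+1}\chi}^\circ\bigr)\big|_{\ac{B}_{2r-1}},
\]
again via Prop.~\ref{prop:welldefJW}\,iv), and then invokes $P_n(\oo)$-invariance and the decomposition $P_n=\ac{B}_{n-1}P_n(\oo)$ to globalize. Your packaging via $\Psi^2$ buys a cleaner one-line reduction and avoids the pointwise bookkeeping on Borel elements; the paper's version has the advantage of staying entirely in the concrete scalar-function picture, so no identification between $\Psi^2(\Lam_{n-2}^{\nu\chi})$ and the integral $\theta_{n-2}\circ\cdots\circ\theta_4$ on $P_n$ needs to be spelled out. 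The one place your write-up should be tightened is precisely that identification: rather than invoking the functor $\Psi^2$, it is simpler (and matches the paper's definitions) to say that the $\theta_{2k}$ for $k<m$ are literally the same left-translation integrals whether applied to a scalar function on $P_{n-2}$ or on $P_n$, so commuting them past $\phi_{n-2}^{(0)}$ is Prop.~\ref{prop:welldefJW}\,iv) verbatim.
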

\begin{proof}
Since $\xi_4 = \phi_2^{(0)}(\xi_2)$, the assertion is obvious if $n = 4$.
Suppose that $n \ge 6$.
By the decomposition $P_{n} = \ac{B}_{2n-1} P_n(\oo)$, it suffices to show that
\begin{align}
\theta_{2r-2} \circ \cdots \circ \theta_4 \xi_n(\ac{b}) = \phi_{2r-2} \circ \theta_{2r-2} \circ \cdots \circ \theta_4 \xi_4(\ac{b}), \ \ b \in B_{2r-1}, \label{eq:JLxi}
\end{align} 
for $3 \le r \le n/2$, where we drop the subscripts $\nu^*\chi$ and $0$ from the notation.
We will show this by induction on $r$.
Suppose that $r =3$.
Any matrix in $B_5$ is in the form of 
\begin{align}
u p \begin{bmatrix}
1_{2r-3} &  &  \\
& x &  \\
& & y
\end{bmatrix}, \ p \in P_{2r-2}, x,y \in F^\tm, u \in U_{2r-2} U_{2r-1} \label{eq:B2r+1}
\end{align}
with $r = 3$.
By iv) of Prop. \ref{prop:welldefJW}, at the embedding of this matrix, both sides of (\ref{eq:JLxi}) takes $\psi(u)\theta_4 \xi_4(\ac{h}_{3}p)$, if $x,y \in \oo^\tm$, and zero otherwise.
Here $h_3$ indicates the matrix corresponding to (\ref{def:hn+1}).
Therefore, the statement for $r =3$ is true.
Assume the statement for a particular $r = k$ is true.
Then,  
\begin{align*}
\theta_{2k} \circ \cdots \circ \theta_4 \xi_n(\ac{b}) = \theta_{2k} \circ \phi_{2k-2} \circ \theta_{2k-2} \circ \cdots \circ \theta_4 \xi_4(\ac{b}), \ b \in B_{2k-1}.
\end{align*} 
By the definition of $\phi_{2k}$,  
\begin{align*}
\theta_{2k} \circ \cdots \circ \theta_4 \xi_n(\ac{b}) = \phi_{2k} \circ \theta_{2k} \circ \cdots \circ \theta_4 \xi_4(\ac{b}), \ b \in B_{2k-1}.
\end{align*} 
By the same argument for $r  =3$, this identity holds also for any matrix in $B_{2k+1}$, which in the form of (\ref{eq:B2r+1}) with $r = k+1$.
This establishes the induction step.
\end{proof}
To compute the values of $J_{n,\chi}^\circ$, we need transform the unipotent matrices
\begin{align*}
u(x) &= u_x = \begin{bmatrix}
1_{r} & {}^t x  \\
&1
\end{bmatrix} \in U_{r+1} \\ 
\bu(x) &= \bu_x = \begin{bmatrix}
1_{r} &   \\
x &1
\end{bmatrix} \in \bar{U}_{r+1} 
\end{align*} 
for $x = (x_1, \ldots, x_{r}) \in F^{r}$.
For $a \in F$, let
\begin{align*}
a^* &= \begin{cases}
a^{-1} & \mbox{if $a \neq 0$} \\
0 & \mbox{if $a = 0$} 
\end{cases} \\
\hat{a} &= \begin{cases}
a & \mbox{if $a \neq 0$} \\
1 & \mbox{if $a = 0$.}
\end{cases}
\end{align*} 
Write $x^*$ for $(x_1^*, \ldots, x_r^*)$.
Let $\s = \s_x \in \Sf_{r}$ be the permutation defined by the rules: 
\begin{itemize}
\item $o(x_{\s(r)}) \le \cdots \le o(x_{\s(1)})$.
\item $\s(i) < \s(i+1)$ if $o(x_{\s(i+1)}) = o(x_{\s(i)})$.
\end{itemize}
For $f \in \Z^{r}$, let $\tau_x^f \subset \{1, \ldots, r \}$ be the subset consisting of $i$ such that
\begin{itemize}
\item $x_{\s(i)} \not\in \oo$.
\item $o(x_{\s(i)}) -o(x_{\s(j)}) < f_{\s(j)} - f_{\s(i)} $ for all $j > i$.
\end{itemize}
By definition, 
\begin{align}
 x \not\in \oo^r \ (\Longleftrightarrow) \ x_{\s(r)} \not\in \oo \ (\Longrightarrow) \ \tau_x^f \neq \emptyset. \label{eqv:xo}
\end{align} 
In particular, if $x \not\in \oo^r$, then $r \in \tau_x^f$.
For a subset $\tau$ of $\{1, \ldots, r \}$, let
\begin{align*}
x(\tau) = (x(\tau)_i) \in F^{r}, \ \mbox{where} \ \ 
x(\tau)_i = 
\begin{cases}
x_i & \mbox{if $i \in \s(\tau)$} \\
0 & \mbox{otherwise}.
\end{cases}
\end{align*} 
Let 
\begin{align}
d(x) &= d_x = \diag(d_{x,1}, \ldots, d_{x,r+1}) \in D_{r+1}\notag \\
d_{x,i} &= \begin{cases}
1/\hat{x}_{i} & \mbox{if $i = \s(1)$} \\
\hat{x}_{\s(r)}& \mbox{if $i = r+1$} \\
\hat{x}_{\s(\s^{-1}(i)-1)}/\hat{x}_{i} & \mbox{otherwise}.
\end{cases}
\label{def:dxi}
\end{align}
Let
\begin{align*}
K_r^1 &= \SL_r(\oo) \\
H_f &= (K_r^1)^{\Vp^f}.
\end{align*}
\begin{prop}\label{prop:LST}
With the above notation, $\bu_x$ lies in the double coset 
\begin{align*}
\ac{H_f} u(x(\tau_x^f)^*) d(x(\tau_x^f)) K^1_{r+1}.
\end{align*}  
\end{prop}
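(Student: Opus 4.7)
The strategy is to construct $h \in H_f$ and $k \in K_{r+1}^1$ explicitly so that
\[
\bu_x \;=\; \ac{h} \cdot u(x(\tau_x^f)^*)\, d(x(\tau_x^f)) \cdot k,
\]
and then to verify the memberships. First I would dispose of the trivial case $x \in \oo^r$: by (\ref{eqv:xo}) one has $\tau_x^f = \emptyset$, so $u(0)\, d(0) = 1_{r+1}$, and it suffices to take $h = 1_r$ and $k = \bu_x \in K_{r+1}^1$.

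Assume $x \notin \oo^r$ and write $y = x(\tau_x^f)$. Set $k = (u(y^*)\, d(y))^{-1} \ac{h}^{-1} \bu_x$; the task is to pick $h$ so that $k$ has entries in $\oo$, whereupon $\det k = 1$ follows automatically from $\det d(y) = 1$ (a telescoping calculation from the explicit formula (\ref{def:dxi})) together with $\det h = 1$ (since $H_f \subset \SL_r(F)$). The matrix $(u(y^*)\, d(y))^{-1}$ is upper triangular with explicit entries, and a direct calculation shows, independently of $h$, that the last row of $k$ equals $(x_1/x_{\sigma(r)}, \ldots, x_r/x_{\sigma(r)}, 1/x_{\sigma(r)}) \in \oo^{r+1}$, using the minimality of $o(x_{\sigma(r)})$ among the $o(x_i)$ and the fact that $x_{\sigma(r)} \notin \oo$ (since $r \in \tau_x^f$). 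For the upper rows I would define $h^{-1}$ entry-by-entry: $1$'s on the diagonal, $0$ whenever the corresponding entry of $(u(y^*)\, d(y))^{-1} \bu_x$ already lies in $\oo$, and an appropriate ratio $\pm x_a/x_{a'}$ cancelling the non-integral contribution otherwise.

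The main obstacle is verifying $h \in H_f = (K_r^1)^{\Vp^f}$, i.e.\ that each nonzero off-diagonal entry $h_{ab}$ satisfies $o(h_{ab}) \ge f_a - f_b$. This is precisely the role of the strict inequality $o(x_{\sigma(i)}) - o(x_{\sigma(j)}) < f_{\sigma(j)} - f_{\sigma(i)}$ in the definition of $\tau_x^f$: the prescribed ratios $x_a/x_{a'}$ entering $h^{-1}$ translate exactly into the required bounds $o(h_{ab}) \ge f_a - f_b$. The bookkeeping — specifying which off-diagonal entries of $h$ are nonzero in terms of the combinatorics of $\sigma$ and $\tau_x^f$, and then checking that every residual entry of $k$ lies in $\oo$ — is where the bulk of the work sits. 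A clean presentation could go by induction on $|\tau_x^f|$: peel off the pivot $x_{\sigma(r)}$ at each step, reduce to $\bu_{x'}$ on $F^{r-1}$ with suitably restricted $\sigma$ and $f$, and assemble the resulting pieces.
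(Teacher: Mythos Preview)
Your approach is genuinely different from the paper's. The paper argues by induction on $r$: after conjugating by $\ac{\sigma_x}$ to reduce to $\sigma_x = \mathrm{id}$, it peels off the last coordinate $y = x_{r+1}$ via the Gauss decomposition (\ref{eq:GD}), shows that the remaining factor contains $\bu(x'')$ for a shorter vector $x'' = x'(\tau'')$, applies the induction hypothesis to $x''$, and then uses a small lemma to absorb a residual unipotent into $H_f$. The preliminary reduction to $\sigma_x = \mathrm{id}$ is what makes the bookkeeping tractable; your direct construction forgoes this and therefore has to track $\sigma_x$ and $\sigma_y$ (which differ) throughout.

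Your direct construction is plausible, and your computation of the last row of $k$ is correct, but there is a gap in the logic around $\det h = 1$. You write that $\det k = 1$ ``follows automatically from $\det d(y) = 1$ together with $\det h = 1$ (since $H_f \subset \SL_r(F)$)'' --- but you have not yet shown $h \in H_f$; you are constructing $h^{-1}$ entry by entry, with $1$'s on the diagonal and scattered off-diagonal ratios $x_j/x_i$, and such a matrix need not have determinant $1$. What is actually needed is that, after conjugation by $\sigma$, the matrix $h^{-1}$ is unipotent (triangular), which then forces $\det h^{-1} = 1$. This does come out of the structure, but it requires precisely the careful case analysis you defer. Similarly, the claim that the inequality in the definition of $\tau_x^f$ ``translates exactly'' into $o((h^{-1})_{ab}) \ge f_a - f_b$ is the heart of the matter and needs to be checked index by index; the paper's induction packages this into the short lemma immediately following the proposition, applied once per step.

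Your suggested induction on $|\tau_x^f|$ is closer in spirit to the paper (since the paper's step takes $\tau_x^f$ to $\tau'' = \tau_x^f \setminus \{r\}$), but note that peeling off $x_{\sigma(r)}$ does not literally drop to $F^{r-1}$ unless $\sigma(r) = r$; this is exactly why the paper conjugates by $\sigma$ first. If you want to carry out your plan, I would recommend adopting that same reduction.
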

\begin{proof}
We prove by induction on $r$.
For $r = 1$, the statement follows immediately from the the Gauss decomposition: 
\begin{align}
\begin{bmatrix}
1&  \\
y &1
\end{bmatrix} = \begin{bmatrix}
1& y^{-1}  \\
&1
\end{bmatrix}
\begin{bmatrix}
& y^{-1} \\
-y &
\end{bmatrix}\begin{bmatrix}
1&y^{-1}  \\
&1
\end{bmatrix}, \ \ y \neq 0. \label{eq:GD}
\end{align} 
Assume the induction hypothesis that the statement for $r = i$ is true.
Let $x \in F^{i+1}$, and $f \in \Z^{i+1}$.
The statement for $i+1$ is obvious if $x$ lies in $\oo^{i+1}$.
Assume $x \not\in \oo^{i+1}$.
Conjugating $\bu_x$ by $\ac{\s_x} \in G_{i+2}$, we may assume that $\s_x$ is the identity, i.e., $o(x_1) \ge \cdots \ge o(x_{i+1})$.
Write
\begin{align*}
x' = (x_{1}, \ldots, x_i), \ f' = (f_{1}, \ldots, f_i), \ y = x_{i+1}.
\end{align*}
Using (\ref{eq:GD}), we transform
\begin{align*}
\bar{u}_x &= 
\begin{bmatrix}
1_{i} & & \\
 &1 &  \\
x' & &1  \\
\end{bmatrix}
\begin{bmatrix}
1_{i} & & \\
 &1 &  \\
 & y &1  \\
\end{bmatrix} \\
&= 
\begin{bmatrix}
1_{i} & & \\
 &1 &  \\
x' & &1  \\
\end{bmatrix}
\begin{bmatrix}
1_{i} & & \\
 &1 & y^{-1}  \\
 & &1  \\
\end{bmatrix} 
\begin{bmatrix}
1_{i} & & \\
 & & y^{-1} \\
 & -y & \\
\end{bmatrix}
\begin{bmatrix}
1_{i} & & \\
 &1 & y^{-1}  \\
 & &1  \\
\end{bmatrix}.
\end{align*} 
Since $y^{-1} \in \oo$, $\bu_x$ lies in the left coset
\begin{align*}
& \begin{bmatrix}
1_{i} & & \\
 &1 &  \\
x' & &1  \\
\end{bmatrix}
\begin{bmatrix}
1_{i} & & \\
 &1 & y^{-1}  \\
 & &1  \\
\end{bmatrix} 
\begin{bmatrix}
1_{i} & & \\
 &y^{-1} &  \\
 & &y  \\
\end{bmatrix}K_{i+2}^1 \\
&= 
\begin{bmatrix}
1_{i} & & \\
 &1 & y^{-1}  \\
 & &1  \\
\end{bmatrix}
\begin{bmatrix}
1_{i} & & \\
 &y^{-1} &  \\
 & &y  \\
\end{bmatrix}
\begin{bmatrix}
1_{i} & & \\
 -x' &1 &  \\
& &1  \\
\end{bmatrix} 
\begin{bmatrix}
1_{i} & & \\
 &1 &  \\
y^{-1}x' & &1 \\
\end{bmatrix} K_{i+2}^1.
\end{align*}
Since $y^{-1}x'$ lies in $\oo^{i+1}$, this equals
\begin{align*} 
\begin{bmatrix}
1_{i} & & \\
 &1 & y^{-1}  \\
 & &1  \\
\end{bmatrix}
\begin{bmatrix}
1_{i} & & \\
 &y^{-1} &  \\
 & &y  \\
\end{bmatrix}
\begin{bmatrix}
1_{i} & & \\
 -x' &1 &  \\
& &1  \\
\end{bmatrix} K_{i+2}^1.
\end{align*} 
Put
\begin{align*}
\tau'' = \tau_{x'}^{f'} \cap \{j \in \{1, \ldots, i \} \mid o(x_j) -o(y) < f_{i+1} -f_j \}, 
\end{align*} 
so that 
\begin{align}
\tau_x^f = \tau'' \sqcup \{i+1 \} = \tau_{x''}^{f'} \sqcup \{i+1 \}. \label{eq:tauxf}
\end{align} 
The last left coset is contained in the double coset
\begin{align*}
H_f \begin{bmatrix}
1_{i} & & \\
 &1 & y^{-1}  \\
 & &1  \\
\end{bmatrix}
\begin{bmatrix}
1_{i} & & \\
 &y^{-1} &  \\
 & &y  \\
\end{bmatrix}\begin{bmatrix}
1_{i} & & \\
x'' &1 &  \\
& &1  \\
\end{bmatrix} K_{i+2}^1, 
\end{align*} 
where we write $x''$ for $x'(\tau'')$.
By the induction hypothesis, 
\begin{align*}
\bu(x'') \in H_{f'} u(x''(\tau_{x''}^{f'})^*)d(x''(\tau_{x''}^{f'})) K_{i+1}^1.
\end{align*} 
Since $\ac{H}_{f'}$ is contained in $H_f$, the last double coset is contained in 
\begin{align*}
& H_f \begin{bmatrix}
1_{i} & & \\
 &1 & y^{-1}  \\
 & &1  \\
\end{bmatrix}
\begin{bmatrix}
1_{i} & & \\
 &y^{-1} &  \\
 & &y  \\
\end{bmatrix} \ac{u}(x''(\tau_{x''}^{f'})^*)\ac{d}(x''(\tau_{x''}^{f'})) K_{i+2}^1 \\
=& 
H_f \ac{u}(y^{-1}x''(\tau_{x''}^{f'})^*) u(x(\tau_{x''}^{f'} \sqcup \{ i + 1\})^*) \begin{bmatrix}
1_{i} & & \\
 &y^{-1} &  \\
 & &y  \\
\end{bmatrix} \ac{d}(x''(\tau_{x''}^{f'})) K_{i+2}^1.
\end{align*} 
By (\ref{eq:tauxf}), this equals 
\begin{align*}
H_f \ac{u}(y^{-1}x''(\tau_{x''}^{f'})^*) u(x(\tau_x^f)^*) d(x(\tau_x^f)) K_{i+2}^1.
\end{align*}  
By (\ref{eq:tauxf}) and the definition of $\tau_x^f$,
\begin{align*}
o(x''(\tau_{x''}^{f'})_j) -o(y) = o(x(\tau_x^f)_j) -o(y) < f_{i+1} -  f_j
\end{align*} 
for all $ j \in \{1, \ldots, i \}$ such that $x_j \not\in \oo$.
By the lemma below, $u(y x''(\tau_{x''}^{f'})^*)$ lies in $H_{f'}$.
Thus the last double coset equals 
\begin{align*}
H_f u(x(\tau_x^f)^*) d(x(\tau_x^f)) K_{i+2}^1.
\end{align*}
This establishes the induction step.
\end{proof}
\begin{rem}\label{rem:ux}
Equivalently, $u_x$ lies in 
\begin{align*}
\ac{H}_{-f} \bu(x(\tau_x^f)^*) d(x(\tau_x^f))^{-1} K^1_{r+1}.
\end{align*} 
\end{rem}
\begin{lem}
Let $f \in \Z^r$.
Let $x \in F^{r-1}$ such that $\s_x$ is the identity.
Then, $u(x(\tau)^*)$ lies in $H_f$, where 
\begin{align*}
\tau = \{i \in \{1, \ldots, r-1\} \mid o(x_{i}) < f_{r} - f_{i} \}.
\end{align*}  
\end{lem}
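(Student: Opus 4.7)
The plan is to reduce the claim to an entrywise integrality check by conjugating with $\Vp^{-f}$. By definition $H_f = \Vp^f K_r^1 \Vp^{-f}$, so $u(x(\tau)^*) \in H_f$ is equivalent to $\Vp^{-f} u(x(\tau)^*) \Vp^f \in \SL_r(\oo)$. Since conjugation by a diagonal matrix preserves upper unipotence, the conjugate again has all diagonal entries equal to $1$, so determinant $1$ is automatic; only the integrality of the remaining entries is at issue.

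Next I would unfold definitions. The hypothesis that $\s_x$ is the identity gives $x(\tau)^*_i = x_i^*$ for $i \in \tau$ and $0$ otherwise, so
\begin{align*}
u(x(\tau)^*) = 1_r + \sum_{i \in \tau} x_i^* E_{i,r},
\end{align*}
and diagonal conjugation rescales the $(i,r)$-entry by $\Vp^{f_r - f_i}$, yielding
\begin{align*}
\Vp^{-f} u(x(\tau)^*) \Vp^f = 1_r + \sum_{i \in \tau} x_i^* \Vp^{f_r - f_i} E_{i,r}.
\end{align*}
Hence everything reduces to proving $x_i^* \Vp^{f_r - f_i} \in \oo$ for each $i \in \tau$. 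For such $i$, the defining inequality $o(x_i) < f_r - f_i$ forces $x_i \neq 0$ (otherwise $o(x_i) = +\infty$ would not be strictly below an integer), so $x_i^* = x_i^{-1}$ with $o(x_i^*) = -o(x_i)$, and the strict inequality gives $-o(x_i) + f_r - f_i \ge 1$, placing the entry in $\p \subset \oo$.

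There is no real obstacle here: this is a bookkeeping lemma whose sole role is to feed the inductive step of Proposition \ref{prop:LST}, where the factor $u(y^{-1} x''(\tau_{x''}^{f'})^*)$ must be absorbed into $H_{f'}$ via exactly this inclusion. The only care required is in keeping track of conventions — writing $o(0) = +\infty$ and $0^* = 0$ so that indices $i \notin \tau$ drop out cleanly — and in noting that the strict inequality in the definition of $\tau$ supplies precisely the integer unit of slack that delivers the conjugated entries into $\p$ rather than merely into some fractional ideal.
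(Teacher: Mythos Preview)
Your proof is correct and is precisely the unwinding that the paper suppresses under the single word ``Obvious.'' The reduction to $\Vp^{-f} u(x(\tau)^*) \Vp^f \in \SL_r(\oo)$, the entrywise description $1_r + \sum_{i\in\tau} x_i^* \Vp^{f_r-f_i} E_{i,r}$, and the valuation bound $-o(x_i) + f_r - f_i \ge 1$ coming from the strict inequality defining $\tau$ are exactly the intended (and only) argument.
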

\begin{proof}
Obvious.
\end{proof}
Now let $n = 2m$ be an even positive integer.
We will abbreviate
\begin{align*}
\chi = \chi \circ \det, \ J_n' = \phi_n^{(0)} J_{n,\chi}^\circ.
\end{align*}
By definition, 
\begin{align*}
J_{n+2,\nu^{-1}\chi}^\circ(p) &= \int_{F^m} \int_{P_m \bs G_{m}} \chi(A)^{-1} J_{n}'(\begin{bmatrix}
A & & &  \\
&A & & \\
& x & 1 &  \\
& & &1  \\
\end{bmatrix}p^\eta) \dr A \dr x.
\end{align*} 
\begin{lem}\label{lem:RSSP}
Any $(S_{n+2}^\circ, P_{n+2}(\oo))$-double coset in $P_{n+2}$ contains a matrix in the form of the embedding into $G_{n+2}$ of   
\begin{align}
\begin{bmatrix}
1_{m} & {}^t \beta\\
& 1 
\end{bmatrix}
\begin{bmatrix}
\Vp^f &  \\
& \Vp^l 
\end{bmatrix}, \ \ f \in \Z^{m}, l \in \Z, \beta \in F^m. \label{def:g_n+2}
\end{align} 
\end{lem}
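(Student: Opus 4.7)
The plan is to reduce $p \in P_{n+2}$ to the desired form by a three-stage block-matrix manipulation. Write $p = \begin{bmatrix} A & B \\ C & D \end{bmatrix}$ in $(m+1) \times (m+1)$ blocks; since $p \in P_{n+2}$ has last row $(0,\ldots,0,1)$, the block $C$ has zero last row and $D$ has last row $(0,\ldots,0,1)$. Left multiplication by $\begin{bmatrix} a & b \\ 0 & a \end{bmatrix} \in S_{n+2}^\circ$ (with $a \in P_{m+1}$, $b \in \M_{m+1}(F)$) sends $(A,B,C,D) \mapsto (aA+bC,\,aB+bD,\,aC,\,aD)$, while right multiplication by $k \in P_{n+2}(\oo)$ performs $\GL_{n+1}(\oo)$-column operations on the first $n+1$ columns together with $\oo$-shifts of the last column by the first $n+1$.

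The first stage is to normalize $[C \mid D]$ to $[0 \mid 1_{m+1}]$. A rank count using invertibility of $p$ shows that rows $r_{m+2}, \ldots, r_{n+1}$ restricted to the first $n+1$ columns form an $m \times (n+1)$ matrix of full row rank $m$. Using the $\GL_m(F)$-component of the $a$-action on these rows together with the right $\GL_{n+1}(\oo)$-column-action, and exploiting the transitivity of $\GL_{n+1}(\oo)$ on the Grassmannian of $m$-dimensional subspaces of $F^{n+1}$ (a consequence of Iwasawa), I reduce this submatrix to $[0_{m,m+1} \mid 1_m]$. The last column of $[C \mid D]$ then has the form $({}^td''_*,\,1)$ with $d''_* \in F^m$, and the entries $a_{i,m+1} \in F$ (which add $F$-multiples of the last row $(0,\ldots,0,1)$ into rows $r_{m+2}, \ldots, r_{n+1}$) kill $d''_*$. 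At this point $p = \begin{bmatrix} A' & B' \\ 0 & 1_{m+1} \end{bmatrix}$, and a single $b$-action with $b = -B'$ produces $p = \begin{bmatrix} A' & 0 \\ 0 & 1_{m+1} \end{bmatrix}$.

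For the final stage I would reduce $A'$ to the target shape by the coupled move $(a, k)$ with $a \in P_{m+1}(\oo)$ and $k = \begin{bmatrix} k_{11} & k_{12} \\ 0 & a^{-1} \end{bmatrix} \in P_{n+2}(\oo)$ (legitimate because $a \in P_{m+1}(\oo)$ forces $a^{-1} \in P_{m+1}(\oo)$), where $k_{11} \in \GL_{m+1}(\oo)$ and $k_{12} \in \M_{m+1}(\oo)$. This move preserves the normalized bottom block $[0 \mid 1_{m+1}]$ and acts on $A'$ by $A' \mapsto aA' k_{11}$; any $k_{12}$-induced shift of $B'$ is re-killed by a subsequent $b$-move. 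By Iwasawa I absorb the compact factor of $A'$ into $k_{11}$, so that $A' \in B_{m+1}(F)$ takes upper-triangular block form $\begin{bmatrix} B_{11} & B_{12} \\ 0 & B_{22} \end{bmatrix}$. Cartan decomposition $\GL_m(\oo) \bs \GL_m(F) / \GL_m(\oo)$ applied to $B_{11}$ diagonalizes the top-left block to $\Vp^f = \diag(\Vp^{f_1},\ldots,\Vp^{f_m})$, the bottom-right entry normalizes to $\Vp^l$ with $l = o(B_{22})$ via a unit, and the top-right column becomes an arbitrary vector in $F^m$, which I record as $\Vp^l\, {}^t\beta$ for some $\beta \in F^m$.

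The main technical obstacle is the coupling of the $a$-action between top and bottom blocks: the same $a$ acts on both blocks simultaneously, so any left move in the third stage risks disturbing the bottom block fixed in the first. This is precisely what forces the pairing of $a \in P_{m+1}(\oo)$ with the right compensating $a^{-1}$ in $k_{22}$, restricting the residual left action on $A'$ to compact scalars and making the diagonal $\Vp^f$ arise from a genuine Cartan decomposition rather than from free $F$-diagonal scaling.
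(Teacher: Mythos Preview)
Your proof is correct and uses the same three ingredients as the paper---Iwasawa decomposition, the left $S^\circ_{n+2}$-action, and the Cartan decomposition of $G_m$---but in a different order. The paper's argument is three lines: since $P_{n+2}\simeq G_{n+1}\ltimes U_{n+2}$, Iwasawa for $G_{n+1}$ shows every right $P_{n+2}(\oo)$-coset meets $B_{n+2}$; then for $p=\begin{bmatrix}A&B\\0&D\end{bmatrix}\in B_{n+2}\cap P_{n+2}$ (so $D\in B_{m+1}\cap P_{m+1}$), a single left multiplication by $\begin{bmatrix}D^{-1}&-D^{-1}BD^{-1}\\0&D^{-1}\end{bmatrix}\in S^\circ_{n+2}$ lands in $\acute{B}_{m+1}$; finally Cartan on the upper $m\times m$ block finishes. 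Your route instead normalizes the bottom block first via transitivity of $K_{n+1}$ on the Grassmannian (which is indeed Iwasawa in disguise), kills $B'$, and then runs Iwasawa and Cartan on the surviving $A'$. The coupling constraint you identify---that a left $a$-move in stage~3 disturbs the bottom block unless compensated by $a^{-1}$ on the right, forcing $a\in P_{m+1}(\oo)$---is an artifact of this ordering; the paper's order sidesteps it entirely, because doing Iwasawa on all of $G_{n+1}$ up front lets the subsequent $S^\circ$-move use $a=D^{-1}\in P_{m+1}(F)$ with no integrality restriction and no compensation needed.
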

\begin{proof}
Since $P_{n+2}$ is isomorphic to $G_{n+1} \ltimes U_{n+2}$, any left coset of $P_{n+2}(\oo)$ intersects with $B_{n+2}$ by the Iwasawa decomposition of $G_{n+1}$.
Therefore, any $(S_{n+2}^\circ, P_{n+2}(\oo))$-double coset intersects with $\ac{B}_{m+1}$.
Now, the assertion follows from the Cartan decomposition of $G_{m}$.
\end{proof}
\nid 
Let $g$ be a matrix in the form of (\ref{def:g_n+2}).
Then,
\begin{align*}
J_{n+2, \nu^{-1}\chi}^\circ(g) = 
\int_{F^m} \psi(- {\rm tr}( x {}^t\beta) ) \If_x \dr x,
\end{align*} 
where 
\begin{align}
\If_x := \int_{P_m \bs G_{m}} \chi(A)^{-1} J_n'(\begin{bmatrix}
A \Vp^f & & &  \\
&A & & \\
& & \Vp^l &  \\
& & &1  \\
\end{bmatrix}
\begin{bmatrix}
1_m & & &  \\
& 1_m & & \\
& \Vp^{-l} x & 1&  \\
& & &1  \\
\end{bmatrix}) \dr A. \label{def:Ix}
\end{align} 
Write $y$ for $\Vp^{-l} x$.

First, consider the case where $\chi$ is ramified, i.e., $e > 0$.
If $y$ lies in $\oo^{m}$, then $\If_x$ equals
\begin{align*}
\int_{P_m \bs G_{m}} \chi^{-1}(A) J_n'(\begin{bmatrix}
A \Vp^f  & & &  \\
&A & & \\
& & \Vp^l &  \\
& & &1  \\
\end{bmatrix}) \dr A.
\end{align*} 
This is zero since $\chi$ is ramified.
Assume that 
\begin{align*}
y \not\in \oo^m.
\end{align*}
Abbreviate 
\begin{align*}
\s = \s_y, \ \tau = \tau_y^f, \ y_0 = y_{\s(m)}.
\end{align*}
Since $y \not\in \oo^m$, $\hat{y}_{0}$ equals $y_{0}$.
By Prop. \ref{prop:LST}, there exists an $h \in H_f$ such that $\bu_y \in \ac{h} u_{y(\tau)^*}d_{y(\tau)} K^1_{m+1}$.
Write 
\begin{align*}
d_{y(\tau)} = \begin{bmatrix}
d^\circ &  \\
& y_{0}
\end{bmatrix}.
\end{align*} 
Define $\dot{y} \in F^{m}$ by 
\begin{align*}
u(y(\tau)^*)d(y(\tau)) = d(y(\tau)) u(\dot{y}).
\end{align*}  
Then, 
\begin{align*}
\If_x &= \int \chi(A)^{-1} J_n'(\begin{bmatrix}
A \Vp^f & & &  \\
&A h d^\circ & & \\
& &\Vp^l y_{0}&  \\
& & &1  \\
\end{bmatrix}
\begin{bmatrix}
1_m & & &  \\
& 1_m & {}^t \dot{y} & \\
& & 1&  \\
& & &1  \\
\end{bmatrix}) \dr A \\
=& \chi(y_0)^{-1}\int \chi(A)^{-1} J_n'(\begin{bmatrix}
A d_{}^{\circ -1} h^{-1} \Vp^f & & &  \\
&A & & \\
& & \Vp^l y_{0} &  \\
& & &1  \\
\end{bmatrix}
\begin{bmatrix}
1_m & & &  \\
& 1_m & {}^t \dot{y}& \\
& & 1&  \\
& & &1  \\
\end{bmatrix}) \dr A.
\end{align*} 
where the integrals in $A$ are over $P_m \bs G_{m}$. 
The integrated function equals 
\begin{align}
\chi(A)^{-1} \psi\left(\frac{(A {}^t \dot{y})_m}{\Vp^l y_{0}}\right) J_n'(\begin{bmatrix}
A d_{}^{\circ -1}\Vp^f & & &  \\
&A & & \\
& & \Vp^l y_{0} &  \\
& & &1  \\
\end{bmatrix}). \label{eq:intIxfn}
\end{align}  
Define diagonal matrices $\delta_m, \delta_m^\sharp \in D_m$ by
\begin{align}
\begin{split}
\delta_m &= 
\begin{cases}
1  & \mbox{if $m=1$} \\
\diag(\Vp^e, \Vp^{3e}, \ldots, \Vp^{(2m-3)e}, \Vp^{(m-1)e}) & \mbox{if $m > 1$} 
\end{cases}\\
\delta_m^\sharp &= 
\begin{cases}
1 & \mbox{if $m = 1$} \\
\diag(1, \Vp^{2e}, \ldots, \Vp^{2(m-1)e} ) & \mbox{if $m > 1$.} 
\end{cases}
\end{split}
\label{def:dm}
\end{align}
Define unipotent matrices $v_m  \in U_m, v_m^\sharp  \in \bar{U}_m$ by 
\begin{align}
\begin{split}
v_m &= u((\Vp^{-(m-1)e}, \Vp^{-(m-2)e }, \ldots, \Vp^{-e})) \\
v_m^\sharp&= \bu((\Vp^{(m-1)e},\Vp^{(m-2)e }, \ldots, \Vp^e)).
\end{split}
\label{def:um}
\end{align} 
One can see by using Prop. \ref{prop:LST} that
\begin{align*}
S_{n}^\circ \ac{v}_{m} \ac{\delta}_{m} P_{n}(\oo) = S_{n}^\circ \ac{v}^\sharp_{m} \ac{\delta}_{m}^\sharp P_{n}(\oo).
\end{align*} 
\begin{thm}\label{thm:suppJ}
If $\chi$ is ramified, then the support of $J_{n,\chi}^\circ$ is the above double coset space.
\end{thm}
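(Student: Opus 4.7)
The plan is to proceed by induction on $m$, using the recursive relation $J_{n+2,\nu^{-1}\chi}^\circ = \theta_{n+2}^{\chi}(\phi_n^{(0)} J_{n,\chi}^\circ)$ encoded in the integral formula displayed just above the statement. The base $m = 1$ is immediate since $v_1 = \delta_1 = 1$, and the claim reduces to $J_{2,\chi}^\circ$ being supported on $S_2^\circ P_2(\oo) = P_2$, which follows directly from the definition of $\xi_2$ in (\ref{def:xin}) and Prop.~\ref{prop:Presheq}. For the inductive step I assume the support statement at level $n = 2m$ for every ramified character of conductor $e$, and deduce it at level $n+2$; both $\chi$ and $\nu^{-1}\chi$ share the conductor $e$, so the inductive hypothesis applies to the inner $J_{n,\chi}^\circ$ appearing in $J_n' = \phi_n^{(0)} J_{n,\chi}^\circ$.

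By Lemma~\ref{lem:RSSP} it suffices to evaluate $J_{n+2,\nu^{-1}\chi}^\circ(g)$ on matrices $g$ of the form (\ref{def:g_n+2}). The $y := \Vp^{-l}x \in \oo^m$ piece of the outer integral is already shown in the text to vanish, using ramification of $\chi$. For $y \notin \oo^m$, (\ref{eqv:xo}) forces $\tau_y^f \neq \emptyset$, so Prop.~\ref{prop:LST} decomposes $\bu_y = \ac{h}\, u_{y(\tau_y^f)^*}\, d_{y(\tau_y^f)}\, k$ with $h \in H_f$ and $k \in K_{m+1}^1$. After substituting into $\If_x$ and conjugating---absorbing $h$ into the $A$-variable over $P_m \bs G_m$ and $k$ into the right $P_{n+2}(\oo)$-invariance of the integrand---one arrives at the expression (\ref{eq:intIxfn}), in which the $J_n'$ factor is nonzero only when its argument lies in $S_n^\circ \ac v_m \ac \delta_m P_n(\oo)$ by the inductive hypothesis.

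Invoking the Shalika transformation (\ref{eq:propSh}) for $J_n'$ reduces this constraint to requiring $A d^{\circ -1}\Vp^f$ to lie in $\delta_m K_m$ modulo $P_m$, and simultaneously forces the bottom diagonal entry $\Vp^l y_0$ to be $\Vp^{me}$ times a unit. Performing the $A$-integral then collapses onto a single $P_m$-orbit, and the surviving contribution to the outer $x$-integral becomes an oscillatory integral $\int_{F^m} \psi(-\tr(x\,{}^t\beta))\, \chi(\text{monomial in $y$})\,\dr x$ taken over the thin set of $y$ whose $p$-adic profile $(o(y_{\s(i)}))_{i \in \tau_y^f}$ is forced to match $(e, 3e, \ldots, (2m-1)e)$. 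Orthogonality of $\psi$ together with ramification of $\chi$ then pin down $\beta$ modulo $\oo^m$ to be the last-column profile $(\Vp^{-me}, \Vp^{(1-m)e}, \ldots, \Vp^{-e})$ of $v_{m+1}\delta_{m+1}$. Assembling these conditions exhibits $g$ as lying in $S_{n+2}^\circ \ac v_{m+1} \ac \delta_{m+1} P_{n+2}(\oo)$ and shows the contribution is nonzero exactly there; the equality with $S_{n+2}^\circ \ac v_{m+1}^\sharp \ac \delta_{m+1}^\sharp P_{n+2}(\oo)$ follows from Remark~\ref{rem:ux} applied to $v_{m+1}$.

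The main obstacle will be the combinatorial bookkeeping of $\s_y$, $\tau_y^f$, and the diagonal $d_{y(\tau_y^f)}$ across the induction: the hypothesis only controls the image in a double coset, so extracting sharp $p$-adic constraints on the individual coordinates of $y$, $f$, and $\beta$ requires unwinding (\ref{eq:propSh}) one level down and exploiting that $A \mapsto \chi(A)^{-1}$ is an oscillatory character of conductor $e > 0$ at each layer. A related delicate point is checking that the off-diagonal coordinates $y(\tau_y^f)^*$ produced by Prop.~\ref{prop:LST} combine with the outer Fourier transform in $x$ to yield exactly the $v_{m+1}$-profile rather than a proper translate; the matrix identification behind the statement of Theorem~\ref{thm:main} serves as the cross-check that nothing else can contribute.
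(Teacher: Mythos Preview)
Your inductive strategy matches the paper's, and the reduction via Lemma~\ref{lem:RSSP}, the vanishing of the $y\in\oo^m$ piece, and the use of Prop.~\ref{prop:LST} to reach (\ref{eq:intIxfn}) are all as in the paper. However, there is a genuine gap at the step you describe as ``Performing the $A$-integral then collapses onto a single $P_m$-orbit.'' It does not collapse onto one orbit directly. The inductive hypothesis says only that the argument of $J_n'$ must lie in the double coset $S_n^\circ\,\ac v_m^\sharp\,\ac\delta_m^\sharp\,P_n(\oo)$; translating this into a pointwise constraint on $A$ and on $d^{\circ-1}\Vp^f$ is the content of the paper's Prop.~\ref{prop:Aform}, which you do not invoke. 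That proposition shows there is a Weyl-group parameter $w\in\Sf_m$ with $d^{\circ-1}\Vp^f\in(\delta_m^\sharp)^w K_m$, and that the contributing $A$'s are exactly those of the explicit form $A_w(t)$ of (\ref{def:Awt}) with $t\in(\oo^\tm)^m$. The $A$-integral then becomes a \emph{sum over $w\in\Sf_m$} of products of Gauss sums, and only the nonvanishing criterion (\ref{eq:GSsup}) forces a single $w$ to survive, via the relation (\ref{eq:swrel}). Without this $\Sf_m$-parametrization you cannot reduce $\If_x$ to the closed form $c\,\chi(\prod x_j)^{-1}$ that makes the final $\beta$-integral tractable.

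A second, smaller issue: your stated profile $(o(y_{\s(i)}))=(e,3e,\ldots,(2m-1)e)$ is incorrect---those are the values of $f_j$ (as a set), not of $o(y_{\s(j)})$. The paper's computation yields $o(y_{\s(j)})=-je$ (hence $l=me$) and $f_j=(2w(j)-1)e$, and then a second round of Gauss-sum orthogonality gives $o(\beta_j)=(w(j)-m-1)e$. Also, your claim that $\Vp^l y_0$ is $\Vp^{me}$ times a unit conflates two separate conclusions: Prop.~\ref{prop:Aform} first gives $\Vp^l y_0\in\oo^\tm$ (eq.~(\ref{eq:l-y0})), and only after the Gauss-sum analysis does one obtain $l=me$.
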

\nid
We prove this by induction.
The statement for $m =  1$ is obviously true.
Assuming the induction hypothesis that the statement for a particular $m$ is true, we will prove the statement for $m+1$ is true by computing $J_{n+2, \nu^{-1}\chi}^\circ(g)$.
For $w \in \Sf_m$, and $t = (t_1, \ldots, t_m), t_i \in \oo^\tm$, set
\begin{align}
A_w(t) = \begin{bmatrix}
1& & &  \\
& \ddots & & \\
& & 1&  \\
\Vp^{w_-(1)e}t_{1} & \cdots & \Vp^{w_-(m-1)e}t_{m-1} & \Vp^{w_-(m)e}t_{m}  \\
\end{bmatrix} \label{def:Awt}
\end{align} 
where $w_-: \{1, \ldots, m\} \to \{0, 1, \ldots,m-1\}$ is the map:  
\begin{align*}
j \longmapsto m-w(j).
\end{align*} 
\begin{prop}\label{prop:Aform}
There exists a unique $w \in \Sf_m$ such that $d_{}^{\circ -1} \Vp^{f} \in (\delta_m^\sharp)^w K_m$,  and $A \in P_m A_w(t)$ for some $t = (t_1, \ldots, t_m), t_i \in \oo^\tm$, if and only if the function (\ref{eq:intIxfn}) does not vanish at $A \in P_m \bs G_m$.
\end{prop}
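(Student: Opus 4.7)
The plan is to reduce the vanishing of (\ref{eq:intIxfn}) at $A\in P_m\bs G_m$ to that of $J_{n,\chi}^\circ$ at a specific $g\in P_n$ built from $A$ and $d^{\circ-1}\Vp^f$, invoke the inductive hypothesis (the $m$-case of Theorem \ref{thm:suppJ}) to characterize the nonvanishing locus, and then translate the resulting matrix equation into the combinatorial conditions of the proposition.

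For the reduction, since $\chi(A)^{-1}$ and the $\psi$-factor in (\ref{eq:intIxfn}) are nonvanishing, vanishing at $A$ is equivalent to $J_n'(M)=0$, where $M$ is its matrix argument. Proposition \ref{prop:welldefJW} ii) forces the $(n+1,n+1)$-entry $u_0:=\Vp^l y_0$ of $M$ to lie in $\oo^\tm$. Assume so. Right-multiplying $M$ by $\diag(1_n,u_0^{-1},1)\in P_{n+2}(\oo)$ yields $\ac{G}$ with $G=\begin{bmatrix} Ad^{\circ-1}\Vp^f & \\ & A\end{bmatrix}\in G_n$. Writing the Iwasawa decomposition $A=p_A k_A$ ($p_A\in P_m$, $k_A\in K_m$) and pulling $k_A$ to the right factors $\ac{G}=\ac{g}\cdot\diag(1_m,k_A,1_2)$ with $g=\begin{bmatrix} Ad^{\circ-1}\Vp^f & \\ & p_A\end{bmatrix}\in P_n$ and $\diag(1_m,k_A,1_2)\in P_{n+2}(\oo)$. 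Taking the $U_{n+1}U_{n+2}$-component of the decomposition $M=u\,\ac{g}\,c$ trivial, the defining formula of $\phi_n^{(0)}$ yields $J_n'(M)=J_{n,\chi}^\circ(g)$.

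By the inductive hypothesis, $J_{n,\chi}^\circ(g)\neq 0$ iff $g\in S_n^\circ\,\ac{v}_m^\sharp\ac{\delta}_m^\sharp\,P_n(\oo)$. Writing $g=s\cdot\ac{v}_m^\sharp\ac{\delta}_m^\sharp\cdot q$ with $s=\begin{bmatrix} A' & * \\ & A'\end{bmatrix}\in S_n^\circ$ and $q=\begin{bmatrix} Q_{11} & Q_{12} \\ Q_{21} & Q_{22}\end{bmatrix}\in P_n(\oo)$, comparing blocks forces $Q_{21}=0$, $Q_{22}\in P_m(\oo)$, $A'Q_{22}=p_A$, and the key identity
\begin{align*}
k_A d^{\circ-1}\Vp^f \;=\; Q_{22}^{-1}\,v_m^\sharp\delta_m^\sharp\,Q_{11},\qquad Q_{11}\in K_m,\ Q_{22}\in P_m(\oo).
\end{align*}
Nonvanishing of (\ref{eq:intIxfn}) at $A$ is thus equivalent to the solvability of this equation in $Q_{11}$ and $Q_{22}$.

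Finally, the Smith normal form of $v_m^\sharp\delta_m^\sharp$ equals $\delta_m^\sharp$ (eliminate the off-diagonal entries of the last row using the diagonal above), so the Cartan invariants of the left-hand side, which coincide with the multiset of $\Vp$-orders of the diagonal of $d^{\circ-1}\Vp^f$, must be $\{0,2e,\ldots,2(m-1)e\}$; equivalently $d^{\circ-1}\Vp^f\in(\delta_m^\sharp)^w K_m$ for a unique $w\in\Sf_m$. With $w$ determined, I would compare last rows of both sides of the key identity. Since $Q_{22}^{-1}\in P_m$ has last row $(0,\ldots,0,1)$, the right-hand side's last row equals that of $v_m^\sharp\delta_m^\sharp\cdot Q_{11}$, while the left-hand side's last row is (last row of $A$)$\cdot d^{\circ-1}\Vp^f$. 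A Gauss-like analysis via Proposition \ref{prop:LST} (or Remark \ref{rem:ux}) of the former, tracking the permutation-twist structure induced by $Q_{11}\in K_m$, forces the last row of $A$ to have entries of orders $w_-(1)e,\ldots,w_-(m)e$ with unit leading coefficients, i.e., $A\in P_m A_w(t)$. The principal obstacle is this final matching: one must carefully expand $v_m^\sharp\delta_m^\sharp Q_{11}$ via Proposition \ref{prop:LST} and verify that the permutation read off from the $\Vp$-orders of the resulting last row coincides with the $w$ already determined by the diagonal of $d^{\circ-1}\Vp^f$, yielding both the asserted $A\in P_m A_w(t)$ and the uniqueness of $w$.
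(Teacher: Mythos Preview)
Your reduction is correct and is exactly the paper's setup in different notation: with $p=A'$ and $\mu=A$, your key identity $k_A\,d^{\circ-1}\Vp^f=Q_{22}^{-1}v_m^\sharp\delta_m^\sharp Q_{11}$ is precisely the pair of conditions $p^{-1}\mu\in K_m$ and $(\delta_m^\sharp)^{-1}(v_m^\sharp)^{-1}p^{-1}\mu\,a\in K_m$ that the paper extracts from the induction hypothesis, and your Cartan argument for the existence and uniqueness of $w$ matches the paper's.

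The gap is in the final step. Comparing last rows alone does \emph{not} determine $A\bmod P_m$: write $a=d^{\circ-1}\Vp^f$ with $(a)_{jj}$ of order $2(w(j)-1)e$, and set $\alpha=$ last row of $k_A$, $\beta=(1,\Vp^e,\ldots,\Vp^{(m-1)e})Q_{11}$. The last-row identity reads $\alpha_j a_{jj}=\Vp^{(m-1)e}\beta_j$. As $Q_{11}$ ranges over $K_m$, $\beta$ ranges over \emph{all} primitive vectors in $\oo^m$; already for $m=3$, $w=\mathrm{id}$ and $\beta$ with $o(\beta_2)=0$ one obtains $o(\alpha_2)=0$, whereas the target $A_w(t)$ requires $o(\alpha_2)=e$. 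So the last-row equation underdetermines $\alpha$. Proposition~\ref{prop:LST} does not help here: it decomposes a single $\bar u_x$ in a specific double coset and says nothing about $v_m^\sharp\delta_m^\sharp Q_{11}$ for an unknown $Q_{11}\in K_m$.

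What the paper does instead is to use the \emph{full} matrix constraint. Setting $\kappa:=Q_{22}k_A\in K_m$, one has $(v_m^\sharp)^{-1}\kappa\in K_m\cap\delta_m^\sharp K_m((\delta_m^\sharp)^w)^{-1}$. The paper states and applies an elementary lemma describing this intersection entrywise (units on the $w$-permuted diagonal, prescribed $\p$-depth below), then reads off the last row of $\kappa=v_m^\sharp\cdot\bigl((v_m^\sharp)^{-1}\kappa\bigr)$ using the explicit last row of $v_m^\sharp$; since $Q_{22}\in P_m(\oo)$, this last row equals that of $k_A$ and hence of $A$. The paper also uses an explicit parametrization of $P_m\bs G_m$ to first exclude representatives involving the permutations $s_j$ (type~(\ref{pgtype2})), which your last-row viewpoint does not separate. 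To complete your argument you would need exactly this intersection lemma (or an equivalent entrywise analysis), not Proposition~\ref{prop:LST}.
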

\begin{proof}
The coset space $P_m \bs G_m$ is realized as the set of the following matrices:
\begin{align}
& \begin{bmatrix}
1_{m-1}&  \\
* & *
\end{bmatrix}, \label{pgtype1} \\
& \begin{bmatrix}
1_j & &  \\
& 1_{m-1-j}&  \\
* & & *
\end{bmatrix} s_j, \ j \in \{0, \ldots, m -2\}, \label{pgtype2}
\end{align}
where 
\begin{align}
s_j = \left[
\begin{array}{cccc}
1_{j} &   & & \\ 
   & 0 &  &1 \\
    & &1_{m-j-2} &  \\ 
  & 1  & & 0
\end{array}
\right]. \label{def:sj}
\end{align}  
Write $\{ \mu \}$ for this system.
Abbreviate 
\begin{align*}
a = d_{}^{\circ -1} \Vp^{f}, \ \bu = v_m^\sharp.
\end{align*}
Assume that the function (\ref{eq:intIxfn}) does not vanish at $A = \mu$.
By the definition of $J_n'$ and the induction hypothesis for $m$, there exists a $p \in P_{m}$ such that $\mu \in p K_m, \mu a \in p \bu \delta_m^\sharp  K_m$, equivalently, 
\begin{align}
p^{-1} \mu, (\delta_m^\sharp)^{-1} \bu^{-1} p^{-1} \mu a \in K_m. \label{eq:propp''v}
\end{align} 
In particular, we need the condition $a \in K_m \delta_m^\sharp K_m$.
By the Cartan decomposition, there exists a unique $w \in \Sf_m$ such that
\begin{align*}
a \in (\delta_m^\sharp)^w K_m.
\end{align*} 
Assume that $\mu$ is in the form of (\ref{pgtype2}).
Let $\mu' = \mu s_j^{-1}$.
Then, (\ref{eq:propp''v}) is equivalent to 
\begin{align*}
p^{-1} \mu', (\delta_m^\sharp)^{-1} \bu^{-1} p^{-1} \mu' (\delta_m^\sharp)^{s_j w} \in K_m.
\end{align*} 
Therefore, we need
\begin{align*}
\bu^{-1}  p^{-1} \mu' \in \delta_m^\sharp K_m ((\delta_m^\sharp)^{s_jw})^{-1}.
\end{align*}
But, by the lemma below, 
\begin{align*}
(\bu^{-1}  p^{-1} \mu')_{i,m-1} 
\in \begin{cases}
\oo & \mbox{if $i < s_jw(m-1)$} \\
\oo^\tm  & \mbox{if $i = s_jw(m-1)$} \\
\p^{2e(i- s_jw(m-1))} & \mbox{if $i > s_jw(m-1)$},
\end{cases}
\end{align*} 
and therefore, 
\begin{align*}
o((p^{-1} \mu')_{m,m-1}) &= o((\bu \bu^{-1} p^{-1} \mu')_{m, m-1})\\
 &= 2e(m- s_j w(m-1)).
\end{align*}
This is a contradiction since $(p^{-1} \mu')_{m,m-1}$ is zero.
Hence, we may assume that $\mu$ is in the form of (\ref{pgtype1}).
Observing the last raw of $p^{-1} \mu = \bu \bu^{-1} p^{-1} \mu$ with using the same lemma and the condition $\bu^{-1} p^{-1} \mu \in \delta_m^\sharp K_m ((\delta_m^\sharp)^{w})^{-1}$, we conclude that, for some $t = (t_1, \ldots, t_m), t_i \in \oo^\tm$, $\mu$ equals $A_w(t)$, i.e., 
\begin{align*}
 A \in P_mA_w(t).
 \end{align*}
The converse direction is obvious. 
\end{proof}
\begin{lem}
Let $f \in \Z^r$ with $f_1 < \cdots < f_r$.
If $k$ lies in $K_r \cap \Vp^f K_r ((\Vp^f)^w)^{-1}$ for $w \in \Sf_r$, then it holds that
\begin{align*}
k_{i,w^{-1}(i)} \in \oo^\tm, \ 
k_{i,j} \in \begin{cases}
\p^{f_i - f_{w(j)}} & \mbox{if $f_{w(j)} < f_i$} \\
\oo & \mbox{otherwise}. 
\end{cases}
\end{align*}
\end{lem}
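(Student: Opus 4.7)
The plan is to read the valuations of the entries of $k$ directly off the coset decomposition, and then to upgrade the diagonal entries $k_{i,w^{-1}(i)}$ to units by a determinantal argument.

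First, I would write $k = \Vp^f k'((\Vp^f)^w)^{-1}$ for some $k' \in K_r$. Under the paper's convention, $(\Vp^f)^w = w\Vp^f w^{-1} = \diag(\Vp^{f_{w(1)}},\ldots,\Vp^{f_{w(r)}})$, so entry-wise
\[ k_{ij} = \Vp^{f_i - f_{w(j)}}\, k'_{ij}. \]
Combined with $k'_{ij}\in\oo$ (since $k'\in K_r$) and $k_{ij}\in\oo$ (since $k\in K_r$), this immediately gives $k_{ij}\in\p^{f_i-f_{w(j)}}$ whenever $f_{w(j)}<f_i$, and $k_{ij}\in\oo$ otherwise. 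Taking $j=w^{-1}(i)$, so that $f_{w(j)}=f_i$, yields the weak statement $k_{i,w^{-1}(i)} = k'_{i,w^{-1}(i)} \in \oo$.

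To sharpen this to $k_{i,w^{-1}(i)}\in\oo^\tm$, I would expand
\[ \det k = \sum_{\sigma\in\Sf_r} \mathrm{sgn}(\sigma)\prod_i k_{i,\sigma(i)}, \]
and apply the entry-bound just established to obtain
\[ o\!\left(\prod_i k_{i,\sigma(i)}\right) \ge \sum_i \max(0,\, f_i - f_{w\sigma(i)}) = \tfrac{1}{2}\sum_i |f_i - f_{w\sigma(i)}|, \]
where the last equality uses the identity $\sum_i (f_i-f_{\tau(i)})=0$ valid for any permutation $\tau$ of $\{1,\ldots,r\}$. Because $f_1<\cdots<f_r$ is \emph{strictly} increasing, this right-hand side is strictly positive unless $w\sigma=\mathrm{id}$, i.e.\ $\sigma=w^{-1}$. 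Hence every term with $\sigma\ne w^{-1}$ lies in $\p$, and since $\det k\in\oo^\tm$, the remaining contribution at $\sigma=w^{-1}$ must already be a unit: $\prod_i k_{i,w^{-1}(i)}\in\oo^\tm$. Since each factor lies in $\oo$, each is individually in $\oo^\tm$.

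The lemma is largely bookkeeping; the only genuine input is the observation that among all $\sigma$, only $\sigma=w^{-1}$ avoids a strictly positive valuation penalty, so that the determinantal unit propagates to each entry on the $w^{-1}$-diagonal. The one potential pitfall is keeping the permutation convention $(\Vp^f)^w=w\Vp^f w^{-1}$ straight so that $w$ (and not $w^{-1}$) appears in the valuation bound, matching the statement of the lemma.
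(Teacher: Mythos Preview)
Your proof is correct. The paper itself offers no argument beyond ``By an elementary argument,'' and what you have written is precisely the natural way to unpack that: read off the entrywise valuation bounds from the factorization $k=\Vp^f k'((\Vp^f)^w)^{-1}$, then use the Leibniz expansion of $\det k\in\oo^\tm$ together with the strict monotonicity of $f$ to force the $w^{-1}$-diagonal entries to be units. Your handling of the conventions $h^g=ghg^{-1}$ and $(w\Vp^f w^{-1})_{ii}=\Vp^{f_{w(i)}}$ matches the paper's, and the identity $\sum_i\max(0,f_i-f_{\tau(i)})=\tfrac12\sum_i|f_i-f_{\tau(i)}|$ is the clean way to isolate $\sigma=w^{-1}$ as the unique term of valuation zero.
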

\begin{proof}
By an elementary argument.
\end{proof}
\nid
Put
\begin{align*}
\Jf_{w}(t) = \chi(A_w(t))^{-1}J_n'(\begin{bmatrix}
A_{w}(t) d_{}^{\circ -1}\Vp^f & & &  \\
&A_{w}(t) & & \\
& &  \Vp^l y_{0} &  \\
& & &1  \\
\end{bmatrix}).
\end{align*} 
Abbreviate $\Jf_w((1,\ldots,1))$ as $\Jf_w$.
Observe that
\begin{align}
\Jf_w((t_1,\ldots, t_m)) = \Jf_w \prod_{j=1}^{m} \chi^{-1}(t_j). \label{eq:propJCw}
\end{align} 
Since we assumed that $\Jf_{w}$ is nonzero, by Prop. \ref{prop:Aform} 
\begin{align}
l = - o(y_{0}). \label{eq:l-y0}
\end{align}
Using (\ref{eq:propJCw}), we compute 
\begin{align*}
\If_x &= \chi(y_0)^{-1} \sum_{w \in \Sf_{m}} \Jf_{w} \int_{(\oo^\tm)^m} \prod_{j=1}^{m}  \chi(t_j)^{-1} \psi\left(\sum_{j= 1}^{m} \frac{\dot{y}_{j}\Vp^{w_-(j)e} t_j}{y_{0}\Vp^l} \right) \dr t_1 \cdots \dr t_m \\
&= \chi(y_0)^{-1} \sum_{w \in \Sf_{m}} \Jf_{w} \prod_{j=1}^{m} \gf\left(\chi^{-1}, \psi_{\dot{y}_{j}\Vp^{w_-(j)e-l}/y_0}\right).
\end{align*}
Here $\psi_a$ for $a\in F^\tm$ indicates the additive character $x \mapsto \psi(ax)$, and $\gf(\chi^{-1},\psi)$ the Gauss sum defined by
\begin{align}
\gf(\chi^{-1},\psi) = \int_{\oo} \chi^{-1}(x) \psi(x) \dr x, \label{def:Gaussum}
\end{align} 
where we extend $\chi$ to $F$ by $\chi(0) = 0$, and the Haar measure is normalized so that $\vol(\oo) = 1$.
It is well-known that, for $\psi$ satisfying (\ref{ass:psi}) and ramified $\chi$, 
\begin{align}
\gf(\chi,\psi_a) \neq 0 \ (\Longleftrightarrow) \ o(a) = -e. \label{eq:GSsup}
\end{align} 
Therefore, the last product of Gauss sums is nonzero, if and only if   
\begin{align} 
o \left(\dot{y}(\tau)_{j}\right) = -(w_-(j)+1)e, \ \ j \in \{1, \ldots, m\}. \label{eq:oydot}
\end{align}
This implies that $y$ lies in $(F \setminus \oo)^{m}$.
Thus $\tau = \tau_y^f$ is the full-set, and $y(\tau)$ equals $y$.
By the definition of $\dot{y}$,
\begin{align}
\dot{y}_j = 
\begin{cases}
y_{0}& \mbox{if $j = \s(1)$} \\
y_{0}/y_{\s(\s^{-1}(j)-1)}& \mbox{otherwise.} 
\end{cases}
\label{eq:doty} 
\end{align} 
Combining with (\ref{eq:oydot}), we obtain an identity
\begin{align*}
\{ o(y_{0})\} \sqcup \{o(y_{0}) -o(y_{j}) \}_{j \neq 0, \s(m)} = \{ -e, \cdots, - m e\} 
\end{align*} 
as sets.
It follows that 
\begin{align*}
o(y_{\s(j)}) = -je, \ j \in \{1, \ldots, m\}.
\end{align*} 
Therefore,  
\begin{align}
\s^{-1}(j) + w_-(j) &= m, \ j \in \{1,\ldots, m\} \label{eq:swrel}
\end{align} 
by (\ref{eq:oydot}), and 
\begin{align*}
l = me
\end{align*} 
by (\ref{eq:l-y0}).
From the definition of $d(y)$, it follows that 
\begin{align*}
o(d_{j,j}^\circ) = e, \ j \in \{1,\ldots, m\}
\end{align*} 
Therefore, from Prop. \ref{prop:Aform}, it follows that  
\begin{align*}
f_j = (2w(j)-1)e, \ j \in \{1,\ldots, m\}.
\end{align*} 
This means that $\Jf_{w}$ for only one $w$ as above is nonzero, and other $\Jf_{w'}$ are zero.
Now, it follows from (\ref{eq:doty}), (\ref{eq:swrel}) that 
\begin{align*} 
\If_x &= \Jf_{w} \chi^{-1}(y_{0}) \gf(\chi^{-1}, \psi_{\Vp^{-e}}) \prod_{j=1}^{m-1} \gf\left(\chi^{-1}, \psi_{\frac{\Vp^{(m-j-1)e-l}}{y_{\s(j)}}}\right) \\
&= \Jf_{w} \chi^{-1}(y_{0}) \gf(\chi^{-1}, \psi_{\Vp^{-e}}) \prod_{j=1}^{m-1} \gf\left(\chi^{-1}, \psi_{\frac{\Vp^{(m-j-1)e}}{x_{\s(j)}}}\right).
\end{align*} 
We find that
\begin{align*}
\supp(\If_x) = \Vp^{(m-\s^{-1}(1))e} \oo^\tm \tm \cdots \tm \Vp^{(m-\s^{-1}(m))e} \oo^\tm,
\end{align*} 
and can express 
\begin{align*}
\If_x = c \chi(\prod_{j=1}^m x_j)^{-1}, c \neq 0 
\end{align*} 
by the identity
\begin{align}
\gf(\chi,\psi_{ab}) = \nu\chi(b)^{-1} \gf(\chi,\psi_{a}) ,a,b \in F^\tm. \label{eq:GSprop}
\end{align}
Therefore,  
\begin{align*}
& \int_{F^{m}} \psi(x {}^t\beta) \If_x \dr x = \int_{\supp(\If_x)} \psi(x {}^t\beta) \If_x \dr x \\
&= c\prod_{j=1}^m \int_{\Vp^{(m-\s^{-1}(j))e} \oo^\tm} \chi(x_j)^{-1} \psi(\beta_j x_j) \dr x_j \\
&= c \prod_{j =1}^m \nu\chi(\Vp)^{(m-\s^{-1}(j))e} \gf\left(\chi^{-1}, \psi_{\beta_j \Vp^{(m -\s^{-1}(j))e}}\right).
\end{align*} 
By (\ref{eq:GSsup}), the Gauss sums are nonzero, if and only if 
\begin{align*}
o(\beta_j) = (\s^{-1}(j) -m-1)e, \ \ j \in \{1,\ldots, m\}.
\end{align*} 
This condition is equivalent to 
\begin{align*}
o(\beta_j) = (w(j) -m-1)e, \ \ j \in \{1,\ldots, m\}
\end{align*} 
by (\ref{eq:swrel}).
This establishes the induction step, completing the proof for the theorem.

Next, consider the case where $\chi$ is unramified.
Let 
\begin{align*}
P_n^*&= \{p \in P_n \mid \det(p) \in \oo^\tm\} \\
P_n^*(\oo)&= P_n^* \cap P_n(\oo) \\
S_n^{\circ *}&= S_n \cap P_n^*.
\end{align*} 
We will prove:
\begin{thm}\label{thm:unrpsi}
If $\chi$ is unramified, then the support of $J_{n,\chi}^\circ|_{P_n^*}$ is $(S_n^{\circ *} \cap P_n^*) P_n^*(\oo)$.
\end{thm}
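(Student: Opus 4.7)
The plan is to follow the inductive proof of Theorem \ref{thm:suppJ} specialized to $e = 0$, with the restriction to $P_n^*$ compensating for the weaker Gauss sum constraints. The containment $S_n^{\circ *}P_n^*(\oo) \subset \supp(J_{n,\chi}^\circ|_{P_n^*})$ is immediate from the left $S_n^\circ$-quasi-invariance and right $P_n(\oo)$-invariance of $J_{n,\chi}^\circ$ together with $J_{n,\chi}^\circ(1_n) \neq 0$; the latter follows because iii) of Prop. \ref{prop:welldefJW}, applied iteratively, gives nonvanishing at $\eta^{-1} \in P_n(\oo)$, and right $P_n(\oo)$-invariance propagates this to $1_n$. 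The substantive content is thus the reverse inclusion.

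I would induct on $m$. The base case $m = 1$ is immediate: $J_{2,\chi}^\circ = \xi_2$ has support $N_2 P_2(\oo)$, which coincides with $S_2^{\circ *} P_2^*(\oo)$ since $S_2^\circ = N_2$ and $P_2^*(\oo) = P_2(\oo) \cap P_2^*$. For the inductive step, given $g \in P_{n+2}^*$, Lemma \ref{lem:RSSP} moves $g$ into the form (\ref{def:g_n+2}), and $g \in P_{n+2}^*$ then imposes the additional unit-determinant constraint $\sum_i f_i + l = 0$. I would evaluate $J_{n+2,\nu^{-1}\chi}^\circ(g) = \int_{F^m}\psi(-\tr(x{}^t\beta))\If_x \dr x$ exactly as in the derivation preceding Theorem \ref{thm:suppJ}, but now with $e = 0$, so that $\delta_m^\sharp = 1_m$, $A_w(t) = A(t)$ is independent of $w \in \Sf_m$, and the Cartan condition of Prop. \ref{prop:Aform} collapses to $d^{\circ -1}\Vp^f \in K_m$ modulo $P_m$. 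Invoking the induction hypothesis on $J_n'|_{P_n^*}$ and combining it with $\sum_i f_i + l = 0$ forces $f = 0$ and $l = 0$, after which a final Gauss sum integration in the $\beta$-variable restricts $\beta$ to $\oo^m$, placing $g$ in $P_{n+2}^*(\oo) \subset S_{n+2}^{\circ *} P_{n+2}^*(\oo)$.

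The hard part will be the Gauss sum analysis. In the ramified case, (\ref{eq:GSsup}) pins down valuations tightly: $\gf(\chi^{-1},\psi_a) \neq 0$ forces $o(a) = -e$. For unramified $\chi$ this mechanism fails, since $\gf(\chi^{-1},\psi_a)$ can be nonzero on a much wider range of $a$. The rigid conclusions of Theorem \ref{thm:suppJ} must therefore be recovered not from individual Gauss sum nonvanishing but from the interplay between the unit-determinant equation $\sum f_i + l = 0$ and the induction hypothesis's restriction of $\supp(J_n')$ via $S_n^{\circ *} P_n^*(\oo)$. Wherever the ramified proof selected specific valuations via (\ref{eq:GSsup}), here the analogous selection must come from requiring that the interior matrix in the definition of $\If_x$ be $P_n^*$-admissible under the induction hypothesis and matching that with the determinant equation; the bookkeeping of $p$-adic valuations then parallels the ramified case but draws its tightness from the determinant rather than from the Gauss sum support.
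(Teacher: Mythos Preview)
Your plan follows the ramified argument of Theorem~\ref{thm:suppJ} too closely, and this creates a genuine gap. Two specific places where the ramified argument breaks down are not handled by your proposed substitute of determinant-plus-induction-hypothesis. First, in the ramified computation the case $y = \Vp^{-l}x \in \oo^m$ is dismissed because the integral $\int_{P_m\backslash G_m}\chi(A)^{-1}J_n'(\cdots)\,\dr A$ vanishes \emph{since $\chi$ is ramified}; for unramified $\chi$ this integral is typically nonzero and you give no argument for it. Second, Proposition~\ref{prop:Aform} is proved using the ramified induction hypothesis (support equals $S_n^\circ \ac{v}_m^\sharp\ac{\delta}_m^\sharp P_n(\oo)$), so you cannot invoke it when your induction hypothesis concerns only $J_{n,\chi}^\circ|_{P_n^*}$ and $\delta_m^\sharp = 1_m$. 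Your hope that the constraint $\sum f_i + l = 0$ combined with the induction hypothesis on $P_n^*$ will force $f = 0$, $l = 0$ is not substantiated: the matrices arising in $\If_x$ need not lie in $P_n^*$, so the induction hypothesis gives no direct grip.

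The paper's approach is structurally different and much simpler. It first disposes of all $f \neq 0$ with $\sum f_i = 0$ by an elementary observation (Proposition~\ref{prop:chiurfnonzeo}): since some $f_i < 0$, one finds $h \in S_n^\circ$ with $g_0^{-1}hg_0 \in P_n(\oo)$ but $\chi_\psi(h) \neq 1$, so $J(g_0) = 0$ by the trivial Lemma~\ref{lem:vanishlem}. This requires no integral computation at all. With $f = 0$, $l = 0$ thus secured in advance, the remaining computation at $\ac{u}_\beta$ is short and uses no Gauss sums: for $x \notin \oo^m$ one gets zero directly from Proposition~\ref{prop:LST} and Proposition~\ref{prop:welldefJW}~ii) (the entry $x_{\s_x(m)} \notin \oo$ lands in a forbidden position); for $x \in \oo^m$ the induction hypothesis gives the constant $J_{n,\chi}^\circ(1_n)$, and the final $\int_{\oo^m}\psi(x\,{}^t\beta)\,\dr x$ is the characteristic function of $\beta \in \oo^m$. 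You should replace the Gauss-sum bookkeeping by this direct vanishing argument for $f \neq 0$.
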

\nid
The proof of the next lemma is similar to that of Lemma \ref{lem:RSSP}, and omitted.
\begin{lem}
Any $(S_{n}^{\circ*}, P_{n}^*(\oo))$-double coset in $P_{n}^*$ contains a matrix in the form of the embedding into $G_{n}$ of   
\begin{align*}
u_\beta \Vp^f, \ \beta \in F^{m-1}, \ f \in \Z^{m}, \sum_{i=1}^{m} f_i =0.
\end{align*} 
\end{lem}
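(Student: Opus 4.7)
The plan is to mirror the proof of Lemma \ref{lem:RSSP}, proceeding by Iwasawa, then Shalika reduction, then a Cartan-style step, with the extra care of tracking the $*$-constraint on determinants throughout.

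First, using the semi-direct product $P_n = \ac{G}_{n-1} \ltimes U_n$ and the Iwasawa decomposition on $G_{n-1}$, I would show that every right $P_n^*(\oo)$-coset in $P_n^*$ meets $B_n \cap P_n^* =: B_n^*$. Writing $p = \ac{g} u$ with $g \in G_{n-1}^*$ and $u \in U_n$, and then $g = bk$ with $b \in B_{n-1}$, $k \in K_{n-1}$, one has $\det b = \det g / \det k \in \oo^\tm$. After sliding $\ac{k}$ past $u$ (using that $\ac{G}_{n-1}$ normalizes $U_n$), we get $p = (\ac{b} u') \cdot \ac{k}$ with $\ac{b} u' \in B_n^*$ and $\ac{k} \in P_n^*(\oo)$.

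Next, I would apply the Shalika left action to reduce $B_n^*$ to the image $\ac{B}_m$. Decomposing $b' \in B_n^*$ as $\begin{bmatrix} B_1 & B_2 \\ 0 & B_3 \end{bmatrix}$ in $m \times m$ blocks, the condition $(b')_{nn}=1$ forces $B_3 \in P_m$. Left-multiplying by $s_1 = \begin{bmatrix} 1_m & -B_2 B_3^{-1} \\ 0 & 1_m \end{bmatrix} \in S_n^{\circ*}$ annihilates the upper-right block, and then by $s_2 = \begin{bmatrix} B_3^{-1} & 0 \\ 0 & B_3^{-1} \end{bmatrix}$ normalizes the lower-right to $1_m$; the required $*$-constraint $\det B_3^{-1} \in \oo^\tm$ is arranged by transferring valuation between the two blocks through a compensating right multiplication by an element of $P_n^*(\oo)$, which is available since $\det B_1 \det B_3 \in \oo^\tm$ globally. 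This yields a representative $\ac{b''}$ with $b'' \in B_m$ and $\det b'' \in \oo^\tm$. Decomposing $b'' = u \cdot d$ with $u \in N_m$ and $d \in D_m$, the right action of $\ac{K}_m \cap P_n^*(\oo)$ absorbs the unit diagonal part of $d$ and all off-diagonal entries of $u$ outside its last column, leaving exactly $u_\beta$ with $\beta \in F^{m-1}$ and $\Vp^f$ with $\sum f_i = 0$ forced by $\det b'' \in \oo^\tm$.

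The principal technical obstacle lies in the Shalika normalization: making $s_2 \in S_n^{\circ*}$ requires $\det B_3 \in \oo^\tm$, which is not automatic from $\det b' \in \oo^\tm$. Unlike Lemma \ref{lem:RSSP}, where no determinantal restriction on Shalika elements appears, one must carefully shuffle valuation between the two block determinants using a compensating $P_n^*(\oo)$-action. This is the new ingredient beyond the analogous argument and accounts for the constraint $\sum f_i = 0$ in the final form.
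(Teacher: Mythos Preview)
Your plan mirrors the paper's intended argument (the proof is omitted there, with only the remark that it is similar to that of Lemma~\ref{lem:RSSP}), and you are right to isolate the new obstacle: the Shalika element $s_2$ normalizing the lower-right block lies in $S_n^{\circ*}$ only when $\det B_3 \in \oo^\tm$, which does not follow from $\det b' \in \oo^\tm$.

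However, your proposed fix---arranging $\det B_3 \in \oo^\tm$ by ``transferring valuation between the two blocks through a compensating right multiplication by an element of $P_n^*(\oo)$''---is a genuine gap, and in fact the obstacle is fatal to the statement as written. Take $m=2$ and $b' = \diag(\Vp^{-1},1,\Vp,1) \in P_4^*$. For any $s \in S_4^{\circ*}$ (with diagonal block $a = \left[\begin{smallmatrix} \alpha & * \\ 0 & 1 \end{smallmatrix}\right]$, $\alpha \in \oo^\tm$) and any $g = u_\gamma\Vp^f$ with $f_1+f_2=0$, the matrix $\ac{g}^{-1}sb'$ is block upper triangular, and its upper-left $2\times 2$ block equals
\[
g^{-1}\,a\,\diag(\Vp^{-1},1) \;=\; \begin{bmatrix} \alpha\,\Vp^{-f_1-1} & * \\ 0 & \Vp^{-f_2} \end{bmatrix}.
\]
Integrality of the diagonal entries forces $f_1 \le -1$ and $f_2 \le 0$, contradicting $f_1+f_2=0$. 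Hence $S_4^{\circ*}\,b'\,P_4^*(\oo)$ contains no element of the form $\ac{u_\gamma\Vp^f}$ with $\sum f_i = 0$, so no shuffling by $P_n^*(\oo)\subset K_n$ can rescue your argument. What is actually used downstream (for Theorem~\ref{thm:unrpsi}) is only that every $p \in P_n^*$ lies in some $S_n^\circ\,\ac{u_\beta\Vp^f}\,P_n(\oo)$, i.e.\ the reduction with the full, non-starred groups; your Iwasawa--Shalika--Cartan outline establishes that without the problematic constraint, but it does not prove the lemma as stated.
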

\nid
For the matrices in this lemma, we have:
\begin{prop}\label{prop:chiurfnonzeo}
Let $J$ be a $P_n(\oo)$-invariant pre-Shalika form.
Take a nonzero $f \in \Z^m$ such that $\sum_{i=1}^m f_i = 0$.
Then, $\ac{J}(u_\beta \Vp^f)$ vanishes for an arbitrary $\beta \in F^m$.
\end{prop}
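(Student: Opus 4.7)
The plan is to bypass the inductive Gauss-sum analysis of Theorem \ref{thm:suppJ} entirely and instead exploit only the two equivariance properties of $J$ directly. The key principle is: whenever $s \in S_n^\circ$ and $k \in P_n(\oo)$ satisfy $sg = gk$, the pair of identities $J(sg) = \chi_\psi(s) J(g)$ and $J(gk) = J(g)$ force $\chi_\psi(s) J(g) = J(g)$, so $J(g) = 0$ unless $\chi_\psi(s) = 1$. Setting $g = \ac{u_\beta \Vp^f}$, the whole proposition reduces to producing a single $s \in S_n^\circ$ with $g^{-1} s g \in P_n(\oo)$ and $\chi_\psi(s) \ne 1$.

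Since $f \ne 0$ and $\sum_i f_i = 0$, some index $i_0$ satisfies $f_{i_0} \le -1$. I would take
\[
s \;=\; \begin{bmatrix} 1_m & c\, u_\beta \Vp^f E_{i_0,i_0} \\ 0 & 1_m \end{bmatrix},
\]
where $c \in \oo$ is a parameter to be chosen and $E_{i_0, i_0}$ is the matrix unit. The $a$-block equals $1_m \in P_m$, so $s$ automatically lies in $S_n \cap P_n = S_n^\circ$. A direct block multiplication yields
\[
g^{-1} s g \;=\; \begin{bmatrix} 1_m & c E_{i_0, i_0} \\ 0 & 1_m \end{bmatrix},
\]
which sits in $P_n(\oo)$ for every $c \in \oo$, regardless of $\beta$ and of the chosen $i_0$; this takes care of the containment requirement.

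It remains to pick $c$ so that $\chi_\psi(s) \ne 1$. Since the diagonal block is $1_m$, the $\chi$-contribution is trivial, and because $u_\beta \Vp^f$ is upper triangular with diagonal $(\Vp^{f_1}, \ldots, \Vp^{f_m})$, one finds
\[
\chi_\psi(s) \;=\; \psi\bigl(c\, \tr(u_\beta \Vp^f E_{i_0,i_0})\bigr) \;=\; \psi\bigl(c\, \Vp^{f_{i_0}}\bigr).
\]
As $c$ ranges over $\oo$, the map $c \mapsto \psi(c\,\Vp^{f_{i_0}})$ is a character of $\oo$ of conductor $-f_{i_0} \ge 1$, nontrivial by the standing hypothesis $\psi(\oo) = \{1\} \ne \psi(\p^{-1})$. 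Any $c \in \oo$ making this character $\ne 1$ concludes the argument.

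I do not anticipate any substantive obstacle. The feature making this argument sharply shorter than the ramified case is that the unramified $\chi$ becomes invisible once the $a$-block is set to $1_m$, so the additive character $\psi$ carries the entire weight; neither an induction on $m$, nor the Cartan decomposition, nor Gauss-sum manipulations are needed. The only things to verify are the two block multiplications above and the routine fact that $\psi(\Vp^{f_{i_0}}\cdot)$ is a nontrivial character on $\oo$ whenever $f_{i_0} < 0$.
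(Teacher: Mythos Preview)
Your proof is correct and is essentially the paper's own argument made explicit: the paper's proof reads in full ``Some $f_i$ is negative by our assumption. The assertion follows from the next lemma,'' where the next lemma (Lemma~\ref{lem:vanishlem}) is exactly the vanishing principle you state as your ``key principle.'' You have simply written down the specific $s$ that the paper leaves to the reader, and verified the block computations. (One cosmetic point: in the paper $\beta$ lies in $F^{m-1}$ so that $u_\beta \Vp^f \in G_m$; your argument is unaffected since only the diagonal entry $(u_\beta \Vp^f)_{i_0,i_0}=\Vp^{f_{i_0}}$ is used.)
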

\begin{proof}
Some $f_i$ is negative by our assumption.
The assertion follows from the next lemma.
\end{proof}
\begin{lem}\label{lem:vanishlem}
Let $\Omega$ be a field.
Let $G$ be a group, and $H, K$ be its subgroups.
Let $\xi$ and $\w$ be homomorphisms into $\Omega^\tm$ of $H$ and $K$, respectively.
Let $J$ be a $\Omega$-valued function on $G$ such that 
\begin{align*}
J(hgk) = \xi(h)\w(k) J(g), \ \ h \in H, g \in G, k \in K.
\end{align*} 
Then $J(g_0)$ vanishes at $g_0 \in G$, if there exists an $h \in H$ such that $h^{g_0} \in K$ and $\xi(h) \neq \w(g_0^{-1} h g_0)$.
\end{lem}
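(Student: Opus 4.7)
The plan is to prove the statement by comparing two applications of the bi-transformation property of $J$ to a single group element. Setting $k := g_0^{-1} h g_0$, the conjugation hypothesis (read in the form needed by the character comparison $\w(g_0^{-1} h g_0)$ appearing in the statement) places $k$ in $K$, so that $h g_0 = g_0 k$ as an identity in $G$. Applying the left-$H$ equivariance to the left-hand side and the right-$K$ equivariance to the right-hand side yields the chain
\[
\xi(h)\, J(g_0) \;=\; J(h g_0) \;=\; J(g_0 k) \;=\; \w(k)\, J(g_0) \;=\; \w(g_0^{-1} h g_0)\, J(g_0),
\]
so that $\bigl(\xi(h) - \w(g_0^{-1} h g_0)\bigr) J(g_0) = 0$ in $\Omega$. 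Since $\Omega$ is a field and the hypothesis supplies $\xi(h) \neq \w(g_0^{-1} h g_0)$, the factor in front is a unit, forcing $J(g_0) = 0$.

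There is really no obstacle: the entire argument is a one-line manipulation using the defining bi-equivariance of $J$ on the element $hg_0 = g_0 k$. The only care required is a small notational bookkeeping step, namely translating the conjugation condition written as $h^{g_0} \in K$ in the statement into the assertion $g_0^{-1} h g_0 \in K$ that is actually needed to rewrite $h g_0$ in the form $g_0 k$; this is precisely the element whose $\w$-value appears in the hypothesis on the characters, so the two conditions match up consistently.
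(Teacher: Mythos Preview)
Your argument is correct and is precisely the obvious computation the paper has in mind; the paper's own proof consists of the single word ``Obvious.'' Your remark about reading the conjugation hypothesis as $g_0^{-1} h g_0 \in K$ (rather than $g_0 h g_0^{-1}$, which is what the paper's convention $h^{g_0}=g_0hg_0^{-1}$ would literally give) is well taken: the appearance of $\w(g_0^{-1} h g_0)$ in the statement forces this reading, and with it the proof is the one-line identity you wrote.
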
 
\begin{proof}
Obvious.
\end{proof}
\nid
Therefore, for the computation of $\supp(J_{n,\chi}^\circ|_{P_n^*})$, it suffices to examine the values at $\ac{u}_\beta$ for $\beta \in F^{m-1}$.
We will prove the theorem by induction on $m$.
The statement for $m =  1$ is obviously true.
Assuming the induction hypothesis that the statement for a particular $m$ is true, we will prove the statement for $m+1$ is true by computing $J_{n+2,\nu^{-1}\chi}^\circ(\ac{u}_\beta)$ for $\beta \in F^m$.
In this situation, $\If_x$ (see (\ref{def:Ix}) for the definition) equals
\begin{align*}
\int_{P_m \bs G_{m}} \chi(A)^{-1} J_n'(\begin{bmatrix}
A & & &  \\
&A & & \\
& x & 1 &  \\
& & &1  \\
\end{bmatrix}) \dr A.
\end{align*} 
Assume that $x \in F^m$ does not lie in $\oo^m$.
Abbreviate $\tau_x^0$ as $\tau$.
By Prop. \ref{prop:LST}, there exist $h \in K^1_m$ and $k \in K^1_{m+1}$ such that $\bu_{x} \in \ac{h} \bu(x(\tau)^*) d(x(\tau))^{-1}k$.
Using this expression, we compute
\begin{align*}
\If_x &= \int \chi(A)^{-1} J_n'(
\begin{bmatrix}
A& &  \\
&A h&  \\
& & 1_2
\end{bmatrix}
\begin{bmatrix}
1& &  \\
&u_{x(\tau)^*} d_{x(\tau)} &  \\
& & 1
\end{bmatrix}) \dr A \\
&= \int \chi(A)^{-1} J_n'(
\begin{bmatrix}
A h^{-1} & &  \\
&A &  \\
& &1_2 
\end{bmatrix}
\begin{bmatrix}
1& &  \\
&u_{x(\tau)^*} d_{x(\tau)} &  \\
& & 1
\end{bmatrix}) \dr A \\
&= \int \chi(A)^{-1} J_n'(
\begin{bmatrix}
A & &  \\
&A &  \\
& &1_2 
\end{bmatrix}
\begin{bmatrix}
1& &  \\
&u_{x(\tau)^*} d_{x(\tau)} &  \\
& & 1
\end{bmatrix}) \dr A
\end{align*} 
where the integrals in $A$ are over $P_m \bs G_{m}$. 
But, $x(\tau)_{\s_{x(\tau)}(n)} = x_{\s_x(n)}$ does not lie in $\oo$ by (\ref{eqv:xo}).
Therefore, the integrated function vanishes by Prop. \ref{prop:welldefJW}.
If $x$ lies in $\oo^m$, then 
\begin{align*}
J_n'(\begin{bmatrix}
A & & &  \\
&A & & \\
& x & 1 &  \\
& & &1  \\
\end{bmatrix}) = 
J_n'(\begin{bmatrix}
A & & &  \\
&A & & \\
& & 1 &  \\
& & &1  \\
\end{bmatrix}).
\end{align*} 
Assume that this is nonzero at $A$.
Then, there exist $\ac{\Vp}^f, f \in \Z^m$ at which $J_n'$ does not vanish, and $p \in P_m, k,k' \in K_m$ such that 
\begin{align*}
A = p \Vp^f k = p k'.
\end{align*} 
Therefore, $o(f_1) = \cdots = o(f_m) = 0$.
By the induction hypothesis, in this situation, the above function takes $J_n'(1_{n+2})$.
By the construction, $J_n'(1_{n+2}) = J_{n,\chi}^\circ(1_n)$.
Therefore, 
\begin{align*}
\If_x = \If_0 = \int_{P_m \bs P_mK_m} J_n'(\begin{bmatrix}
A & & &  \\
&A & & \\
& & 1 &  \\
& & &1  \\
\end{bmatrix}) \dr A = J_{n,\chi}^\circ(1_n).
\end{align*} 
We have 
\begin{align*}
J_{n+2,\nu^{-1} \chi}^\circ(\ac{u}_\beta) 
&= \int_{\oo^m} \psi(x{}^t\beta)\If_0 \dr x \\
&= \begin{cases}
J_{n,\chi}^\circ(1_n) & \mbox{if $\beta$ lies in $\oo^m$} \\
 0& \mbox{otherwise.} 
\end{cases}
\end{align*} 
This establishes the induction step, and completes the proof.

By the argument of \cite{S}, if an unramified irreducible representation $\pi$ of $G_{n}$ has a Shalika model relevant to $\1$,  its unique $K_{n}$-invariant Shalika form does not vanish at the identity.
His argument works also for the case where $\pi$ and $\chi$ are both unramified.
Therefore, by the argument for the above proposition, we obtain:
\begin{prop}
Let $d < n$ be even integers.
If an unramified $\pi \in \Ir(G_{d})$ has a Shalika model relevant to $\chi$ (unramified by Prop. \ref{prop:genPSur}), then, there exists a $P_{n}(\oo)$-invariant vector in $\Ib_{\pi_{n}}(\nu^{(d-n)/2} \chi)$ which does not vanish at the identity.
\end{prop}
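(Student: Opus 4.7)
My plan is to mimic the inductive construction used in the proof of Theorem \ref{thm:unrpsi} (thm:unrpsi), but starting from the $K_d$-invariant Shalika form of $\pi$ rather than from the seed $\xi_2$ of Section \ref{subsec:spsf}. Let $J^{new} \in \Sb_\pi(\chi)$ be the Shalika newform of $\pi$. By the cited extension of the argument of \cite{S} to unramified $\chi$ (stated in the paragraph preceding the proposition), $J^{new}$ is $K_d$-invariant and $J^{new}(1_d) \neq 0$. Set $J^\circ := J^{new}|_{P_d}$. Since $K_d \supset P_d(\oo)$, $J^\circ$ is $P_d(\oo)$-invariant; and $J^\circ(1_d) = J^{new}(1_d) \neq 0$. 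By Prop.\ \ref{prop:SPS} (applied in the generic unramified case), $J^\circ$ is a pre-Shalika form of $\pi$ relevant to $\chi$.

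Next I iterate the pair $\theta\circ\phi^{(0)}$: for $j=1,\dots,(n-d)/2$, define
\begin{align*}
\tilde J^{(d+2j)} \;:=\; \theta_{d+2j}^{\,\nu^{-j+1}\chi} \circ \phi_{d+2j-2}^{(0)}\bigl(\tilde J^{(d+2j-2)}\bigr),
\qquad \tilde J^{(d)} := J^\circ.
\end{align*}
The $\phi^{(0)}$ construction (defined in Section \ref{subsec:spsf}) produces $C_{r+2}(0) = P_{r+2}(\oo)$-invariant vectors in the lift $\Ib_\tau(\chi)_{r+2}$ from $P_r(\oo)$-invariant input. The map $\theta_{r+2}^{\,\chi'}$ is defined by left-integration over $P_m\backslash G_m \ltimes F^m$, so it commutes with right-translation and thus preserves $P_{r+2}(\oo)$-invariance. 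Moreover, by Prop.\ \ref{prop:redG}, the pre-Shalika model of $\pi_{d+2j}$ relevant to $\nu^{-j}\chi$ is one-dimensional (its dimension matches that of the Shalika model of $\pi$ relevant to $\chi$, which is one by assumption), so $\tilde J^{(d+2j)}$ lies in $\Ib_{\pi_{d+2j}}(\nu^{-j}\chi)$.

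The crucial step is to propagate non-vanishing at the identity through the recursion. Repeating the unramified computation in the proof of Theorem \ref{thm:unrpsi} verbatim but with the input $J^\circ$ in place of the seed (and at each level with $\tilde J^{(d+2j-2)}$ as the relevant ``$J^\circ_{n,\chi}$''), one finds that for $\beta \in \oo^m$,
\begin{align*}
\tilde J^{(r+2)}\bigl(\ac{u}_\beta\bigr) \;=\; \tilde J^{(r)}(1_r)\cdot\vol(P_m\backslash P_m K_m(0)),
\end{align*}
the key input being that (i) $\If_x$ vanishes for $x\notin\oo^m$ (the same vanishing-by-Prop.\ \ref{prop:welldefJW} argument), (ii) for $x\in\oo^m$ the integrand is independent of $x$ by $P_r(\oo)$-invariance of $\tilde J^{(r)}$, and (iii) the $A$-integral collapses by the induction hypothesis onto the value at $1_r$. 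Setting $\beta=0$ and iterating $(n-d)/2$ times yields $\tilde J^{(n)}(1_n) = c\, J^{new}(1_d)$ for a nonzero constant $c$, giving the desired $P_n(\oo)$-invariant pre-Shalika form of $\pi_n$ relevant to $\nu^{(d-n)/2}\chi$ that does not vanish at the identity.

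The main obstacle is the faithful transcription of the identity-value computation from Section \ref{subsec:spsf}: one must check that the cancellations that worked for the canonical seed $\xi_2$ work equally well when the seed is an arbitrary $P_d(\oo)$-invariant pre-Shalika form, and that the intermediate representations $\pi_{d+2j}$ indeed admit the requisite one-dimensional pre-Shalika models so that the formula $\theta_{r+2}\circ\phi_r^{(0)}$ stays inside the right line at each step. Both points reduce to Prop.\ \ref{prop:redG} combined with the unramifiedness of $\chi$; once these are in hand, the recursive computation is routine.
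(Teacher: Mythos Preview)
Your overall strategy matches the paper's: seed with the $K_d$-invariant Shalika form of $\pi$ (nonzero at the identity by the Sakellaridis argument) and then climb from $d$ to $n$ by iterating the unramified computation from the proof of Theorem~\ref{thm:unrpsi}. However, your first step has a genuine gap that the appeal to one-dimensionality does not repair.

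The map $\phi_d^{(0)}$ of Section~\ref{subsec:spsf} is designed for the functor $\Psi^2$: it sends a $P_d(\oo)$-invariant vector of a $P_d$-module to a vector in its $\Psi^2$-lift, with $U_{d+1}$ and $U_{d+2}$ both acting through $\psi$. But $\pi_{d+2}=\Psi\circ\Upsilon(\pi)$, not $\Psi^2$ of anything: via $\Upsilon$ the group $U_{d+1}$ acts \emph{trivially}. Consequently $\phi_d^{(0)}(J^{new}|_{P_d})$ lives in (a subquotient of) $\Psi^2(\pi|_{P_d})$, and by Frobenius reciprocity there is no nonzero $P_{d+2}$-map from that to $\pi_{d+2}$, since such a map would have to intertwine $\psi|_{U_{d+1}}$ with $\mathbf{1}|_{U_{d+1}}$. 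Thus your $\tilde J^{(d+2)}$, and hence each $\tilde J^{(d+2j)}$, is a pre-Shalika form of the wrong $P$-representation. The one-dimensionality in Prop.~\ref{prop:redG} only says the map \emph{out of} $\pi_{d+2j}$ is unique up to scalar; it does not say every function in $\Ib^{d+2j}(\nu^{-j}\chi)$ with the right invariance lies in the image of that map. (Your citation of Prop.~\ref{prop:SPS} is also off: that result is about the Kirillov piece $\psi_d$ under assumption~(\ref{ass:J1nz}), not about unramified $\pi$, whose restriction to $P_d$ is far from $\psi_d$.)

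The fix is to handle the first step with $\Upsilon$ rather than $\Psi$: keep the \emph{full} Shalika form $J^{new}$ on $G_d$ (not its restriction to $P_d$), extend it to the natural $P_{d+2}(\oo)$-fixed vector of $\pi_{d+2}$ by letting $U_{d+1}$ act trivially and $U_{d+2}$ through $\psi$, and then apply the $G_d$-analogue of $\theta$ supplied (implicitly) by Lemma~\ref{lem:inGchi}. The value at $1_{d+2}$ is then computed exactly as in the unramified part of the proof of Theorem~\ref{thm:unrpsi}, and from $P_{d+2}$ onward your $\theta\circ\phi^{(0)}$ recursion and nonvanishing argument go through as written.
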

\section{Whittaker model}\label{sec:Wm} 
For $J \in \Ib_{\psi_{n}}(\chi)$, define the function $\Xi_{n}J$ on $P_{n}$ by
\begin{align*}
\Xi_{n}J(p) 
&= \int_{F^{m-1}} \int_{F^{m-1}} \psi(z_{m-1}) J(\ac{h}_{n-1}^{-1}\begin{bmatrix}
1_{m-1}&& {}^t y&  \\
& 1_{m-1}& {}^t z & \\
&  & 1&  \\
& & & 1 \\
\end{bmatrix}p) \dr y \dr z \\
&= \int_{F^{m-1}} \int_{F^{m-1}} \psi(z_{m-1}) J(\begin{bmatrix}
1_{m-1}&{}^t y & &  \\
& 1& & \\
& {}^t z & 1_{m-1}&  \\
& & & 1 \\
\end{bmatrix}h_{n-1}^{-1} p) \dr y \dr z. 
\end{align*} 
where $h_{n-1} \in \Sf_{n-1}$ indicates the permutation matrix defined at (\ref{def:hn+1}).
It is not hard to see the convergence.
Denote the mapping $J \mapsto \Xi_{n}J$ by $\Xi_{n}$, which lies in $\Hom_{P_{n}}(\Ib_{\psi_{n}}(\chi), \Ib_{\psi_{n}}(\nu^2\chi)_{n})$.
Let $J_{n+2,\chi}^\circ$ be the essential pre-Shalika form.
Put
\begin{align*}
(\bu, \delta)= 
\begin{cases}
(1_m,1_m) & \mbox{if $\chi$ is unramified} \\
(v_m^\sharp,\delta_m^\sharp) & \mbox{if $\chi$ is ramified} 
\end{cases}
\end{align*}
where $\delta_m^\sharp \in D_m$ and $v_m^\sharp \in \bar{U}_m,$ are the matrices defined at (\ref{def:dm}) and (\ref{def:um}), respectively.
Abbreviate
\begin{align*}
g &= g_{n+2} = \ac{\bu}\ac{\delta} \in P_{n+2} \\
\eta &= \ac{h}_{n+1} \in P_{n+2} \\
J' &= \theta_{n+2}^{\nu \chi} \phi_n^{(0)} J_{n, \nu\chi}^\circ\\
J^\circ &= J_{n+2,\chi}^\circ.
\end{align*}
We will prove $\Xi_{n+2}$ is nontrivial by showing that $\Xi_{n+2} J_{n+2}^\circ(g) \neq 0$.
Since $\eta^{-1}g\eta$ equals $g$, and $J^\circ$ is $P_{n+2}(\oo)$-invariant, we have
\begin{align*}
\Xi_{n+2} J^\circ(g) = \int_{F^{m}} \int_{F^{m}} \psi(z_m) J^\circ(\begin{bmatrix}
\bu \delta &{}^t y & &  \\
& 1& & \\
& {}^t z & 1_{m}&  \\
& & & 1 \\
\end{bmatrix}) \dr y \dr z.
\end{align*}
By the definition of $J^\circ$, this equals
\begin{align} 
\iiiint \frac{\psi(z_m)}{\chi(A)} J'(\eta \begin{bmatrix}
A& & &  \\
& 1 & x & \\
& &A &  \\
& & &1  \\
\end{bmatrix}\begin{bmatrix}
\bu \delta &{}^t y & &  \\
& 1& & \\
& {}^t z & 1_{m}&  \\
& & & 1 \\
\end{bmatrix}) \dr A \dr x \dr y \dr z \label{eq:WMint1st}
\end{align} 
where the integral in $A$ is over $P_m \bs G_m$, and those in $x, y,z$ are over $F^m$.
By a matrix computation, the integrated function equals
\begin{align*}
\frac{\psi(z_m) \psi(x{}^ty)}{\chi(A)} J'(\eta \begin{bmatrix}
A& & &  \\
& 1 & x  & \\
&  &A&  \\
& & &1  \\
\end{bmatrix}\begin{bmatrix}
\bu \delta  && &  \\
& 1& & \\
& {}^t z & 1_{m}&  \\
& & & 1 \\
\end{bmatrix}). 
\end{align*}
Fixing $z$, we compute 
\begin{align*}
& \int_{F^m} \int_{P_m \bs G_m} \frac{\psi(x{}^ty)}{\chi(A)} J'(\eta \begin{bmatrix}
A \bu & & &  \\
& 1 & x  & \\
&  &A&  \\
& & &1  \\
\end{bmatrix}
\begin{bmatrix}
\delta & & &  \\
& 1& & \\
& {}^tz &1_m &  \\
& &  &1  \\
\end{bmatrix}) \dr A \dr x \\
&= 
 \int_{F^m} \int_{P_m \bs G_m}\frac{\psi(x{}^ty)}{\chi(A)} J'(\eta \begin{bmatrix}
A & & &  \\
& 1 & x  & \\
&  &A \bu^{-1} &  \\
& & &1  \\
\end{bmatrix}
\begin{bmatrix}
\delta & & &  \\
& 1& & \\
& {}^tz &1_m &  \\
& &  &1  \\
\end{bmatrix}) \dr A \dr x.
\end{align*}
This equals 
\begin{align*}
\int_{F^m} \int_{P_m \bs G_m} \frac{\psi(x\bu^{-1}{}^ty)}{\chi(A)} J'(\eta \begin{bmatrix}
A & & &  \\
& 1 & x  & \\
&  &A &  \\
& & &1  \\
\end{bmatrix}
\begin{bmatrix}
\delta & & &  \\
& 1& & \\
& \bu^{-1} {}^tz &1_m &  \\
& &  &1  \\
\end{bmatrix}) \dr A \dr x
\end{align*} 
by the $P_{n+2}(\oo)$-invariance property and the matrix computation:
\begin{align*}
& \begin{bmatrix}
A & & &  \\
& 1 & x  & \\
&  &A \bu^{-1} &  \\
& & &1  \\
\end{bmatrix}
\begin{bmatrix}
1_m & & &  \\
& 1& & \\
& &\bu &  \\
& &  &1  \\
\end{bmatrix} = \begin{bmatrix}
A & & &  \\
& 1 & x \bu  & \\
&  &A &  \\
& & &1  \\
\end{bmatrix}, \\
& \begin{bmatrix}
1_m & & &  \\
& 1& & \\
& &\bu^{-1} &  \\
& &  &1  \\
\end{bmatrix} \begin{bmatrix}
\delta & & &  \\
& 1& & \\
& {}^tz &1_m &  \\
& &  &1  \\
\end{bmatrix}
\begin{bmatrix}
1_m & & &  \\
& 1& & \\
& &\bu &  \\
& &  &1  \\
\end{bmatrix} = 
\begin{bmatrix}
\delta & & &  \\
& 1& & \\
& \bu^{-1}{}^tz &1_m &  \\
& &  &1  \\
\end{bmatrix}.
\end{align*} 
Consequently, at (\ref{eq:WMint1st}), we replace the integrated function with 
\begin{align} 
\frac{\psi(\bu_m{}^tz + x{}^ty)}{\chi(A)} J'(\eta \begin{bmatrix}
A & & &  \\
& 1 & x & \\
&  &A  &  \\
& & &1  \\
\end{bmatrix}
\begin{bmatrix}
\delta & & &  \\
& 1& & \\
& {}^tz  &1_m &  \\
& &  &1  \\
\end{bmatrix}), \label{eq:1stthl}
\end{align} 
where $\bu_m$ indicates the last row of $\bu$.
Let
\begin{align*}
\If_{x,z} &= \int_{P_m \bs G_m} \chi(A)^{-1} J'(\eta \begin{bmatrix}
A & & &  \\
& 1 & x & \\
&  &A  &  \\
& & &1  \\
\end{bmatrix}
\begin{bmatrix}
\delta & & &  \\
& 1& & \\
& {}^tz  &1_m &  \\
& &  &1  \\
\end{bmatrix}) \dr A \\
\If_{y,z}' &= \int_{F^m} \psi(x{}^ty) \If_{x,z} \dr x  \\ 
\If_z &= \int_{F^m} \If_{y,z}' \dr y.
\end{align*} 

Suppose that $z \in \oo^m$.
Then, (\ref{eq:1stthl}) equals
\begin{align*}
\frac{\psi(x{}^ty)}{\chi(A)} J'(\begin{bmatrix}
A\delta & & &  \\
& A& & \\
& x & 1  &  \\
& & &1  \\
\end{bmatrix}).
\end{align*} 
If $x \not\in \oo^m$, then by Prop. \ref{prop:LST}, there exist $k \in K_{m+1}^1, \ h \in K_{m}^1, a \in D_{m}, u \in U_m$ such that 
\begin{align*}
\bu_x = \ac{h} u \begin{bmatrix}
a &  \\
& x_{\s_x(m)}
\end{bmatrix}k.
\end{align*}
Using this expression, we compute 
\begin{align*}
J'(\begin{bmatrix}
A\delta & & &  \\
& A& & \\
& x & 1  &  \\
& & &1  \\
\end{bmatrix}) = J'(\begin{bmatrix}
A\delta & & &  \\
& A h a& *& \\
& & x_{\s_x(m)} &  \\
& & &1  \\
\end{bmatrix}).
\end{align*} 
This is zero by Prop. \ref{prop:Aform}, since $x_{\s_x(m)} \not\in \oo$.
Hence, 
\begin{align}
\begin{split}
\If'_{y,z} &=\int_{\oo^m} \psi(x{}^ty) \int_{P_m\bs G_m }\chi(A)^{-1} J'(\begin{bmatrix}
A\delta & & &  \\
& A& & \\
& & 1  &  \\
& & &1  \\
\end{bmatrix}) \dr A \dr x \\
&= 
\begin{cases}
1 & \mbox{if $\chi$ is unramified and $y$ lies in $\oo^m$} \\
0 & \mbox{otherwise.} 
\end{cases}
\end{split}\label{eq:Ifyz}
\end{align} 

Suppose that $z \not\in \oo^m$.
For $d \in D_m$, let $o(d)$ denote $(o(d_{1,1}), \ldots, o(d_{m,m}))$.
Let $\bar{z} = (z_m,z_{m-1}, \ldots, z_1)$ and $\bar{\delta} = \diag(\delta_{m,m}, \ldots, \delta_{1,1})$.
Abbreviate $z_{\s_z(m)}$ and $\tau_{\bar{z}}^{-o(\bar{\delta})}$ as $z_0$ and $\bar{\tau}$ respectively.
By Remark \ref{rem:ux}, there exist $h \in (K_m^1)^{o(\delta)}, d \in D_m$ and $k \in K_{m+1}^1$ such that 
\begin{align*}
\begin{bmatrix}
1&  \\
{}^tz & 1_m
\end{bmatrix} = \begin{bmatrix}
1&  \\
& h
\end{bmatrix}
\begin{bmatrix}
1 & z(\bar{\tau})^*  \\
&1_m
\end{bmatrix}
\begin{bmatrix}
z_{0}^{-1} &  \\
& d
\end{bmatrix}k.
\end{align*} 
Observe that 
\begin{align}
\overline{d(z(\bar{\tau}))} = 
\begin{bmatrix}
z_{0} &  \\
& d^{-1}
\end{bmatrix}.  \label{def:d}
\end{align} 
Using the above expression, we compute 
\begin{align*}
&\If'_{y,z} = \iint \frac{\psi(x{}^ty)}{\chi(A)} J'(\eta \begin{bmatrix}
A & & &  \\
& 1 & x & \\
&  &A  &  \\
& & &1  \\
\end{bmatrix}
\begin{bmatrix}
\delta & & &  \\
& 1& & \\
& {}^tz  &1_m &  \\
& &  &1  \\
\end{bmatrix}) \dr A \dr x \\
&= \iint \frac{\psi(x{}^ty)}{\chi(A)} J'(\eta \begin{bmatrix}
A& & &  \\
& 1 & x h +z(\bar{\tau})^* & \\
&  &A h&  \\
& & &1  \\
\end{bmatrix}
\begin{bmatrix}
\delta & & &  \\
&z_{0}^{-1} & & \\
& & d &  \\
& & &1  \\
\end{bmatrix}) \dr A \dr x  \\
&= 
\iint \frac{\psi(xh^{-1} {}^ty)}{\chi(A)} J'(\eta \begin{bmatrix}
A h^{-1} & & &  \\
& 1 & x +z(\bar{\tau})^* & \\
&  &A &  \\
& & &1  \\
\end{bmatrix}
\begin{bmatrix}
\delta & & &  \\
&z_{0}^{-1} & & \\
& & d &  \\
& & &1  \\
\end{bmatrix}) \dr A \dr x\\
&= 
\iint \frac{\psi((x -z(\bar{\tau})^*){}^ty')}{\chi(A)} J'(\eta\begin{bmatrix}
A & & &  \\
& 1 & x  & \\
&  &A &  \\
& & &1  \\
\end{bmatrix}
\begin{bmatrix}
\delta & & &  \\
&z_{0}^{-1} & & \\
& & d &  \\
& & &1  \\
\end{bmatrix}) \dr A \dr x,
\end{align*} 
where we write $y' = y{}^th^{-1}$, and the integral $A$ (resp. $x$) is over $P_m \bs G_m$ (resp. $F^m$).
Since $\eta$ lies in $P_{n+2}(\oo)$, we have 
\begin{align*}
& J'(\eta\begin{bmatrix}
A & & &  \\
& 1 & x  & \\
&  &A &  \\
& & &1  \\
\end{bmatrix}
\begin{bmatrix}
\delta & & &  \\
&z_{0}^{-1} & & \\
& & d &  \\
& & &1  \\
\end{bmatrix}) = 
J'(\begin{bmatrix}
A & & &  \\
& A &  & \\
&  x &1 &  \\
& & &1  \\
\end{bmatrix}
\begin{bmatrix}
\delta & & &  \\
&d & & \\
& &z_{0}^{-1}&  \\
& & &1  \\
\end{bmatrix}\eta) \\
&= 
J'(\begin{bmatrix}
A \delta & & &  \\
& A d &   & \\
& &z_{0}^{-1} &  \\
& & &1  \\
\end{bmatrix}
\begin{bmatrix}
1_m & & &  \\
& 1_m & & \\
& x' & 1 &  \\
& & &1  \\
\end{bmatrix}),
\end{align*} 
where $x' = z_0x d$. 
Therefore, $\psi(z(\bar{\tau})^*{}^ty')\If'_{y,z}$ equals 
\begin{align*}
\chi(z_0)\int_{F^m}\int_{P_m \bs G_m} \frac{\psi(x{}^ty')}{\chi(A)} J'(\begin{bmatrix}
A d^{-1} \delta & & &  \\
& A&   & \\
&&z_{0}^{-1} &  \\
& & &1  \\
\end{bmatrix}
\begin{bmatrix}
1_m & & &  \\
& 1_m & & \\
& x' & 1 &  \\
& & &1  \\
\end{bmatrix}) \dr A \dr x.
\end{align*} 
Put
\begin{align*}
\If_{x,z}^\circ = 
\int_{P_m \bs G_m} \chi(A)^{-1} J'(\begin{bmatrix}
A d^{-1} \delta & & &  \\
& A &   & \\
& &z_{0}^{-1} &  \\
& & &1  \\
\end{bmatrix}
\begin{bmatrix}
1_m & & &  \\
& 1_m & & \\
& x' & 1 &  \\
& & &1  \\
\end{bmatrix}) \dr A. 
\end{align*} 
Observe that 
\begin{align}
\begin{split}
\If_z = \int_{F^m} \If'_{y,z} \dr y &= \chi(z_0) \int_{F^m}\int_{F^m} \psi(-z(\bar{\tau})^*{}^ty') \psi(x{}^ty') \If_{x, z}^\circ \dr x \dr y \\
&= \chi(z_0) \int_{F^m}\int_{F^m} \psi(-z(\bar{\tau})^*{}^ty)\psi(x{}^ty) \If_{x, z}^\circ \dr x \dr y.
\end{split} \label{eq:Ifz=}
\end{align} 
Abbreviate
\begin{align*}
\tau' = \tau_{x'}^{o(d^{-1} \delta)},\ \s' = \s_{x'},\  x_0' = {x'}_{\s'(m)}. 
\end{align*} 
By Prop. \ref{prop:LST}, there exist $h' \in (K_m^1)^\delta, c \in D_m$ and $k' \in K_{m+1}^1$ such that 
\begin{align}
\bu(x') = \ac{h'} u(x'(\tau_{x'}^{})^*) \begin{bmatrix}
c&  \\
& x_0'
\end{bmatrix}k'. \label{eq:defc}
\end{align}
Using this expression, we compute
\begin{align}
& \If_{x,z}^\circ = \int \chi(A)^{-1} J'(\begin{bmatrix}
A  h'^{-1} d^{-1}\delta & & &  \\
& A&   & \\
& &z_{0}^{-1} &  \\
& & &1  \\
\end{bmatrix}
\begin{bmatrix}
1_m & & &  \\
& 1_m &{}^t x'(\tau')^* & \\
& & 1 &  \\
& & &1  \\
\end{bmatrix}
\begin{bmatrix}
1& & &  \\
& c & & \\
& &x_0' &  \\
& & & 1 \\
\end{bmatrix}) \dr A \notag \\
&= \int \frac{\psi(A_m {}^t z_0x'(\tau')^*)}{\chi(A)} J'(\begin{bmatrix}
A d^{-1}\delta & & &  \\
& A c &   & \\
& &x_0' z_{0}^{-1} &  \\
& & &1  \\
\end{bmatrix}) \dr A \notag \\
&= 
\chi(x_0')^{-1} \int \frac{\psi((Ac^{-1})_m z_0 {}^tx'(\tau')^*)}{\chi(A)} J'(\begin{bmatrix}
A c^{-1}d^{-1} \delta & & &  \\
& A &   & \\
& &x_0' z_{0}^{-1} &  \\
& & &1  \\
\end{bmatrix}) \dr A \notag \\
&= 
\chi(x_0')^{-1} \int \frac{\psi(A_m {}^t(z_0 x'(\tau')^*c^{-1}))}{\chi(A)}J'(\begin{bmatrix}
A (dc)^{-1}\delta & & &  \\
& A &   & \\
& &x_0' z_{0}^{-1} &  \\
& & &1  \\
\end{bmatrix}) \dr A \label{eq:WMintfn}
\end{align} 
where integrals in $A$ are over $P_m \bs G_m$.
By Prop. \ref{prop:Aform}, at (\ref{eq:WMintfn}), we may assume that
\begin{align}
o(x_0') = o(z_{0}). \label{eq:zx0}
\end{align}

From now, consider the case where $\chi$ is ramified.
Assume that the integrated function at (\ref{eq:WMintfn}) is not zero at $A \in P_m \bs G_m$.
Then, by Prop. \ref{prop:Aform}, we may assume that $A$ is in the form of $A_w(t),w \in \Sf_m, t \in (\oo^\tm)^m$ (c.f. (\ref{def:Awt})).
Put
\begin{align*}
\Jf_{x,z}(t) = \chi(A_w(t))^{-1} J'(\begin{bmatrix}
A_w(t) (dc)^{-1}\delta & & &  \\
& A_w(t) &   & \\
& &x_0' z_{0}^{-1} &  \\
& & &1  \\
\end{bmatrix}).
\end{align*} 
Observe that, for $s_1, \ldots, s_m \in \oo^\tm$, 
\begin{align}
\begin{split}
\Jf_{x,z}((s_1t_1, \ldots, s_m t_m)) &= \Jf_{x,z}(t) \prod_{i = 1}^m \chi(s_i)^{-1} \\
 \Jf_{(s_1x_1, \ldots, s_m x_m),z}(t) &= \Jf_{x, (s_1z_1, \ldots, s_m z_m)}(t) = \Jf_{x,z}(t).
  \end{split}
\label{eq:Jfxz}
\end{align}
Assume that $\If_{x,z}^\circ$ is not zero.
Then, by (\ref{eq:GSsup}), we have an identity
\begin{align*}
\{o((z_0 x'(\tau')^*c^{-1})_1, \ldots,o((z_0 x'(\tau')^*c^{-1})_m \} = \{ -e, \ldots, -me \}
\end{align*} 
as sets.
Thus, $\tau'$ is the full-set.
However, from (\ref{def:dxi}), it follows that 
\begin{align*}
(x'^*c^{-1})_i = 
\begin{cases}
1 & \mbox{if $i = \s'(1)$} \\
x'^{-1}_{\s'(\s'^{-1}(i) -1)} & \mbox{otherwise}. 
\end{cases}
\end{align*}
Taking (\ref{eq:zx0}) into account, we conclude that 
\begin{align*}
o(z_0) = -me, \ o(x_{\s'(i)}') = - ie, \ i \in \{1,\ldots, m \}.
\end{align*} 
Therefore,
\begin{align*}
o(c_{i,i}) = e, \ i \in \{1,\ldots, m \}.
\end{align*}
\begin{prop}
If the integrated function at (\ref{eq:WMintfn}) is not zero, then $\bar{\tau}$ is the full-set.
Thus $z$ lies in $(F \setminus \oo)^m$.
\end{prop}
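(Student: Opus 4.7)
The plan is to derive the conclusion from the order-accounting just carried out for the variable $x'$. From the paragraph immediately preceding the statement we have, under the nonvanishing hypothesis, $o(z_{0}) = -me$, $o(x'_{\s'(i)}) = -ie$ for $i = 1,\ldots,m$, and $o(c_{i,i}) = e$ throughout. Feeding these back through the substitution $x' = z_{0}\,x\,d$ produces a sharp and tight system of order constraints for the multiset $\{o(x_{j}) + o(d_{j,j})\}_{j=1}^{m}$, each entry forced to be of a prescribed value mod integral ambiguities.

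The next step is to unwrap $d$ via (\ref{def:d}) together with the explicit formula (\ref{def:dxi}): each $d_{j,j}$ is determined by whether the reversed index is hit by $\bar{\s}(\bar{\tau})$, and if so by a ratio of successive entries of $\bar{z}(\bar{\tau})$ in the $\bar{\s}$-ordering; otherwise $d_{j,j} = 1$. By the definition of $\bar{\tau} = \tau_{\bar{z}}^{-o(\bar{\delta})}$, an index fails to lie in $\bar{\tau}$ precisely when either the corresponding $\bar{z}$-entry lies in $\oo$ (failing the first bullet in the definition of $\tau_{x}^{f}$) or the strict order comparison with subsequent entries fails (second bullet). In either case the corresponding $d$-slot is trivial, so the prescribed order in $o(x_{j}) + o(d_{j,j})$ would have to be supplied by $x_{j}$ alone.

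I would then show this is impossible. Any $x_{j}$-contribution is constrained by the Gauss-sum support that produced (\ref{eq:WMintfn}) via (\ref{eq:GSsup}), whereas the nontrivial slots of $d$ already account for the arithmetic progression of orders $\{e, 2e, \ldots, (m-1)e\}$ that the identities $o(x'_{\s'(i)}) = -ie$ and $o(z_{0}) = -me$ impose. A pigeonhole count on the multiset of orders then yields a contradiction unless every diagonal entry of $d$ contributes nontrivially, that is, unless $\bar{\s}(\bar{\tau}) = \{1,\ldots,m\}$, which is the claim $\bar{\tau} = \{1,\ldots,m\}$. The clause $z \in (F\setminus\oo)^{m}$ is then immediate from the first bullet in the definition of $\tau_{\bar{z}}^{-o(\bar{\delta})}$. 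The main obstacle is the simultaneous bookkeeping of the three permutations $\s_{z}, \s_{x'}$ and the reversal identifying $\bar{z}$ with $z$: once the dictionary between $\bar{\tau}$, the positions of the nontrivial diagonal entries of $d$, and the orders of the $x_{j}$'s is pinned down, the combinatorial argument is short, but the indexing is delicate and must exactly mirror the one used to establish $o(x'_{\s'(i)}) = -ie$ just before the statement.
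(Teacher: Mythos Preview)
Your approach has a genuine gap. The relation $x' = z_{0}\,x\,d$ together with $o(z_{0}) = -me$ and $o(x'_{\s'(i)}) = -ie$ does give the multiset identity $\{o(x_{j}) + o(d_{j,j})\}_{j} = \{0, e, \ldots, (m-1)e\}$, but this alone cannot force $\bar{\tau}$ to be full. The step where you assert that ``any $x_{j}$-contribution is constrained by the Gauss-sum support that produced (\ref{eq:WMintfn}) via (\ref{eq:GSsup})'' is not correct: no Gauss sum in $x$ has been computed at this point. The variable $x$ is a free integration variable in the outer integral defining $\If'_{y,z}$, and the Gauss sums already invoked were in the $t$-variable of $A_{w}(t)$, yielding constraints on $x'$ only. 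So if some $d_{i,i} = 1$, the multiset identity merely says $o(x_{i}) \in \{0, e, \ldots, (m-1)e\}$, which is no obstruction. Your further claim that the nontrivial $d$-slots ``already account for'' the progression $\{e, 2e, \ldots, (m-1)e\}$ is circular: the orders of the $d_{j,j}$ are ratios of $z$-entries indexed by $\bar{\tau}$, and it is precisely the orders of the $z_{j}$ that you are trying to determine.

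The paper's argument avoids $x$ entirely and is a one-line parity check. Prop.~\ref{prop:Aform}, applied to the matrix in (\ref{eq:WMintfn}), forces $(cd)^{-1}\delta_m^\sharp \in (\delta_m^\sharp)^{r} K_{m}$ for some $r \in \Sf_{m}$; since $c, d, \delta_m^\sharp$ are diagonal this reads $o((cd)_{j,j}) \equiv 0 \pmod{2e}$ entrywise (both $\delta_m^\sharp$ and its permutation conjugates have diagonal orders in $2e\Z$). With $o(c_{i,i}) = e$ already established, every $o(d_{i,i})$ must be an odd multiple of $e$. But if $\bar{\tau}$ is not the full set then some $d_{i,i} = 1$, giving $o(d_{i,i}) = 0$, a contradiction. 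This Cartan/parity constraint from Prop.~\ref{prop:Aform} is the missing ingredient in your sketch; once you invoke it, the detour through $x$ becomes unnecessary.
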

\begin{proof}
Assume that $\bar{\tau}$ is not the full-set, then some $d_{ii}$ is $1$ and therefore $o(d_{ii})$ is zero.
But, by Prop. \ref{prop:Aform}, we need $o(cd \delta_m^\sharp) = o((\delta_m^\sharp)^{r})$ for some $r \in \Sf_m$, which cannot happen.
\end{proof}
\nid
By (\ref{eq:Jfxz}),  
\begin{align*}
\If_{x,z}^\circ &= \chi(x_0')^{-1} \int_{(\oo^\tm)^m} \psi(A_w(t)_m {}^t(z_0 x'^*c^{-1})) \Jf_{x,z}(t) \dr t \\
&= \gamma_{x,z} \Jf_{x,z}(1),
\end{align*} 
where 
\begin{align*}
\gamma_{x,z} = \frac{\gf(\chi^{-1},\psi_{\Vp^{w_-(\s'(1))e}z_0})}{\chi(x_0')}\prod_{i = \{1, \ldots,m \} \setminus \{\s'(1)\}} \gf\left(\chi^{-1}, \psi_{\Vp^{w_-(i)e}z_0/x'_{\s'(\s'^{-1}(i) -1)}} \right).
\end{align*} 
Since we have assumed $\If_{x,z}^\circ \neq 0$, we need 
\begin{align*}
w_-(i) &= m -\s'^{-1}(i), \  i \in \{1,\ldots, m \}
\end{align*} 
by (\ref{eq:GSsup}).
By (\ref{eq:GSprop}) and (\ref{eq:Jfxz}), for $s_1, \ldots, s_m \in \oo^\tm$, 
\begin{align*}
\gamma_{(s_1x_1, \ldots, s_mx_m),z} = \gamma_{x,z}\prod_{i=1}^m \chi(s_i)^{-1} \\
\gamma_{x,(s_1z_1, \ldots, s_mz_m)} = \gamma_{x,z}\chi(s_{\s(m)})^{-1}.
\end{align*} 
By this property, we have
\begin{align*}
\int_{F^m} \psi(x{}^ty) \If_{x,z}^\circ \dr x &= \gamma_{\xb,y,z}' \Jf_{\xb,z}(1), \\
\gamma_{\xb,y,z}' &= \gamma_{\xb,z} \prod_{i=1}^m \gf(\chi^{-1},\psi_{y_i \xb_i})
\end{align*} 
where $\xb \in \supp(\If_{x,z}^\circ)$. 
By (\ref{eq:GSprop}) and (\ref{eq:Ifz=}),
\begin{align*}
\If_z &= \chi(z_0) \int_{F^m} \psi(-z^*{}^ty)\gamma_{\xb,y,z}' \Jf_{\xb,z}(1) \dr y.\\
&= \chi(z_0) \Jf_{\xb,z}(1) \gamma''_{\xb,z}, \\
\gamma''_{\xb,z} &= \gamma_{\xb, \yb,z}' \prod_{i=1}^m \gf(\chi^{-1},\psi_{-\yb_iz_i^{-1}}),
\end{align*}  
where $\yb \in \supp(\prod_{i=1}^m \gf(\chi^{-1},\psi_{y_i \xb_i}))$. 
By (\ref{eq:GSprop}) again,
\begin{align*}
\Xi_{n+2}J^\circ(g) &= \int_{F^m} \psi(\bu_m{}^tz) \chi(z_0) \Jf_{\xb,z}(1) \gamma''_{\xb,z} \dr z \\
&= \chi(\Vp^{-me}) \gf(\chi^{-1},\psi_{\Vp^{-e}})^m \gamma''_{\xb,\zb} \Jf_{\xb, \zb}(1),
\end{align*} 
where 
\begin{align*}
\zb = (\Vp^{-me}, \ldots, \Vp^{-e}).
\end{align*} 
\begin{prop}
If $\chi$ is ramified, then $\Xi_{n+2}J^\circ(g)$ is nonzero.
\end{prop}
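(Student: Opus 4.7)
The plan is to verify that each of the four factors in the formula
\begin{align*}
\Xi_{n+2}J^\circ(g) = \chi(\Vp^{-me}) \gf(\chi^{-1},\psi_{\Vp^{-e}})^m \gamma''_{\xb,\zb} \Jf_{\xb,\zb}(1)
\end{align*}
is separately nonzero. The first, $\chi(\Vp^{-me})$, is a value of the character $\chi$, hence nonzero. The second, $\gf(\chi^{-1},\psi_{\Vp^{-e}})^m$, is nonzero by (\ref{eq:GSsup}), since the conductor of $\chi^{-1}$ is $e$ and $o(\Vp^{-e}) = -e$.

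For $\gamma''_{\xb,\zb}$, first unwind its definition: it is a product of Gauss sums $\gf(\chi^{-1},\psi_a)$ inherited from the three stages $\gamma_{x,z}$, $\gamma'_{\xb,y,z}$, and $\gamma''_{\xb,z}$. The forced relations $o(z_0) = -me$, $o(x'_{\s'(i)}) = -ie$, and $w_-(i) = m - \s'^{-1}(i)$ derived just above, combined with the choices $\xb \in \supp(\If_{x,z}^\circ)$ and $\yb \in \supp\bigl(\prod_i \gf(\chi^{-1},\psi_{y_i\xb_i})\bigr)$, allow repeated application of (\ref{eq:GSprop}) to track the $p$-adic orders of the successive parameters. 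A short bookkeeping shows every parameter $a$ ends up with $o(a) = -e$, so each factor is nonzero by (\ref{eq:GSsup}) and hence $\gamma''_{\xb,\zb} \neq 0$.

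For $\Jf_{\xb,\zb}(1)$, by definition this equals $\chi(A_w(1))^{-1} J^\circ(M)$, where $M$ is the block-diagonal matrix inside $J'$ in (\ref{eq:WMintfn}) with $t=1$. Using the explicit formulas (\ref{def:d}), (\ref{eq:defc}), and $\delta = \delta_m^\sharp$, and the identity $o(x_0' z_0^{-1}) = 0$ from (\ref{eq:zx0}), a direct diagonal-order computation shows the Cartan type of $M$ coincides with that of $v_{m+1}^\sharp \delta_{m+1}^\sharp$ after a $P_{n+2}(\oo)$-adjustment and a Weyl permutation. Hence $M$ lies in the double coset $S_{n+2}^\circ v_{m+1}^\sharp \delta_{m+1}^\sharp P_{n+2}(\oo)$, which by Theorem \ref{thm:suppJ} applied at level $n+2$ is exactly the support of $J^\circ$; thus $J^\circ(M) \neq 0$. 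The main obstacle is this last step — carefully matching the $p$-adic orders of the diagonal entries of the composite matrix $A_w(1)(dc)^{-1}\delta$ to those of $\delta_{m+1}^\sharp$ under the correct Weyl conjugation — but it is routine once the cumulative order relations are written down. Multiplying the four nonzero factors yields the claimed nonvanishing of $\Xi_{n+2}J^\circ(g)$.
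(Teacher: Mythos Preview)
Your overall strategy --- verify each of the four factors in the product formula is nonzero --- is the same as the paper's, and your treatment of the first three factors is fine. The gap is in your handling of $\Jf_{\xb,\zb}(1)$.

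You write that $\Jf_{\xb,\zb}(1) = \chi(A_w(1))^{-1}J^\circ(M)$ and then invoke Theorem~\ref{thm:suppJ} at level $n+2$ to place $M$ in the support of $J^\circ = J_{n+2,\chi}^\circ$. But $\Jf_{x,z}$ is defined using $J'$, not $J^\circ$; despite the paper's unfortunate wording in the list of abbreviations, the derivation from (\ref{eq:WMint1st}) onward makes clear that $J'$ is the function $\phi_n^{(0)}J_{n,\nu\chi}^\circ$ on $P_{n+2}$, obtained by unwinding the $\theta_{n+2}^{\nu\chi}$-integral in the definition of $J^\circ$. So the nonvanishing of $\Jf_{\xb,\zb}(1)$ is governed by the level-$n$ structure (Proposition~\ref{prop:Aform} and Theorem~\ref{thm:suppJ} for $J_{n,\nu\chi}^\circ$), together with Proposition~\ref{prop:welldefJW}~ii) for the extra two coordinates, not by the level-$(n+2)$ support of $J^\circ$. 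Your claim that $M$ lies in $S_{n+2}^\circ\, v_{m+1}^\sharp\delta_{m+1}^\sharp\, P_{n+2}(\oo)$ is therefore aiming at the wrong target, and the block structure of $M$ (blocks of sizes $m,m,1,1$) does not even align naturally with the $(m+1)\times(m+1)$ Shalika decomposition at level $n+2$.

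The paper's argument for this step is implicit: for $(x,y,z)\in X\times Y\times Z$ one has $\sigma,\sigma'$ equal to the identity, and then the necessary conditions derived before the proposition (via Proposition~\ref{prop:Aform}, whose statement is an ``if and only if'') become sufficient, forcing $J'$ to be nonzero at the relevant matrix. Concretely, one checks $x_0'z_0^{-1}\in\oo^\times$ from (\ref{eq:zx0}) and that $(dc)^{-1}\delta$ lies in $(\delta_m^\sharp)^w K_m$ for the appropriate $w$, which places the upper $n\times n$ block in the support of $J_{n,\nu\chi}^\circ$. Once you redirect your diagonal-order bookkeeping to this level-$n$ check, the argument goes through.
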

\begin{proof}
By the above argument, we have 
\begin{align*}
\Xi_{n+2}J^\circ(g) = \int_{Z} \int_Y \int_X \psi(\bu_m{}^tz-z^*{}^ty+ x{}^ty)\If_{x,z}^\circ \dr x \dr y \dr z,
\end{align*} 
where
\begin{align*}
Z &= \Vp^{-me} \oo^\tm \tm \Vp^{1-me} \oo^\tm \tm \cdots \tm \Vp^{-e} \oo^\tm\\
Y &= \Vp^{-(m+1)e} \oo^\tm \tm \Vp^{-me} \oo^\tm \tm \cdots \tm \Vp^{-2e} \oo^\tm\\
X &= \Vp^{me} \oo^\tm \tm \Vp^{(m-1)e} \oo^\tm  \cdots \tm \Vp^{e} \oo^\tm.
\end{align*} 
If $(x,y,z)$ lies in $X \tm Y \tm Z$, then $\If_{x,z}^\circ$, $\gamma_{x,z}, \gamma_{x,y,z}', \gamma''_{x,z}$ are nonzero, and so is $\Xi_{n+2}J^\circ(g)$ (in this case, $\s, \s' \in \Sf_m$ are the identity.).
\end{proof}
From now consider the case where $\chi$ is unramified.
Since we assumed that the integrated function (\ref{eq:WMintfn}) is nonzero, $\det(cd)$ lies in $\oo^\tm$ by (\ref{eq:zx0}).
By Theorem \ref{thm:unrpsi}, for $A \in G_m$ at (\ref{eq:WMintfn}), there exists a $p \in P_m$ such that 
\begin{align*}
\begin{bmatrix}
A (dc)^{-1} &  \\
& A
\end{bmatrix} \in \begin{bmatrix}
p & *  \\
& p
\end{bmatrix} P_{n}(\oo).
\end{align*} 
Therefore, it holds that
\begin{align}
o(c_{i,i}) = -o(d_{i,i}), i \in \{1,\ldots, m\}, \label{eq:ocod}
\end{align}
and $A$ lies in $P_mK_m$.  
\begin{prop}\label{prop:zd}
Under the above assumption, we have the followings.
\begin{enumerate}[i)]
\item $o(x_0') (=o(z_0)) = -1$. 
\item $\bar{\tau} = \{m \}$.
\item Let $i$ be the first number such that $z_i \not\in \oo$.
Then,
\begin{align*}
d = \diag(\overbrace{1, \ldots, 1}^{i-1}, z_i^{-1}, \overbrace{1, \ldots, 1}^{m-i}).
\end{align*} 
\end{enumerate}
\end{prop}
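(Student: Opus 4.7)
The proposition collects three assertions about the geometric data emerging from the Whittaker model computation in the unramified case, and I would prove them in the order ii), iii), i), as each step refines the previous.

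For ii), I would unpack the definition of $\bar{\tau} = \tau_{\bar{z}}^{-o(\bar{\delta})}$. Since $\chi$ is unramified, $\bar{\delta} = 1_m$, so the defining conditions for $i \in \bar{\tau}$ become $\bar{z}_{\s_{\bar{z}}(i)} \notin \oo$ together with the strict inequalities $o(\bar{z}_{\s_{\bar{z}}(i)}) < o(\bar{z}_{\s_{\bar{z}}(j)})$ for all $j > i$. Since $\s_{\bar{z}}$ by definition orders valuations non-strictly in the opposite direction, these strict inequalities can hold only vacuously, i.e., only when $i = m$. Membership of $m$ in $\bar{\tau}$ then follows from $z_0 \notin \oo$, which is our standing assumption in this case $z \notin \oo^m$.

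Knowing $\bar{\tau} = \{m\}$, I would substitute into (\ref{def:dxi}) and read off the diagonal entries of $d$ via (\ref{def:d}). Since $z(\bar{\tau})$ has a single nonzero entry equal to $z_0$ at position $\s_{\bar{z}}(m)$, a direct computation shows that $d$ is diagonal with $1$'s everywhere except at a single position determined by $\s_{\bar{z}}(m)$, giving the sparse shape claimed in iii). To complete iii), I would then combine the constraint $o(c_{i,i}) = -o(d_{i,i})$ from (\ref{eq:ocod}) with Theorem \ref{thm:unrpsi} and the nonvanishing of the integrand in (\ref{eq:WMintfn}) to force this unique non-unit position to coincide with the smallest index $i$ at which $z_i \notin \oo$, by ruling out (via a support violation) any $z_{i'} \notin \oo$ situated before that position.

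For i), the valuation $o(z_0) = -1$ should emerge from propagating the support constraints through the outer integrations: the $y$-integral defining $\If_z$ in (\ref{eq:Ifz=}) and the $z$-integral against $\psi(z_m)$ in the setup of $\Xi_{n+2} J^\circ(g)$. Each is an oscillatory integral over $F^m$ whose non-triviality requires specific valuation conditions under the normalization $\psi(\oo)=\{1\} \neq \psi(\p^{-1})$; combining these with the sparse form of $d$ already established in iii), together with (\ref{eq:zx0}), pins down $o(z_0) = o(x_0') = -1$. The main obstacle will be the compatibility part of iii), namely verifying that the permutations $\s_{\bar{z}}$ and $\s_{x'}$ interact with the nested unipotent and diagonal factors in the integrand so that any $z$-component outside $\oo$ appearing before the position forced by $\bar{\tau} = \{m\}$ would destroy nonvanishing; this requires careful bookkeeping of how the conjugations in the transition from (\ref{def:Ix}) to (\ref{eq:WMintfn}) move valuations between $x, z, d, c$ under the assumption $A \in P_m K_m$.
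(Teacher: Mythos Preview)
Your argument for ii) is correct and in fact cleaner than the paper's: with $\chi$ unramified one has $\bar\delta=1_m$, so the parameter is $f=0$, and the second defining condition for membership in $\bar\tau$ becomes $o(\bar z_{\s(i)})<o(\bar z_{\s(j)})$ for all $j>i$, which the very definition of $\s$ forbids unless $i=m$. The paper instead establishes i) first and then uses it to derive ii).

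However, your plan for i) has a genuine gap. The ``above assumption'' in the proposition is the nonvanishing of the integral $\If_{x,z}^\circ$ in (\ref{eq:WMintfn}) for \emph{fixed} $x$ and $z$; the outer $y$- and $z$-integrals you invoke belong to the later steps defining $\If_z$ and $\Xi_{n+2}J^\circ(g)$ and are simply not available at this point. The paper's mechanism is entirely internal to (\ref{eq:WMintfn}): one checks that some coordinate of the vector $z_0\, x'(\tau')^*\, c^{-1}$ equals $z_0$ itself (take the first $j$ with $j-1\notin\tau'$ and $j\in\tau'$), and then uses the explicit parametrisation of $P_m\backslash P_mK_m$ to see that the $A$-average of $\psi(A_m\,{}^t v)$ already vanishes whenever $v$ has an entry of valuation less than $-1$. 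This forces $o(z_0)\ge -1$, hence $o(z_0)=-1$.

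This gap also undermines your route to iii). From ii) alone you obtain the \emph{shape} of $d$ (a single non-unit diagonal entry, equal to $z_0^{-1}$), but its \emph{position} is dictated by the tie-breaking rule in $\s_{\bar z}$ and therefore singles out the index in $z$ of \emph{minimal valuation}, not the first index with $z_i\notin\oo$. These coincide only after i) guarantees that every $z_j\notin\oo$ has valuation exactly $-1$. Your proposed ``support violation'' patch does not address this, because the obstacle is identifying which index realises the minimum, not excluding other coordinates outside $\oo$. So the workable order is the paper's: prove i) via the $A$-integral over $P_m\backslash P_mK_m$, then ii) (or take it for free as you observed), and finally read off iii) from (\ref{def:d}).
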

\begin{proof}
i) Let $j$ be the first number (may be $1$) such that $j -1 \not\in \tau'$ and $j \in \tau'$ (such a $j$ exists, since $\tau'$ contains $m$.).
Then, 
\begin{align*}
(z_0 x'(\tau')^*c^{-1})_{\s'(j)} = z_0.
\end{align*}
Let $w_j$ be the longest Weyl element of $\Sf_j$.
The coset space $P_m \bs P_m K_m \simeq P_m(\oo) \bs K_m$ is realized as the set of matrices: 
\begin{align*}
\begin{bmatrix}
1_{m-1}&  \\
c & d
\end{bmatrix} 
\begin{bmatrix}
1_{m-j}&  \\
& w_j
\end{bmatrix}, \  j \in \{0, \ldots,m \}
\end{align*}
with $c_1, \ldots, c_{m-j-1} \in \oo, c_{m-j}, \ldots, c_{m-1} \in \p, d \in \oo^\tm$.
Therefore, if $o(z_0)$ is less than $-1$, then 
\begin{align*}
\int_{P_m \bs P_m K_m} \psi(A_m {}^t(z_0 x'(\tau')^*c^{-1})) \dr A = 0, 
\end{align*} 
and (\ref{eq:WMintfn}) is also zero.
Hence i).
Therefore, $o(z_{\s(j)}) \ge -1$ for all $j$.
Take the last $j \neq m$ such that $j \in \tau$ if exists.
Then, $o(z_{\s(j)})$ equals $-1$ by the definition of $\bar{\tau}$.
Thus $o(z_{0}) - o(z_{\s(j)})$ equals $0$, conflicting with the definition of $\bar{\tau}$.
Hence ii).
Now iii) follows immediately from (\ref{def:d}).
\end{proof}
\nid
Set 
\begin{align*}
f^i = (\overbrace{0, \ldots, 0}^{i-1}, 1, \overbrace{0, \ldots, 0}^{m-i}) \in \Z^m.
\end{align*} 
\begin{prop}\label{prop:x}
Let $i$ be as in Prop. \ref{prop:zd}.
Then, 
\begin{align*}
\tau_{x'}^{f^i} = \{ m \}, \ 
x'_j \in 
\begin{cases}
\Vp^{-1}\oo^\tm  & \mbox{if $j = i$} \\
\oo & \mbox{otherwise.} 
\end{cases}
\end{align*} 
\end{prop}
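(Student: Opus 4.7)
Following the strategy used for Thm.~\ref{thm:suppJ}, my plan is to derive sharp constraints on the $p$-adic orders of the components of $x'$ from the requirement that $\If_{x,z}^\circ$ is nonzero, and then translate these into the statement about $\tau_{x'}^{f^i}$ and the individual $x_j'$. First I would record the preliminaries already in place: (i) by Prop.~\ref{prop:zd}(iii), $d=\diag(1,\dots,z_i^{-1},\dots,1)$ with $z_i^{-1}$ at position $i$; (ii) by (\ref{eq:zx0}) together with Prop.~\ref{prop:zd}(i), $o(x_0')=o(z_0)=-1$, so $o(x_j')\geq -1$ for every $j$, with equality at $j=\sigma'(m)$; (iii) by (\ref{eq:ocod}), $o(c_{jj})=0$ for $j\neq i$ and $o(c_{ii})=o(z_i)<0$.

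Next I would exploit the nonvanishing of the $A$-integral in (\ref{eq:WMintfn}). After Thm.~\ref{thm:unrpsi} restricts $A$ to $P_mK_m$, the $\psi(A_m{}^tv)$-factor with $v:=z_0\,x'(\tau')^*c^{-1}$ gets integrated against the explicit coset representatives of $P_m\backslash P_mK_m$ used in the proof of Prop.~\ref{prop:zd}(i); a standard Fourier-vanishing argument over these representatives forces $v\in\oo^m$. Since $v_k=z_0/(x_k'c_{kk})$ for $k\in\sigma'(\tau')$ and is zero otherwise, integrality reads $o(x_k')+o(c_{kk})\leq -1$ for each $k\in\sigma'(\tau')$, which together with (iii) and the lower bound $o(x_k')\geq -1$ from (ii) yields $o(x_k')=-1$ for every $k\in\sigma'(\tau')\setminus\{i\}$.

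Finally I would carry out a case analysis on whether $\sigma'(m)=i$ or not, using the strict-inequality clause in the definition of $\tau'=\tau_{x'}^{-o(d)}$ to rule out configurations in which $\sigma'(\tau')$ contains an index $k\neq i$. The sub-case $\sigma'(m)\neq i$ is excluded because the automatic membership $m\in\tau'$ then places $\sigma'(m)$ into $\sigma'(\tau')\setminus\{i\}$, and the constraint at $j=m-1,k=m$ becomes $0<o(z_i)$, contradicting (iii). Hence $\sigma'(m)=i$, and any putative $j\neq i$ with $o(x_j')=-1$ would by the same bookkeeping fall into $\sigma'(\tau')\setminus\{i\}$ but then fail the strict-inequality clause at $k=m$, a contradiction. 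This forces $\sigma'(\tau')=\{i\}$, $o(x_i')=-1$, and $x_j'\in\oo$ for all $j\neq i$; the conclusion $\tau_{x'}^{f^i}=\{m\}$ then follows by inspection. The main obstacle is precisely this final bookkeeping step, in which the permutation $\sigma'$, the selector $\tau'$ and the diagonal $c$ given by (\ref{def:dxi}) must be tracked simultaneously to preclude every tie among the orders of the $x_j'$.
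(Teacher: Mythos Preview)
Your plan has a concrete error in step~2 and a real gap in step~3.

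\textbf{Step 2.} The Fourier-vanishing claim is too strong. Once $A$ is restricted to $P_mK_m$ and the $J'$-factor becomes a nonzero constant, the remaining $A$-integral is essentially $\int_{P_m\backslash P_mK_m}\psi(A_m\,{}^t v)\,dA$ with $v=z_0\,x'(\tau')^*c^{-1}$; this is an integral over primitive vectors in $\oo^m$ and is nonzero whenever every component of $v$ has order $\ge -1$, not only when $v\in\oo^m$. (Indeed, after the proposition is established, the paper evaluates this very integral as $-q^{-m}$, a value that cannot arise from $v\in\oo^m$.) With the corrected constraint $o(v_k)\ge -1$, your inequality gives only $o(x_k')+o(c_{kk})\le 0$, hence for $k\ne i$ merely $o(x_k')\le 0$, which is vacuous. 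The conclusion $o(x_k')=-1$ for $k\in\sigma'(\tau')$ is nonetheless true, but it comes for free from the \emph{definition} of $\tau'$ (membership forces $x_k'\notin\oo$) together with the bound $o(x_k')\ge -1$; the Fourier argument adds nothing here.

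\textbf{Step 3.} The case analysis is not carried out. The phrases ``constraint at $j=m-1,\,k=m$'' and ``fail the strict-inequality clause at $k=m$'' do not match any labelled relation in the set-up, and the second sub-case is self-defeating: you assert that a putative index $j\ne i$ with $o(x_j')=-1$ lies in $\sigma'(\tau')$ and \emph{also} that it fails the defining inequality of $\tau'$, which is contradictory rather than a proof. To close this you must track simultaneously the ordering $\sigma_{x'(\tau')}$ (which differs from $\sigma'$ once some $x_j'$ lie in $\oo$), the selector $\tau'$, and the explicit formula (\ref{def:dxi}) for the diagonal $c$; it is precisely this interaction that pins down $i$.

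The paper's argument proceeds differently and more directly. It first uses (\ref{eq:ocod}) together with the explicit description (\ref{def:dxi}) of $c$ to conclude $o(x_i')=-1$ and $\sigma'^{-1}(i)\in\tau'$. It then sets $r=|\{j\le i:o(x_j')=-1\}|$, $s=|\{j>i:o(x_j')=-1\}|$, notes $\sigma'(t+r)=i$ with $t=m-r-s$, and eliminates $s>0$ via the strict inequality defining $\tau'$ at the pair $(t+r,\,t+r+1)$, and eliminates $r\ge 2$ by reading off $o(c_{ii})$ from (\ref{def:dxi}) and contradicting (\ref{eq:ocod}). Your outline does not invoke (\ref{def:dxi}) at all, and without it the exclusion of extra indices with order $-1$ cannot be completed.
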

\begin{proof}
From (\ref{eq:ocod}), and the definition (\ref{eq:defc}) of $c$, it follows that $o(x_i') = -1$, and $i \in \tau' = \tau_{x'}^{f^i}$.
Set 
\begin{align*}
 r &= |\{j \in \{1,\ldots, i \} \mid o(x_j) =-1 \}| \\
 s &= |\{j \in \{i+1,\ldots, m \} \mid o(x_j) =-1 \}| \\
 t &= m -r-s.
 \end{align*}
Observe that $t$ is the number of $j$ such that $x_j \in \oo$.
By the definition of $\s' = \s'_{x'}$, we have $\s'(t+r) = i$.
Assume that $s > 0$.
Then, 
\begin{align*}
o(x_{\s'(t+r)}) -o(x_{\s'(t+r+1)}) =0 >-1 =  f_{\s'(t+r+1)} -f_{\s'(t+r)}, 
\end{align*}
conflicting with the condition $i \in \tau'$.
Hence $s = 0$.
Assume that $r \ge 2$.
Then, since
\begin{align*}
o(x_{\s'(t+r-1)}) -o(x_{\s'(t+r)}) =0 < 1 =  f_{\s'(t+r)} -f_{\s'(t+r-1)}, 
\end{align*}
we have $t+r-1 \in \tau'$ and $o(c_{i,i}) = o(x'_{t+r-1}) - o(x'_i) = 0$, conflicting with (\ref{eq:ocod}).
Hence $r = 1$.
Now the assertion is obvious.
\end{proof}
\nid
Define subsets $S_{i,l}, R_{i,l} \subset F^m$ as follows.
\begin{align*}
S_{i,l} &= \overbrace{\oo \tm \cdots \tm \oo}^{i-1} \tm \p^{l} \tm \overbrace{\oo \tm \cdots \tm \oo}^{m-i}, \\
R_{i,l} &= S_{i,l} \setminus S_{i,l-1} = \overbrace{\oo \tm \cdots \tm \oo}^{i-1} \tm \Vp^{l} \oo^\tm \tm \overbrace{\oo \tm \cdots \tm \oo}^{m-i}.
\end{align*} 
Let $i$ be as in Prop. \ref{prop:zd}.
Then, it holds that 
\begin{align*}
z_0x'(\tau')^*c^{-1} = (\overbrace{1, \ldots, 1}^{i-1}, z_0^{-1}, \overbrace{1, \ldots, 1}^{m-i}) \in R_{i,-1}, 
\end{align*}
and that, independent from $i$, 
\begin{align*} 
\int_{P_m\bs G_m} \psi(A_m {}^t(z_0 x'(\tau')^*c^{-1})) \dr A = -q^{-m}.
\end{align*}
From (\ref{eq:WMintfn}), we obtain 
\begin{align*}
\If_{x,z}^\circ = 
\begin{cases}
-\chi(\Vp) q^{-m} & \mbox{if $o(z_0) = -1$, and $x' \in R_{i,-1}$} \\
0 & \mbox{otherwise} 
\end{cases}
\end{align*} 
for $z$ outside of $\oo^m$.
Reminding $x = z_0^{-1}x'd$, we obtain from (\ref{eq:Ifz=}) 
\begin{align}
\If_z = 
\begin{cases}
- q^{-m} \sum_{i=1}^m \int_{F^m}\int_{R_{i,1}}\psi(- z(\{m\})^* {}^t  y) \psi(x{}^ty) \dr x \dr y & \mbox{if $o(z_0) = -1$} \\
0 & \mbox{otherwise} 
\end{cases}
\label{eq:Izfinunr}
\end{align}
for $z$ outside of $\oo^m$.
\begin{prop}\label{prop:}
Assume that the number of $j \in \{1, \ldots, m\}$ such that $o(z_j) = -1$ is greater than $1$. 
Then $\If_z$ is zero. 
\end{prop}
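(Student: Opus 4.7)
The plan is to evaluate formula~(\ref{eq:Izfinunr}) explicitly via Fourier analysis on $F^m$ and to extract the vanishing from an obstruction that arises only under the hypothesis. First, I carry out the inner $x$-integral. Writing $R_{i,1}$ as a Cartesian product and using $\int_{\Vp^k\oo}\psi(xy)\,\dr x = q^{-k}\1_{\Vp^{-k}\oo}(y)$ together with $\Vp\oo^\tm = \Vp\oo \setminus \Vp^2\oo$, one obtains
\[
\int_{R_{i,1}}\psi(x\,{}^ty)\,\dr x \;=\; \prod_{j\ne i}\1_\oo(y_j)\bigl(q^{-1}\1_{\Vp^{-1}\oo}(y_i)-q^{-2}\1_{\Vp^{-2}\oo}(y_i)\bigr).
\]

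Second, with $a := z(\{m\})^*$, carry out the outer $y$-integration coordinate by coordinate. The integrals $\int_\oo\psi(-a_jy_j)\,\dr y_j = \1_\oo(a_j)$ for $j\ne i$, and the $y_i$-integral collapses to $\1_\p(a_i)-\1_{\p^2}(a_i)=\1_{\Vp\oo^\tm}(a_i)$. Hence each summand in~(\ref{eq:Izfinunr}) reduces to $\prod_{j\ne i}\1_\oo(a_j)\cdot\1_{\Vp\oo^\tm}(a_i)$, so the sum over $i$ is nonzero exactly when $a=z(\{m\})^*$ has all coordinates in $\oo$ with exactly one coordinate in $\Vp\oo^\tm$.

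Third, I examine whether this shape for $a$ can actually be attained under the hypothesis $|\{j:o(z_j)=-1\}|\ge 2$. The derivation leading to~(\ref{eq:Izfinunr}) implicitly requires $\If_{x,z}^\circ\neq 0$ for some $x$ in the integration region, and this in turn passes through Prop.~\ref{prop:zd} (which gives $d=\diag(1,\dots,z_i^{-1},\dots,1)$ at the first index $i$ with $z_i\not\in\oo$) together with the Gauss-sum-type computation of the $A$-coset integral in~(\ref{eq:WMintfn}), forcing $z_0\,x'(\tau')^*c^{-1}\in R_{i,-1}$. Under the multi-tied hypothesis the companion position $\s_z(m-1)$ also carries a $z$-entry of order $-1$; following the shape of $c$ forced by the Iwasawa decomposition~(\ref{eq:defc}) and the match $o(c_{jj})=-o(d_{jj})$, a second coordinate of $x$ must lie outside $\oo$, so $x$ cannot belong to $R_{i,1}$ for any $i$. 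Consequently $\If_{x,z}^\circ$ vanishes on the entire integration region and every summand in~(\ref{eq:Izfinunr}) is identically zero, giving $\If_z=0$.

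The main obstacle is verifying rigorously that the companion tied index introduces an unavoidable second coordinate of $x$ outside $\oo$; this is a bookkeeping exercise combining the decomposition~(\ref{def:d}), the tie-breaking convention on $\s_z$, and Prop.~\ref{prop:Aform}. Once that point is pinned down, the Fourier computation in the first two steps renders every summand of~(\ref{eq:Izfinunr}) zero, completing the proof.
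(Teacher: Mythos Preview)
Your Steps~1--2 are on the right track and, properly interpreted, are essentially the paper's argument written out in more explicit Fourier-analytic form. But there is a genuine gap in how you conclude, and Step~3 is incorrect.

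The issue in Step~2 is your endpoint claim. You correctly compute that the $i$-th double integral collapses to $\1_{\Vp\oo^\tm}(a_i)\prod_{j\ne i}\1_\oo(a_j)$ with $a=z(\{m\})^*$. But $a$ \emph{always} has the shape you describe: it has exactly one nonzero entry, at position $\s_z(m)$, equal to $z_{\s_z(m)}^{-1}\in\Vp\oo^\tm$ (since $o(z_0)=-1$ by Prop.~\ref{prop:zd}~i)). So the condition ``$a$ has all coordinates in $\oo$ with exactly one in $\Vp\oo^\tm$'' is not an obstruction at all. What your computation actually shows is that the $i$-th term vanishes unless $i=\s_z(m)$. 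The point you are missing is that the index $i$ appearing in~(\ref{eq:Izfinunr}) is the \emph{fixed} index from Prop.~\ref{prop:zd} (the first $i$ with $z_i\notin\oo$), and under the hypothesis of the proposition this first index differs from the last one $\s_z(m)$. Hence $a_i=0\notin\Vp\oo^\tm$, and your own Step~2 formula gives zero directly. This is exactly the paper's argument: on the $y$-support $S_{i,-2}$ one has $y_{\s_z(m)}\in\oo$, so $\psi(-z(\{m\})^*\,{}^ty)\equiv 1$, and Fubini kills the remaining integral.

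Step~3 is wrong and should be deleted. You claim that under the multi-tied hypothesis a second coordinate of $x'$ is forced outside $\oo$, so that $\If_{x,z}^\circ$ vanishes identically. But Propositions~\ref{prop:zd} and~\ref{prop:x} make no use of the number of indices $j$ with $o(z_j)=-1$; they show that $d$ has exactly one nontrivial diagonal entry (at the first such index $i$), hence by (\ref{eq:ocod}) so does $c$, and consequently $\If_{x,z}^\circ$ is nonzero precisely on $x'\in R_{i,-1}$ regardless of the hypothesis. The ``companion position $\s_z(m-1)$'' does not enter the determination of $d$ or $c$ at all, because $\bar\tau=\{m\}$ by Prop.~\ref{prop:zd}~ii).
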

\begin{proof}
Let $i$ be as in Prop \ref{prop:zd}.
Viewing (\ref{eq:Izfinunr}), we consider the condition for 
\begin{align*}
\int_{R_{i,1}} \psi(x{}^ty) \dr x \neq 0.
\end{align*} 
It is that $y$ lies in $S_{i,-2}$. 
But since $z(\{m\})^*$ equals
\begin{align*}
 (\overbrace{0, \ldots, 0}^{j-1}, z_j^{-1}, \overbrace{0, \ldots, 0}^{m-j})
\end{align*} 
for the last number $j \neq i$, under our assumption, such that $o(z_j) = -1$, we have $\psi(z(\{m\})^*{}^ty) = 1$ for $y \in S_{i,-2}$. 
Now consider 
\begin{align*}
& \int_{S_{i,-2}} \int_{R_{i,1}} \psi(z(\{m\})^*{}^ty) \psi(x{}^ty) \dr x \dr y \\
&= \int_{S_{i,-2}} \int_{R_{i,1}} \psi(x{}^ty) \dr x \dr y = \int_{R_{i,1}} \int_{S_{i,-2}}  \psi(x{}^ty)  \dr y \dr x = 0.
\end{align*}
Hence the assertion.
\end{proof}
\nid
We compute that 
\begin{align*}
\int_{S_{i,-2}} \int_{R_{i,1}} \psi(- z(\{m\})^* {}^t  y) \psi(x{}^ty) \dr x \dr y = \frac{1-q}{q^2}
\end{align*} 
independent from $i$.
From (\ref{eq:WMintfn}),  
\begin{align*}
\If_z = \frac{m(q-1)}{q^{m+2}}
\end{align*} 
for $z \in R_{i,-1}$.
Combining (\ref{eq:Ifyz}), we obtain finally 
\begin{align*}
\Xi J^\circ(1_{n+2}) &= \int_{\oo^m} \psi(z_m) \If_z \dr z + \int_{F^m \setminus \oo^m} \psi(z_m) \If_z \dr z \\
&= 1 + ((m-1)q - m)\frac{m(q-1)}{q^{m+2}} \\
& \neq 0.
\end{align*} 
Thus, $\Xi_{n}$ is nontrivial.
Since $\Xi_r$ for $r <n$ is an integral over a subset of $P_r$, it is also applied to functions on $P_n$.
We define the $P_n$-linear map $\Omega_n$ by the iteration of the integrals:
\begin{align*}
\Omega_{n} = \Xi_4 \circ \cdots \circ \Xi_{n} \in \Hom_{P_n}(\Ib_{\psi_n}(\chi),\psi_n).
\end{align*} 
\begin{prop}\label{prop:Ln}
With the above notation, we have the followings. 
\begin{enumerate}[i)]
\item $\Omega_n$ is a nonzero constant multiple of $(\Lambda_n^\chi)^{-1}$.
\item $\Omega_n(J_{n,\chi}^\circ)$ is  a nonzero constant multiple of $\xi_n$.
\item $\Hom_{P_n}(\Ib_{\psi_n}(\chi),\psi_n) = \C \Omega_n$.
\end{enumerate}
\end{prop}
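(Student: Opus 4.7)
The strategy is to leverage the Schur-type uniqueness from Theorem~\ref{thm:PreSLam} together with the explicit nonvanishing of each $\Xi_r$ established in the preceding paragraphs. First I would note that, since $\psi_n$ is irreducible (Proposition~\ref{prop:red0}, Theorem~\ref{thm:1dimprtau}) and $\Lambda_n^\chi:\psi_n\to\Ib_{\psi_n}(\chi)$ is a $P_n$-module isomorphism by Theorem~\ref{thm:PreSLam}, Schur's lemma yields
\[
\Hom_{P_n}(\Ib_{\psi_n}(\chi),\psi_n)\;\simeq\;\End_{P_n}(\psi_n)\;=\;\C,
\]
with $(\Lambda_n^\chi)^{-1}$ as a distinguished generator. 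Consequently, parts (i) and (iii) both reduce to the single claim that $\Omega_n$ is not identically zero, and part (ii) then follows at once from applying the identity $\Omega_n=c\,(\Lambda_n^\chi)^{-1}$ (with $c\in\C^\tm$) to $J_{n,\chi}^\circ=\Lambda_n^\chi\xi_n$ (Proposition~\ref{prop:Presheq}), yielding $\Omega_n(J_{n,\chi}^\circ)=c\,\xi_n$.

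To establish $\Omega_n\ne 0$, I would show each factor $\Xi_r$ in the composition $\Omega_n=\Xi_4\circ\cdots\circ\Xi_n$ is injective, whence the entire composition is injective and in particular nonzero. Each $\Xi_r$ is a $P_r$-module map whose source and target (after tracking the $\nu^{2}$-shift in character at each step) are isomorphic to $\psi_r$ via an essential pre-Shalika map, and are therefore irreducible; irreducibility of the intermediate lifts is preserved by the functor $\Psi$ from Section~\ref{sec:PSM} via the Bernstein--Zelevinsky theory cited there. The preceding computation has shown that for each such $r$ and each admissible character twist of the form $\nu^{2k}\chi$, the map $\Xi_r$ does not vanish on the essential pre-Shalika form $J_{r,\nu^{2k}\chi}^\circ$ (via the explicit nonzero value at $g_r$ in the ramified case, or at $1_r$ in the unramified case). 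Hence each $\Xi_r$ is a nonzero intertwiner between irreducible representations, therefore injective by Schur, so the composition $\Omega_n$ is nonzero.

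The main obstacle is the bookkeeping of the character twist and the lift structure across the composition, i.e.\ verifying that the output of $\Xi_{r+2}$ lands exactly in the space to which $\Xi_r$ is applied, and that the target of each $\Xi_r$ retains irreducibility under iteration of $\Psi$. Once the identification of these intermediate irreducible modules is fixed and the preservation of irreducibility under $\Psi$ is invoked, the injectivity chain closes, $\Omega_n\ne 0$, and all three parts of the proposition follow from the one-dimensionality established at the outset.
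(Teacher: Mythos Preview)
Your reduction via Schur's lemma is exactly the paper's framework: once $\Omega_n\neq 0$ is shown, all three parts follow from the irreducibility of $\psi_n$ and Theorem~\ref{thm:PreSLam}/Proposition~\ref{prop:Presheq}. The issue is the iteration step, which you correctly flag as the ``main obstacle'' but do not actually resolve.

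The explicit nonvanishing computed just before the proposition shows only that $\Xi_r$ is nonzero as a map out of the $P_r$-module $\Ib_{\psi_r}(\chi')$. In the composition $\Omega_n=\Xi_4\circ\cdots\circ\Xi_n$, the intermediate $\Xi_r$ is being applied to functions on $P_n$, namely to the image of $\Xi_{r+2}\circ\cdots\circ\Xi_n$. Your appeal to $\Psi$ and preservation of irreducibility does not by itself identify this image with a specific lift, nor does it show that the integral operator $\Xi_r$ on $P_n$-functions coincides with the functorially lifted map and is therefore nonzero there. Saying ``once the identification is fixed'' is precisely the step that needs a proof.

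The paper closes this gap concretely rather than abstractly. Using the one-dimensionality of $\Hom_{P_n}(\Ib_{\psi_{n-2}}(\chi)_n,\Ib_{\psi_{n-2}}(\chi')_n)$ (from (\ref{eq:thnchi}) and Corollary~\ref{Cor:PreSLam}), it identifies $\Xi_n\circ\theta_n^\chi$ up to scalar with an explicit isomorphism $\Theta^n_{\chi,\nu^2\chi}$, which sends $\phi_{n-2}^{(0)}(J_{n-2,\chi}^\circ)$ to a nonzero multiple of $\phi_{n-2}^{(0)}(J_{n-2,\nu^2\chi}^\circ)$. Together with Proposition~\ref{prop:welldefJW}~iv), this shows that after each application of $\Xi_r$ the output, restricted suitably, is again (a multiple of) the $\phi$-lift of the next essential pre-Shalika form $J^\circ_{r-2,\nu^{2}\chi'}$, on which $\Xi_{r-2}$ is already known to be nonzero at $g_{r-2}$. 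Tracking the essential forms through the chain is what makes the induction go; your abstract injectivity chain needs exactly this identification to close.
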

\begin{proof}
By (\ref{eq:thnchi}) and Cor. \ref{Cor:PreSLam}, $\Hom_{P_{n}}(\Ib_{\psi_{n-2}}(\chi)_n, \Ib_{\psi_{n-2}}(\chi')_n)$ for arbitrary $\chi$ and $\chi'$ is spanned by 
\begin{align*}
\Theta_{\chi,\chi'}^n:= (\theta_n^{\chi'})^{-1} \circ \Lambda_n^{\nu^{-1}\chi'} \circ (\Lambda_n^{\nu^{-1}\chi})^{-1} \circ\theta_n^\chi,
 \end{align*}
which is bijective.
We have
\begin{align*}
\Theta_{\chi,\chi'}^n \circ \phi_{n-2}^{0} (J_{n-2,\chi}^\circ) = c \phi_{n-2}^{0}(J_{n-2,\chi'}^\circ), \ c \neq 0.
\end{align*}
Since $\Xi_n \circ \theta_n^{\chi} \in \Hom_{P_{n}}(\Ib_{\psi_{n-2}}(\chi)_n, \Ib_{\psi_{n-2}}(\nu^2\chi)_n)$ is nontrivial, it is a nonzero constant multiple of $\Theta_{\chi,\chi'}^n$, and we have
\begin{align*}
\Xi_n \circ \theta_n^{\chi} \circ \phi_{n-2}^{0} (J_{n-2,\chi}^\circ) = c' \phi_{n-2}^{0}(J_{n-2,\nu^2\chi}^\circ), \ c' \neq 0.
\end{align*} 
Therefore, 
\begin{align*}
\Xi_{n-2} \circ \Xi_n(J_{n,\chi}^\circ) = c' \Xi_{n-2} \circ  \phi_{n-2}^{0}(J_{n-2,\nu^2\chi}^\circ).
\end{align*} 
By iv) of Prop. \ref{prop:welldefJW}, 
\begin{align*}
\Xi_{n-2} \circ \Xi_n(J_{n,\chi}^\circ)(\ac{g}) = c' \phi_{n-2}^{(0)} \circ \Xi_{n-2} (J_{n-2,\nu^2\chi}^\circ)(\ac{g}), \ g \in P_{n-2}.
\end{align*} 
By the above computation, $\Xi_{n-2} (J_{n-2,\nu^2\chi}^\circ)$ takes a nonzero value at $\ac{g}_{n-2}$, and so does $\Xi_{n-2} \circ \Xi_{n}(J_{n,\chi}^\circ)$.
Thus $\Xi_{n-2} \circ \Xi_{n}$ is nontrivial.
Iterating such arguments, we find that $\Omega_n$ is nontrivial.
Now the assertions follow from the uniqueness of pre-Shalika models, and the irreducibilities of $\psi_n, \Ib_{\psi_n}(\chi)$.
\end{proof}
\nid 
Since the $P_n$-linear map $\Omega_n$ is an integral over a subset of $P_n$, it can be applied to Shalika forms on $G_n$, and obviously, 
\begin{align*}
\Omega_n \in \Hom_{N_n}(\Sb_\pi(\chi), \psi) \simeq \Hom_{G_n}(\Sb_\pi(\chi), \In_{N_n}^{G_n}(\psi)).
\end{align*} 
We cannot say $\Omega_n$ is nontrivial without assumption at this time.
However, we have:
\begin{thm}\label{thm:Ln}
Let $\pi \in \Ir(G_n)$ be generic.
Assume (\ref{ass:J1nz}).
Then, $\Omega_n$ is a nontrivial Whittaker model of $\pi$.
Additionally, assume $L(s,\pi) = 1$.
If $J \in \Sb_\pi(\chi)$ is a Shalika newform, then $J|_{P_n}$ is a nonzero constant multiple of $J_{n,\chi}^\circ$.
\end{thm}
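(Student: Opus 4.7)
The plan is to proceed in two stages, leveraging the structural results of Prop.~\ref{prop:Ln}.

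\textbf{Nontriviality of $\Omega_n$ on $\pi$.} Under assumption (\ref{ass:J1nz}), Prop.~\ref{prop:SPS} produces a Shalika form $\tilde J \in V_l$ whose restriction to $P_n$ is a nonzero multiple of the essential pre-Shalika form $J_{n,\chi}^\circ$. By Prop.~\ref{prop:Ln}(ii), $\Omega_n(\tilde J)|_{P_n}$ is then a nonzero constant multiple of $\xi_n$, so $\Omega_n(\tilde J)$ is a nonzero function on $G_n$. Hence $\Omega_n$ is a nontrivial element of $\Hom_{N_n}(\Sb_\pi(\chi),\psi)$, and by uniqueness of Whittaker models, $\Omega_n$ realizes the Whittaker model of $\pi$.

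\textbf{Identification of $J|_{P_n}$.} The operator $\Omega_n$ is built from iterated integrals of the form $\int \psi(\cdots)\, J(u\cdot g)\,\dr u$, which manifestly commutes with right translation: $\Omega_n(\pi(k)J)(g) = \Omega_n(J)(gk)$ for every $k \in G_n$. The Shalika newform satisfies $\pi(k)J = \w_\pi(k_{n,n})J$ for $k \in \Gamma(\cf_\pi)$, so $\Omega_n(J)$ inherits exactly the transformation property characterizing the Whittaker newform, forcing $\Omega_n(J) = c\cdot W^{new}$ for some $c \in \C$. The hypothesis $L(s,\pi) = 1$ now enters through the Shintani-type formula for the canonical Whittaker newform: when $L(s,\pi) = 1$, the formula collapses to $W^{new}(\ac{\Vp}^f) = \delta_{f,0}$ for $f \in \Z^{n-1}$. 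Combined with $P_n(\oo)$-invariance and the $(N_n,\psi)$-equivariance of $W^{new}$, this gives $W^{new}|_{P_n} = \xi_n$, and therefore $\Omega_n(J)|_{P_n} = c\,\xi_n$.

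Since $\Omega_n$ applied to $P_n$-functions coincides (up to nonzero scalar) with $(\Lam_n^\chi)^{-1}$ by Prop.~\ref{prop:Ln}(i), and sends $J_{n,\chi}^\circ$ to a nonzero multiple of $\xi_n$ by Prop.~\ref{prop:Ln}(ii), we may invert: $J|_{P_n}$ is a scalar multiple of $J_{n,\chi}^\circ$. It remains only to see that this scalar is nonzero.

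\textbf{The main obstacle} is verifying that $c \neq 0$, equivalently that $J|_{P_n} \not\equiv 0$, and concurrently that $J|_{P_n}$ truly lies in the pre-Shalika subspace coming from $V_l\simeq\psi_n$ rather than from higher pieces $V_i/V_{i+1}$ of the Bernstein--Zelevinsky filtration. I expect this to follow from $L(s,\pi)=1$ via the standard link between $L$-factors and BZ derivatives: each $V_i/V_{i+1}$ with $i < l$ is a lift from some $\Ir(G_r)$, $r<n$, and the triviality of $L(s,\pi)$ should, through Prop.~\ref{prop:redG} (together with the multiplicativity of $L$-factors along the filtration), preclude any such subquotient from carrying a pre-Shalika model relevant to $\chi$. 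Once that is established, Prop.~\ref{prop:asaV} applies and places $J$ in $V_l$ with $J|_{P_n} \not\equiv 0$, completing the identification.
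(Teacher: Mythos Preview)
Your overall strategy matches the paper's proof, but you have manufactured a difficulty at the ``main obstacle'' step that does not exist. Once you have shown in your first paragraph that $\Omega_n$ is a nontrivial element of $\Hom_{G_n}(\Sb_\pi(\chi),\In_{N_n}^{G_n}\psi)$, the irreducibility of $\Sb_\pi(\chi)\simeq\pi$ forces $\Omega_n$ to be \emph{injective} on the full $G_n$-module $\Sb_\pi(\chi)$, hence a bijection onto $\Wb_\pi$. The Shalika newform $J$ is by definition nonzero, so $\Omega_n(J)\neq 0$ and therefore $c\neq 0$ immediately. This is exactly how the paper argues: ``Since $\Omega_n$ as a Whittaker model is bijective, $\Omega_n(J)=cW^{new}$, $c\neq 0$.''

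Your proposed workaround---using $L(s,\pi)=1$ to rule out pre-Shalika models on the subquotients $V_i/V_{i+1}$ and then invoking Prop.~\ref{prop:asaV}---is unnecessary, and the step you flag (``$L(s,\pi)=1$ should preclude any such subquotient from carrying a pre-Shalika model'') is not something established in the paper; Prop.~\ref{prop:redG} relates pre-Shalika models on lifts to Shalika models on $G_r$, but there is no direct statement tying triviality of $L(s,\pi)$ to the absence of such models. So that route would require genuinely new input, whereas the bijectivity argument costs nothing. Drop the last paragraph and replace it with the one-line observation that $\Omega_n$ is injective on $\Sb_\pi(\chi)$ by irreducibility.
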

\begin{proof}
The first assertion follows from Prop. \ref{prop:SPS}, immediately.
Since $\Omega_n$ is an integral over a subset of $P_n$, 
\begin{align*}
\Omega_n(J|_{P_n}) = \Omega_n(J)|_{P_n}.
\end{align*}
Let $W^{new}$ be the Whittaker newform of $\pi$ (c.f. sec. \ref{sec:pre}).
Since $\Omega_n$ as a Whittaker model is bijective, $\Omega_n(J) = c W^{new}, c \neq 0$.
By Prop. \ref{prop:LW}, $\Omega_n(J)|_{P_n} = c \xi_n$.
By ii) of Prop. \ref{prop:Ln},
\begin{align*}
J|_{P_n} = \Omega_n^{-1}(\Omega_n(J)|_{P_n}) = \Omega_n^{-1}(c \xi_n) = c' J_{n,\chi}^\circ, c' \neq 0.
\end{align*} 
This proves the second assertion.
\end{proof}
\begin{prop}\label{prop:LW}
Let $\pi \in \Ir(G_n)$ be generic.
If $L(s,\pi) = 1$, then 
\begin{align*}
W^{new}|_{P_n} = \xi_n.
\end{align*}
\end{prop}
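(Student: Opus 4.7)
The plan is to compare the two functions on $P_n$ via the Iwasawa decomposition, reduce to diagonal values, establish dominant-cone support for $W^{new}$, and then kill the remaining dominant terms using a Rankin-Selberg integral against an unramified companion. Both $W^{new}|_{P_n}$ and $\xi_n$ are left $(N_n,\psi)$-equivariant and right $P_n(\oo)$-invariant: for $W^{new}$, any $k \in P_n(\oo)$ satisfies $k_{nj} = 0$ for $j < n$ and $k_{nn} = 1$, so $k \in \Gamma(\cf_\pi)$ with $\w_\pi(k_{nn}) = 1$. The decomposition $P_n = N_n \cdot \ac{D}_{n-1} \cdot P_n(\oo)$ therefore reduces the claim to showing $W^{new}(\ac{\Vp^f}) = \delta_{f,0}$ for every $f \in \Z^{n-1}$; the case $f = 0$ is the normalization $W^{new}(1_n) = 1$.

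For the support, fix $1 \le i \le n-1$ and set $f_n := 0$. The element $1_n + x E_{i,i+1}$ with $x \in \oo$ lies in $P_n(\oo)$, whence $W^{new}(\ac{\Vp^f}(1_n + x E_{i,i+1})) = W^{new}(\ac{\Vp^f})$. Rewriting $\ac{\Vp^f}(1_n + x E_{i,i+1}) = (1_n + x\Vp^{f_i - f_{i+1}} E_{i,i+1})\ac{\Vp^f}$ and applying left $\psi$-equivariance yields $\psi(x\Vp^{f_i - f_{i+1}}) W^{new}(\ac{\Vp^f}) = W^{new}(\ac{\Vp^f})$ for every $x \in \oo$; by (\ref{ass:psi}), $W^{new}(\ac{\Vp^f}) \ne 0$ forces $f_i \ge f_{i+1}$. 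Running over $i$, the support of $W^{new}$ on $\ac{D}_{n-1}$ is contained in the dominant cone $f_1 \ge f_2 \ge \cdots \ge f_{n-1} \ge 0$.

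To eliminate the remaining dominant values, I invoke the $G_n \tm G_{n-1}$ Rankin-Selberg integral against an unramified companion. The hypothesis $L(s,\pi) = 1$ is preserved under unramified twists (on the Weil-Deligne side, unramified characters act trivially on inertia), so for any unramified irreducible generic representation $\pi'$ of $G_{n-1}$ with Satake parameters $\alpha = (\alpha_1,\ldots,\alpha_{n-1})$,
\[
L(s, \pi \tm \pi') \;=\; \prod_{i=1}^{n-1} L(s, \pi \otimes \alpha_i) \;=\; 1.
\]
By Jacquet-Piatetski-Shapiro-Shalika, the pairing of $W^{new}$ with the spherical Whittaker vector $W_{\pi'}^{sph}$ realizes this $L$-factor on the nose:
\[
\int_{N_{n-1} \bs G_{n-1}} W^{new}(\ac{g}) \, W_{\pi'}^{sph}(g) \, |\det g|^{s - 1/2} \, \dr g \;=\; L(s, \pi \tm \pi') \;=\; 1.
\]
Unfolding by Iwasawa on $G_{n-1}$, using right $\ac{K}_{n-1}$-invariance of $W^{new}$, the sphericity of $W_{\pi'}^{sph}$, and the Casselman-Shalika formula $W_{\pi'}^{sph}(\Vp^\lambda) = \delta_B^{1/2}(\Vp^\lambda)\, s_\lambda(\alpha)$ on dominant $\lambda$, the identity becomes
\[
1 \;=\; c \sum_{\lambda_1 \ge \cdots \ge \lambda_{n-1} \ge 0} W^{new}(\ac{\Vp^\lambda}) \, s_\lambda(\alpha) \, \delta_B^{-1/2}(\Vp^\lambda) \, q^{-|\lambda|(s - 1/2)},
\]
with $c$ a positive volume constant normalized to $1$ by the $\lambda = 0$ term.

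Since the Schur polynomials $\{s_\lambda(\alpha)\}$ indexed by partitions $\lambda$ with at most $n-1$ parts form a $\C$-basis of the symmetric polynomials $\C[\alpha_1,\ldots,\alpha_{n-1}]^{\Sf_{n-1}}$, letting $\alpha$ vary in $(\C^\tm)^{n-1}$ modulo $\Sf_{n-1}$ and matching coefficients of each $s_\lambda$ forces $W^{new}(\ac{\Vp^\lambda}) = 0$ for every nonzero dominant $\lambda$. Combined with the support result, $W^{new}(\ac{\Vp^f}) = \delta_{f,0}$ for all $f \in \Z^{n-1}$, so $W^{new}|_{P_n} = \xi_n$. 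The principal obstacle is the JPSS identity $\Psi(s, W^{new}, W_{\pi'}^{sph}) = L(s, \pi \tm \pi')$ holding exactly, with no extraneous polynomial factor, when $\pi$ is generic and possibly ramified while $\pi'$ is unramified of rank $n-1$; this is classical but nontrivial, and everything else in the argument is formal.
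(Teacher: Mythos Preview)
Your argument is correct and takes a genuinely different route from the paper. The paper proves Proposition~\ref{prop:LW} in Section~\ref{sec:HO} by a Hecke-theoretic induction: it introduces the operators $T^c_{ah_i}$ acting on semi-$\Gamma(\cf_\pi)$-invariant Whittaker functions, orders $\Z^r_+$ lexicographically, and reduces the vanishing of $\ac{W}^{new}(\Vp^f)$ for each nonzero dominant $f$ to the vanishing of certain Hecke eigenvalues $T^c_{ah_i}W^{new}(1_n)$, which is supplied by Theorem~1.1 of \cite{K-Y} under the hypothesis $L(s,\pi)=1$. Your approach instead invokes the $\GL_n\times\GL_{n-1}$ test-vector identity of \cite{JPSS} directly, pairs $W^{new}$ against the spherical vector of a varying unramified $\pi'$, and reads off the individual values $\ac{W}^{new}(\Vp^\lambda)$ from the linear independence of Schur polynomials after observing that $L(s,\pi\times\pi')=\prod_i L(s,\pi\otimes\alpha_i)=1$ by multiplicativity and unramified-twist invariance. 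The paper's approach is inductive and stays close to the Hecke algebra, with the external input localized in \cite{K-Y}; yours is a one-shot argument that trades the induction for the heavier black box of the exact test-vector identity $\Psi(s,W^{new},W^{sph}_{\pi'})=L(s,\pi\times\pi')$, but is cleaner and in fact recovers the general Shintani-type expansion of $W^{new}$ on the torus, specializing to $\xi_n$ when $L(s,\pi)=1$. Both the dominant-cone support step and the normalization at $\lambda=0$ are handled identically in the two arguments.
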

\nid
This will be proved in the next section.
\section{Hecke operators}\label{sec:HO}
To prove Prop. \ref{prop:LW}, we introduce some Hecke operators.
Let
\begin{align*}
\Z^r_+ = \{f \in \Z^r \mid f_1 \ge \cdots \ge f_r \ge 0 \} 
\end{align*} 
equipped with the lexicographic order as follows.
If $f,f' \in \Z_+^r$ are different, then, for a unique $i \in \{1, \ldots, r\}$, it holds that $f_{i+1} = f_{i+1}', \ldots, f_r = f_r'$, and $f_i \neq f_i'$.
In this case, if $f_i < f_i'$, then we write  $f \prec f'$.
Let $\Hc_r$ denote the $K_r$ invariant Hecke algebra of $G_r$.
Let $T_f \in \Hc_r$ denote the Hecke operator which acts on right $K_r$-invariant functions $\xi$ on $G_r$ by 
\begin{align*}
T_f\xi(g) = \int_{K_r} \Ch(g; K_r \Vp^f K_r)\xi(g)\dr g, 
\end{align*} 
where $\dr g$ indicates the Haar measure on $G_r$ such that $\vol(K_r) =1$. 
An elementary computation shows that
\begin{align}
T_f\xi(g) = \sum_{\s \in \Sf_r} \sum_{u \in \Nc(f^\s)} \xi(g u \Vp^{f^{\s}}), \label{eq:Tfxi}
\end{align} 
where $f^\s$ indicates $(f_{\s(1)}, \ldots, f_{\s(r)})$, and $\Nc(f^\s)$ is a certain finite subset of $N_r \cap K_r$.
For $i \in \{1, \ldots, r \}$, let 
\begin{align*}
h_i = (\overbrace{1, \ldots, 1}^{i}, 0, \ldots, 0) \in \Z_+^r. 
\end{align*} 
For a positive integer $c$, let $\Gamma_{r+1}(c) = \Gamma(c) \subset G_{r+1}$ be the open compact subgroup defined at (\ref{def:Gc}), and $\Hc_{r+1}^c$ denote the $\Gamma_c$ invariant Hecke algebra of $G_{r+1}$.
Let $T_f^c \in \Hc_{r+1}^c$ denote the Hecke operator the Hecke operator which acts on right semi-$\Gamma(c)$-invariant functions $\xi'$ on $G_r$ by 
\begin{align*}
T_f^c \xi(g) = \int_{K_r} \Ch(g; \Gamma(c) \ac{\Vp}^f \Gamma(c))\xi(g)\dr g, 
\end{align*} 
where $\dr g$ indicates the Haar measure on $G_r$ such that $\vol(\Gamma(c)) =1$. 
An elementary computation shows that for a positive integer $a$, $T_{ah_i}^c$ acts on right semi-$\Gamma(c)$-invariant functions $\xi'$ by
\begin{align}
T_{ah_i}^c \xi'(g) = \sum_{\s \in \Sf_r} \sum_{u \in \Nc'(ah_i^\s)}\xi'(gu \begin{bmatrix}
\Vp^{ah_i^\s} &  \\
& 1
\end{bmatrix}), \label{eq:Thi}
\end{align} 
where $\Nc'(ah_i^\s)$ is a certain finite subset of $N_{r+1} \cap K_{r+1}$.
Let $\pi \in \Ir(G_{r+1})$ be generic.
Assume (\ref{ass:psi}) for $\psi$.
If $W$ is a right semi-$\Gamma(c)$-invariant Whittaker form of $\pi$, then 
\begin{align}
\ac{W}(\Vp^f) = 0,  \ \ \mbox{if $f \not\in \Z^r_+$} \label{eq:WVfnz}
\end{align} 
by Lemma \ref{lem:vanishlem}.
Additonally, if $f$ lies in $\Z_+^r$, then it holds that 
\begin{align}
T_{a h_i}^{c} W(\Vp^f) = \sum_{\s \in \Sf_r} c_\s \ac{W}(\Vp^{f+ ah_i^\s}),  c_\s> 0.\label{eq:ThiW}
\end{align} 
by (\ref{eq:Thi}) and (\ref{ass:psi}).

We prove Prop. \ref{prop:LW} by induction.
Let $W = W^{new}$ be the canonical Whittaker newform of $\pi$.
If $f = (1,0 ,\ldots, 0)$, then $\ac{W}(\Vp^f)$ is a constant multiple of $T_{h_1}W(1_{r+1})$ by (\ref{eq:ThiW}).
The statement in this case follows from Theorem 1.1. of \cite{K-Y}.
Assuming the statement for all nonzero $f' \prec f$ is true, we will show that for $f$ is true.
Let $s$ be the maximal number such that $f_s \neq 0$.
Write 
\begin{align*}
f_- = f -f_s h_s (\in \Z_+^r).
\end{align*} 

Suppose that $f_- = 0$.
Then, $\ac{W}(\Vp^f)$ is a nonzero constant multiple of $T_{f_sh_s}^c W(1_{r+1})$ by (\ref{eq:WVfnz}) and (\ref{eq:ThiW}).
The statement follows from the same theorem.

Suppose that $f_- \neq 0$.
By the induction hypothesis, $\ac{W}(\Vp^{f_-}) = 0$.
By (\ref{eq:ThiW}),  
\begin{align*}
0 = T_{f_sh_s}^cW(\ac{\Vp}^{f_-}) = \sum_{\s \in \Sf_r} c_\s W(\ac{\Vp}^{f_-+ f_s h_s^\s}).
\end{align*} 
If $f_-+ f_s h_s^\s$ lies in $\Z_+^r$ and $h_s^\s \neq h_s$, then $f_-+ f_s h_s^\s \prec f$.
Now the statement follows from the same theorem and the induction hypothesis.
This establishes the induction step.
\section{Zeta integrals}\label{sec:ZI}
Let $\psi$ be satisfying $(\ref{ass:psi})$.
Let $n = 2m$ be an even integer, and $\pi \in \Ir(G_n)$ be generic with a Shalika model relevant to $\chi$.
Let $J^{new}$ be a Shalika newform (recall this is unique up to constant multiples).
Assume that $J^{new}$ does not vanish at $g_n$ of Theorem \ref{thm:main}. 
This condition is empty if $\pi$ is supercuspdal.
Assume that 
\begin{align*}
\cf_\pi \ge me. 
\end{align*} 
Put 
\begin{align*}
l = \cf_\pi - (m-1)e \ (\ge e).
\end{align*} 
Let $\K(\cf_\pi)$ be as in sec. \ref{sec:main}.
First of all, we construct a $J_\pi \in \Sb_\pi(\chi)$ such that 
\begin{align*}
J_\pi(1_n) = 1, \pi(k)J_\pi = \chi \circ \det(d_k) J_\pi, k \in \K(\cf_\pi)
\end{align*} 
where $d_k$ indicates the $m \tm m$ block matrix of $k$ in the lower right corner.
In the case where $\chi$ is unramified, there is nothing to do.
Suppose that $\chi$ is ramified.
We use the following lemma.
\begin{lem}\label{lem:ftrint}
Let $\Omega$ be a field.
Let $f$ be a $\Omega$-valued function on a group $G$ such that $f(gk) = \xi(k) f(g)$ for a subgroup $K$ and a homomorphism $\xi: K \to \Omega^\tm$.
Then, we have the followings:
\begin{enumerate}[i)]
\item For the right translation $f^h$ by $h \in G$, 
\begin{align*}
f^h(g k) = \xi(h^{-1}kh) f^h(g), \ k \in K^h.
\end{align*} 
\item Let $K'$ be a subgroup containing $K$.
Assume that $\xi$ is extended to $\xi':K' \to \Omega^\tm$.
Then, 
\begin{align*}
f'(g) := \int_{K'/K} \xi'(k')^{-1} f(gk') \dr k' 
\end{align*} 
satisfies 
\begin{align*}
f'(gk) = \xi'(k') f(g), \ k' \in K'.
\end{align*} 
\end{enumerate}
\end{lem}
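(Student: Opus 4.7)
The plan is to verify both parts by direct substitution, using only the transformation law of $f$ and the homomorphism property of $\xi$ (resp.\ $\xi'$). No deep input is required; this lemma is a bookkeeping result and will be used in the next step to modify $J^{new}$ into a section with the sharper $(\K(\cf_\pi),\chi\circ\det\circ d_{\bul})$-equivariance needed to define $J_\pi$.

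For part (i), the key algebraic manipulation is to write $kh=h\cdot(h^{-1}kh)$. Since $k\in K^h=hKh^{-1}$, the factor $h^{-1}kh$ lies in $K$, so applying the defining transformation law of $f$ yields
\begin{align*}
f^h(gk)=f(gkh)=f\bigl((gh)\cdot(h^{-1}kh)\bigr)=\xi(h^{-1}kh)\,f(gh)=\xi(h^{-1}kh)\,f^h(g),
\end{align*}
which is the claimed identity.

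For part (ii), I will compute $f'(gk')$ by direct substitution into the defining integral and then perform the change of variables $k''\mapsto k'^{-1}k''$ on the coset space $K'/K$. In the intended application $K$ is open in $K'$, so $K'/K$ is discrete with counting measure, and left translation by $k'\in K'$ preserves this measure (more generally, any translation-invariant measure works). The homomorphism property gives
\begin{align*}
\xi'(k'^{-1}k'')^{-1}=\xi'(k'')^{-1}\,\xi'(k'),
\end{align*}
so the scalar $\xi'(k')$ factors outside the integral, yielding $f'(gk')=\xi'(k')\,f'(g)$.

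The only potential obstacle is conceptual rather than technical: one must fix the convention for the right translate $f^h$ (I will use $f^h(g)=f(gh)$) and ensure that the measure on $K'/K$ is $K'$-invariant. Once these conventions are pinned down, each part is a one-line computation.
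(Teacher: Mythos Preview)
Your proof is correct and follows exactly the approach the paper intends: the paper's own proof consists of the single word ``Obvious,'' and your direct substitution argument is precisely the verification that word stands in for. (Note that the displayed conclusion in part (ii) of the statement contains a typo---the right-hand side should be $\xi'(k')f'(g)$ rather than $\xi'(k')f(g)$---and you have correctly proved the intended identity.)
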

\begin{proof}
Obvious.
\end{proof}
\nid
Let $r$ be a positive integer $r$.
For a set $A$, let $(A)^r$ denote the $m$-tuple product of $A$. 
Let $M_r$ and $\Oc_r$ denote the ring of $r \tm r$ matrices with entries in $F$ and $\oo$, respectively.
If a subgroup $K \subset G_n$ consists of matrices 
\begin{align*}
\begin{bmatrix}
a& b  \\
c& d
\end{bmatrix}, a \in \Af, b \in \Bf, c \in \Cf, d \in \Df
\end{align*} 
for subsets $\Af,\Bf,\Cf,\Df \subset M_m$, we call $K$ the subgroup relevant to $\Af,\Bf,\Cf,\Df$.
Let $\delta_m$ be the diagonal matrix defined in $(\ref{def:dm})$.
For $t \in (\oo^\tm)^{m-1}$, put
\begin{align*}
v_m(t) &= u((\Vp^{-(m-1)e}t_1, \Vp^{-(m-2)e }t_2, \ldots, \Vp^{-e}t_{m-1})) \\
\Jf_t &= \pi(\begin{bmatrix}
1_m &  \\
& v_m(t)
\end{bmatrix})J^{new}.
\end{align*} 
From (\ref{eq:propSh}) and the assumption $J^{new}(g_n) \neq 0$, it follows that $\Jf_1(\ac{\delta}_m) \neq 0$.
By (\ref{eq:propSh}), 
\begin{align*}
\Jf_t(\ac{\delta}_m) = \chi(\prod_{i=1}^{m-1}t_i)\Jf_1(\ac{\delta}_m).
\end{align*} 
Firstly, we set 
\begin{align*}
J_1 = \int_{(\oo^\tm)^{m-1}}\chi(t)^{-1} \Jf_t \dr t,
\end{align*} 
which is not zero at $\ac{\delta}_m$.
For $u = \diag(u_1,\ldots, u_m)$ with $u_i \in \oo^\tm$, it holds that
\begin{align*}
\pi(\begin{bmatrix}
1_m&  \\
& u
\end{bmatrix}) J_1 &= \int_{(\oo^\tm)^{m-1}} \chi(t)^{-1} \pi(\begin{bmatrix}
1_m &  \\
& u
\end{bmatrix}
\begin{bmatrix}
1_m &  \\
& v_m(t)
\end{bmatrix})J \dr t \\
&= \int_{(\oo^\tm)^{m-1}} \chi(t)^{-1} \pi(\begin{bmatrix}
1_m &  \\
& v_m(t')
\end{bmatrix}\begin{bmatrix}
1_m &  \\
& u
\end{bmatrix})J \dr t \\
&= \chi(\prod_{i=1}^{m}u_i) J_1,
\end{align*} 
where $t' = (t_1 u_1/u_m, \ldots, t_{m-1} u_{m-1}/u_m)$. 
By i) of the lemma, $\Jf_t$ is invariant under the subgroup relevant to 
\begin{align*}
& \Oc_m, \ \{b \mid b_{ij} \in \p^{m-1} \}, \ \{c \mid c_{ij} \in \oo; i <m, c_{mj} \in \p^c \}, \\
& \{d \mid d_{ii} \in 1+ \p^e, d_{ij} \in \oo; i < j, d_{mj} \in \p^c; j <m, d_{ij} \in \p^{me}; j < i < m \}.
\end{align*} 
Therefore, it holds that 
\begin{align*}
\pi(k)J_1 = \chi(\prod_{i=1}^m k_{m+i,m+i}) J_1
\end{align*} 
for $k$ lying in the subgroup relevant to 
\begin{align*}
& \Oc_m, \ \{b \mid b_{ij} \in \p^{m-1} \}, \ \{c \mid c_{mj} \in \p^c;  c_{ij} \in \oo,  i <m \}, \\
& 
\{d \mid d_{ij} \in \oo; i \le j, d_{mj} \in \p^c; j <m, d_{ij} \in \p^{me}; j < i < m \}.
\end{align*} 
By (\ref{eq:propSh}),
\begin{align*}
J_1(\ac{\delta}_m) = J_1(\begin{bmatrix}
1_m &  \\
& \delta_m^{-1}
\end{bmatrix}) \neq 0
\end{align*} 
Secondly, we set 
\begin{align*}
J_2 = \pi(\begin{bmatrix}
1_m &  \\
& \delta_m^{-1}
\end{bmatrix})J_1.
\end{align*}
Then $J_2(1_n) \neq 0$, and, by i) of the lemma, 
 \begin{align*}
\pi(k)J_2 = \chi(\prod_{i=1}^m k_{m+i,m+i}) J_2
\end{align*}  
for $k$ lying in the subgroup relevant to 
\begin{align*}
& \Oc_m, \ \{b \mid b_{ij} \in \p^{n-2} \}, \ \{c \mid c_{mj} \in \p^{l}, c_{ij} \in \oo; i <m \}, \\
& \{\begin{bmatrix}
 u& {}^t  (\p^{(m-2)e})^{m-1}  \\
(\p^{c_\pi})^{m-1} & *
\end{bmatrix} \in K_m \mid u_{ij} \in 1 + \p^{(n-4)e}; i \neq j \}.
\end{align*} 
Thirdly, we set 
\begin{align*}
J_3 = \iiint \pi(\begin{bmatrix}
1_m & & \\
 & u & {}^t x \\
 & y &1  \\
\end{bmatrix})J_2 \dr u \dr x \dr y
\end{align*} 
where the integral in $x$ is over $(\oo/\p^{(m-2)e})^{m-1}$, that in $y$ is over $(\p^l/\p^{c_\pi})^{m-1}$, and that in $u$ over 
\begin{align*}
& K_{m-1}/\{u \in K_{m-1} \mid u_{ij} \in 1 + \p^{(n-4)e}; i \neq j \} \\
& \simeq 
\SL_{m-1}(\oo)/\{u \in \SL_{m-1}(\oo) \mid u_{ij} \in 1 + \p^{(n-4)e}; i \neq j \}.
\end{align*} 
By ii) of the lemma, 
\begin{align*}
\pi(k)J_3 = \chi \circ \det(d_k) J_3
\end{align*}  
for $k$ lying in the subgroup relevant to 
\begin{align*}
& \Oc_m, \ \{b \mid b_{ij} \in \p^{n-2} \}, \ \{c \mid c_{mj} \in \p^l;  c_{ij} \in \oo,  i <m \}, \\
& \{d = \begin{bmatrix}
u & {}^t \oo^{m-1} \\
(\p^l)^{m-1} & *
\end{bmatrix} \in K_m \mid u \in K_{m-1}\}.
\end{align*} 
By (\ref{eq:propSh}), 
\begin{align*}
J_3(1_n) 
&= \iiint J_2(\begin{bmatrix}
1_m & & \\
 & u & {}^t x \\
 & y &1  \\
\end{bmatrix}) \dr u \dr x \dr y \\ 
&= \iiint J_2(\begin{bmatrix}
u &{}^tx & \\
y & 1 &  \\
 &  &1_m  \\
\end{bmatrix}) \dr u \dr x \dr y 
\end{align*} 
is nonzero, where the integral region is the same as above.
Finally, we set 
\begin{align*}
J_\pi = c \int \pi(\begin{bmatrix}
1&x  \\
&1
\end{bmatrix})J_3 dx, 
\end{align*} 
where the integral in $x$ is over $\Oc_m/ \{b \mid b_{ij} \in \p^{n-2} \}$, and $c$ is the nonzero constant taken so that $J_\pi(1_n) = 1$.
Now the semi-$\K(\cf_\pi)$-invariance property of $J_\pi$ follows from ii) of the lemma.
Let $\K(\cf_\pi)^*$ and $J_\pi^* \in \Sb_{\pi^\vee}(\chi^{-1})$ be as in sec. \ref{sec:main}.
By i) of the lemma, 
\begin{align*}
\pi^\vee(k)J_\pi^* = \chi \circ \det(d_k) J_\pi^*, k \in \K(\cf_\pi)^*.
\end{align*} 

For a finite dimensional vector space $V$ over $F$, let $\Ss(V)$ denote the Schwartz space of $V$.
Define $\vp_{\cf_\pi} \in \Ss(M_m)$ as follows.
In the case where $\chi$ is unramified, it is the characteristic function of the ring $R_{\cf_\pi}$ (c.f. \ref{eq:Rc}).
Suppose that $\chi$ is ramified.
Define $\chi_0 \in \Ss(F)$ by 
\begin{align*}
\chi_0(x) = \Ch(x;\oo^\tm) \chi(x)
\end{align*} 
where $\Ch$ indicates the characteristic function.
Define $\phi_\chi^\circ \in \Ss(M_{m-1})$ by
\begin{align*}
\phi_\chi^\circ(x) = \prod_{1 \le i \neq j \le m-1}\Ch(x_{ij};\oo) \prod_{i=1}^{m-1} \chi_0(\Vp^e x_{ii}).
\end{align*} 
Define $\phi_{\chi,l} \in \Ss(M_{m})$ by
\begin{align*}
\phi_{\chi,l}(\begin{bmatrix}
x & {}^t y \\
z & w
\end{bmatrix}) = \frac{\Ch(y, z; (\p^{-l})^{m-1} \tm \oo^{m-1}) \chi_0(\Vp^e w)}{\vol(\SL_{m-1}(\oo))}\int_{\SL_{m-1}(\oo)}\phi_\chi^\circ(x  u) \dr u. 
\end{align*} 
Observe that 
\begin{align}
\phi_{\chi,l}(vk) = \chi \circ \det(k) \phi_{\chi,l}(v), \  k \in \Gamma_m(l). \label{eq:phchl}
\end{align} 
Define $\vp_{\cf_\pi} \in \Ss(M_n)$ by 
\begin{align*}
\vp_{\cf_\pi}(\begin{bmatrix}
a &b  \\
c& d
\end{bmatrix}) = \Ch(\begin{bmatrix}
a &  \\
c& d
\end{bmatrix}; R_{\cf_\pi}) \phi_{\chi,l}(b).
\end{align*} 
We have defined $\vp_{\cf_\pi}$ so that $J_\pi \vp_{\cf_\pi}$ is right $\K(\cf_\pi)$-invariant.
Let $Z(s,J_\pi, \vp_{\cf_\pi})$ denote the Godement-Jacquet zeta integral of $J_\pi$ and $\vp_{\cf_\pi}$ defined by
\begin{align*}
Z(s,J_\pi, \vp_{\cf_\pi}) = \int_{G_n} J_\pi \vp_{\cf_\pi}(g) |\det(g)|^s \dr g.
\end{align*} 
For $k \in \Z$, let 
\begin{align*}
B_{m,k}= \{b \in B_m \cap \Oc_m \mid o(\det(b)) = k \},
\end{align*} 
and 
\begin{align*}
c_k = \sum_{B_{m,k}/B_{m,0}} \ac{J}_\pi(b).
\end{align*} 
\begin{prop}\label{prop:ZJPhi}
With the above notation, 
\begin{align*}
Z(s + \frac{n-1}{2}, J_\pi, \vp_{\cf_\pi}) = q^{l(m-1)} \gf(\chi,\psi_{\Vp^{-e}})^{m}\vol(\K(\cf_\pi)) \sum_{i = 0}^\infty c_i q^{i(-s+\frac{1}{2})}.
\end{align*} 
\end{prop}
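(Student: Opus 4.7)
My approach is to evaluate the zeta integral by a change of variables adapted to the Shalika subgroup $S$, reducing the $S$-integration to Gauss sums and volume factors, and then collapsing the remaining $G_m$-integral to a sum over $B_m/B_{m,0}$ via Iwasawa. First, I would parametrize the open dense locus $\{g \in G_n : D(g) \in G_m\}$ (where $D(g)$ denotes the lower-right $m\times m$ block) by the bijective map
\[
(D, B, h, c) \longmapsto \begin{bmatrix} D & B \\ 0 & D\end{bmatrix}\begin{bmatrix} h & 0 \\ 0 & 1_m\end{bmatrix}\begin{bmatrix} 1_m & 0 \\ c & 1_m\end{bmatrix},
\]
compute the Jacobian against Haar on $G_n$, and apply Shalika equivariance to pull the factor $\chi(\det D)\,\psi(\tr(D^{-1}B))$ outside $J_\pi$, leaving $J_\pi(\begin{bmatrix} h & 0 \\ c & 1_m\end{bmatrix})$ inside. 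Because $\vp_{\cf_\pi}$ factors blockwise by design, the $(D, B, h, c)$-integrations separate cleanly.

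Next, the $B$-integral takes the shape $\int_{M_m}\psi(\tr(D^{-1}B))\,\phi_{\chi,l}(B)\,dB$ coupled to the constraint $\Ch(Dh+Bc;\Oc_m)$. By the explicit form of $\phi_{\chi,l}$ (after averaging over $\SL_{m-1}(\oo)$), this integral factors and the $m$ diagonal-entry contributions combine to a product of Gauss sums $\gf(\chi,\psi_{\Vp^{-e}})^m$ (using the identity (\ref{eq:GSprop}) to absorb the $D^{-1}$-twist into a $\chi(\det D)^{-1}$-factor that cancels the Shalika $\chi(\det D)$), while the $m-1$ last-column entries of $B$ ranging over $\p^{-l}\setminus\oo$ each contribute a volume $q^l$. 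The $c$- and $D$-integrations are then collapsed by the $\K(\cf_\pi)$-invariance of $J_\pi\vp_{\cf_\pi}$---in particular, $\K(\cf_\pi)$ contains the subgroup of elements $\begin{bmatrix} 1_m & 0 \\ c & 1_m\end{bmatrix}$ with $c$ in the support cut out by the corresponding $\vp_{\cf_\pi}$-factor, as well as $\begin{bmatrix} 1_m & 0 \\ 0 & u\end{bmatrix}$ for $u \in \Rc_m(l)^\times$---so that these integrations contribute $\vol(\K(\cf_\pi))$. Aggregating yields the constant $q^{l(m-1)}\,\gf(\chi,\psi_{\Vp^{-e}})^m\,\vol(\K(\cf_\pi))$.

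The residual integral is $\int_{G_m} J_\pi(\ac{h})\,|\det h|^{s-1/2}\,dh$, the twist $|\det h|^{s-1/2}$ emerging from combining $|\det g|^{s+(n-1)/2}$ with the Jacobian. Since $\ac{K}_m \subset \K(\cf_\pi)$, the function $h \mapsto J_\pi(\ac{h})$ is right $K_m$-invariant, so Iwasawa $G_m = B_m K_m$ reduces the integral to a sum over $B_m/B_{m,0}$. The support condition $D(g) \in \Rc_m(l) \subset \Oc_m$ forces $\det h \in \oo$ on surviving terms, so the sum ranges over $B_m^+/B_{m,0}$, and rewriting $|\det b|^{s-1/2} = q^{o(\det b)(-s+1/2)}$ reproduces $\sum_i c_i\,q^{i(-s+1/2)}$. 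The main obstacle will be the coupling $\Ch(Dh+Bc;\Oc_m)$ in the $B$-integral: this entangles $B$ with $h, c, D$, and one must verify---using the explicit $R_{\cf_\pi}$-structure, the support conditions on the other blocks, and the form of $\Rc_m(l)$---that this coupling is automatically satisfied on the relevant region, so that the entry-wise Gauss-sum decomposition of the $B$-integral is valid.
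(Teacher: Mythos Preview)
Your decomposition is essentially the paper's (which writes $g = \begin{bmatrix} 1 & b \\ & 1\end{bmatrix}\begin{bmatrix} a & \\ & d\end{bmatrix}\begin{bmatrix} 1 & \\ c & 1\end{bmatrix}$; this is yours with $D = d$, $B = bd$, $h = d^{-1}a$), and the obstacle you flag at the end is indeed the crux. But your proposed resolution---that the coupling $\Ch(Dh+Bc;\Oc_m)$ is ``automatically satisfied on the relevant region''---is circular. The support of $\vp_{\cf_\pi}$ only pins the lower-right block $D$ to $\Rc_m(l)$, not to its unit group, so collapsing the $D$-integration to $\vol(\K(\cf_\pi))$ already presupposes $D \in \Rc_m(l)^\times = \Gamma_m(l)$. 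You might hope the Gauss-sum computation of the $B$-integral itself excludes non-unit $D$, but that computation produces clean Gauss sums only when the $B$-integral is over the full support of $\phi_{\chi,l}$, i.e., only when the coupling is already inactive; that in turn needs $Bc \in \Oc_m$, which needs $c = D^{-1}(Dc)$ to have integral entries, which needs $D^{-1} \in \Oc_m$.

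The paper breaks this circle with a direct $\psi$-equivariance argument before any separation of integrals. After reducing $d$ to upper-triangular form on its first $m-1$ rows, if some $d_{kk}\in\p$ one exhibits a $b'$ with $b'_{kk}\in\p^{-1}$, $b'dc\in\Oc_m$, and $b'd\in\oo E_{kk}$ (using that the entries of $dc$ already lie in $\p^e$ or $\p^l$ by the $R_{\cf_\pi}$-constraint). Left-translating $g$ by $\begin{bmatrix}1 & b'\\ & 1\end{bmatrix}$ then keeps $g$ inside $\supp(\vp_{\cf_\pi})$ but scales $J_\pi\vp_{\cf_\pi}(g)$ by $\psi(b'_{kk})\neq 1$, so such $g$ contribute zero. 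Only after this does one know $d\in\Gamma_m(l)$; then $bdc\in\Oc_m$ follows, the upper-left constraint decouples to $a\in\Oc_m$, and the remaining reduction to $\sum_i c_i\,q^{i(-s+1/2)}$ proceeds as you outline. You need to insert this step.
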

\begin{proof}
We observe what $g \in G_n$ contributes to the zeta integral.
Using a complete system of representatives for $\Gamma_{n}(\cf_\pi)/\K(\cf_\pi)$, we find by Lemma 2.1. of \cite{K-Y} that we may assume that $g$ is in the form of 
\begin{align}
\begin{bmatrix}
a + bdc & bd  \\
dc & d 
\end{bmatrix} = \begin{bmatrix}
1_m & b   \\
& 1_m 
\end{bmatrix}\begin{bmatrix}
a &  \\
& d
\end{bmatrix}\begin{bmatrix}
1_m &  \\
c & 1_m
\end{bmatrix}, 
\label{eq:ng0K}
\end{align} 
where $a,b,c,d$ are $m \tm m$ block matrices.
We claim that $d$ lies in $\Gamma_m(l)$, using the identity
\begin{align}
\begin{bmatrix}
1_m & b' \\
& 1_m
\end{bmatrix} g = \begin{bmatrix}
(a + b dc) + b' dc & bd + b' d  \\
dc & d 
\end{bmatrix}. \label{eq:dingammal}
\end{align}
If $g$ contributes to the zeta integral, then (\ref{eq:ng0K}) lies in $\supp(\vp_{\cf_\pi})$,  therefore 
\begin{align}
(dc)_{mj} \in \p^l, (dc)_{ij} \in \p^e, i \in \{1, \ldots, m-1\}, j \in \{1, \ldots, m\}, \label{eq:dcmj}
\end{align} 
and we may write
\begin{align*}
d = \begin{bmatrix}
d^\circ & *  \\
* & d_{mm}
\end{bmatrix} \in \begin{bmatrix}
\Oc_{m-1} & {}^t(\oo)^{m-1}  \\
(\p^l)^{m-1} & \oo
\end{bmatrix}.
\end{align*} 
By the $\K(\cf_\pi)$-invariance property of $J_\pi \vp_{\cf_\pi}$, we may assume that $d^\circ$ is an upper trianglar matrix.
It suffices to show that $\det(d)\in \oo^\tm$.
Assume that $\det(d) \in \p$.
Then $d_{kk} \in \p$ for some $k \in \{1, \ldots, m\}$.
Using the assumption (\ref{eq:dcmj}), and that $d^\circ$ is an upper trianglar matrix, we find that it is possible to take a $b'$ so that 
\begin{align*}
b_{kk}' &\in \p^{-1}, b'_{jj} =0, j \in \{1, \ldots, m\} \setminus \{k\},  \\
b'd c &\in \Oc_m, \\
b'd &\in \oo E_{kk}
\end{align*}
where $E_{kk}$ indicates the $k$-th row and $k$-th column matrix unit.
But, since (\ref{eq:dingammal}) also lies in $\supp(\vp_{\cf_\pi})$, and  
\begin{align*}
J_\pi \vp_{\cf_\pi}(\begin{bmatrix}
1_m & b' \\
& 1_m
\end{bmatrix} g) = \psi(b_{kk}') J_\pi \vp_{\cf_\pi}(g)
\end{align*} 
by (\ref{eq:propSh}), $g$ does not contribute.
Hence, the claim.
Now, it is easy to see that 
\begin{align*}
\begin{bmatrix}
&  \\
c & 
\end{bmatrix} = \begin{bmatrix}
&  \\
 & d^{-1} 
\end{bmatrix}\begin{bmatrix}
&  \\
d c & 
\end{bmatrix} \in R_{\cf_\pi}
\end{align*} 
and we may assume that 
\begin{align*}
b_{ii} &\in \Vp^{-e}\oo^\tm, \ i \in \{1, \ldots, m\}, \\ 
b_{mj} &\in \oo, b_{jm} \in \p^{-l}, \ j \in \{1, \ldots, m-1\}
\end{align*} 
by (\ref{eq:phchl}).
Therefore, $bdc$ lies in $\Oc_m$, and so does $a$.
Now the assertion follows from the argument of Lemma \ref{lem:RSSP}.
\end{proof}
For $\vp \in \Ss(M_n)$, let $\vp^\sharp$ be the Fourier transform of $\vp$: 
\begin{align*}
\vp^\sharp(x) = \int_{G_n} \vp(y)\psi(\tr(yx)) \dr y
\end{align*} 
where $\dr y$ indicates the self-dual Haar measure on $M_n$.
We define
\begin{align*}
\vp_{\cf_\pi}^*(x) = \vp_{\cf_\pi}^{\sharp}(\upsilon_{\cf_\pi}^{-1} {}^t x w_n), 
\end{align*} 
where $\upsilon_{\cf_\pi}$ is the matrix (\ref{def:ups}).
Then, $J_\pi^*\vp_{\cf_\pi}^*$ is $\K_{\cf_\pi}^*$-invariant, and
\begin{align}
q^{\cf_\pi s}Z(s,J_\pi^*,\vp_{\cf_\pi}^*) = Z(s,J_\pi^\vee, \vp_{\cf_\pi}^\sharp), \label{eq:vpcf}
\end{align}  
where $J^\vee$ is defined by $J^\vee(g) = J(g^{-1})$.
We give an explicit description for $\vp_{\cf_\pi}^*$.
In the case where $\chi$ is unramified, it is given by
\begin{align*}
\vp_{\cf_\pi}^*(\begin{bmatrix}
a & b  \\
c& d
\end{bmatrix}) = \Ch(\begin{bmatrix}
a & b \\
c& d
\end{bmatrix};R_{\cf_\pi}^*)\prod_{j=2}^{m}\Ch(b_{j1}; \p^{l}),
\end{align*} 
where $R_{\cf_\pi}^*$ is the ring (c.f. \ref{eq:Rc*}).
In the case where $\chi$ is ramified, define $\phi_\chi^{\circ *} \in \Ss(M_{m-1})$ by
\begin{align*}
\phi_\chi^{\circ*}(x) = \prod_{1 \le i \neq j \le m-1}\Ch(x_{ij};\oo) \prod_{i=1}^{m-1} \chi_0^{-1}(x_{ii}).
\end{align*} 
Define $\phi_{\chi,l}^* \in \Ss(M_{m})$ by
\begin{align*}
\phi_{\chi,l}^*(\begin{bmatrix}
w &  y \\
{}^tz & x
\end{bmatrix}) = \frac{\Ch(y, z; \oo^{m-1} \tm (\p^{l})^{m-1}) \chi_0^{-1}(w)}{\vol(\SL_{m-1}(\oo))} \int_{\SL_{m-1}(\oo)}\phi_\chi^{\circ*}(x  u) \dr u. 
\end{align*} 
Then, the explicit form of $\vp_{\cf_\pi}^*$ is 
\begin{align*}
\vp_{\cf_\pi}^*(\begin{bmatrix}
a & b  \\
c& d
\end{bmatrix}) = \gf(\chi,\psi_{\Vp^{-e}})^{m}\phi_{\chi,l}^*(d) \Ch(\begin{bmatrix}
a & b \\
c& 
\end{bmatrix};R_{\cf_\pi}^*)\prod_{j=2}^{m}\Ch(b_{j1}; \p^{-l}).
\end{align*} 
For $k \in \Z$, let 
\begin{align*}
B_{m,k}^e = \{b \in B_m \mid o(\det(b)) = k, b_{11} \in \oo, b_{1j} \in \p^{e-l}; j >1 , b_{ij} \in \oo; i>1 \},
\end{align*} 
and 
\begin{align*}
c_k = \sum_{B_{m,k}^e/B_{m,0}^e} \ac{J}_\pi^*(b).
\end{align*} 
Similar to Prop \ref{prop:ZJPhi}, we can show:
\begin{prop}\label{prop:ZJ*Phi*}
With the above notation, 
\begin{align*}
Z(s + \frac{n-1}{2}, J_\pi^*, \vp_{\cf_\pi}^*) = q^{l(m-1)}\gf(\chi,\psi_{\Vp^{-e}})^{m} \vol(\K(\cf_\pi)^*)\sum_{i = 0}^\infty c_i^* q^{i(-s+\frac{1}{2})}.
\end{align*} 
\end{prop}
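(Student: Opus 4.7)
The plan is to mimic closely the argument of Proposition \ref{prop:ZJPhi} with the roles of the lower-left and upper-right blocks interchanged, and with the Shalika form $J_\pi^*$ transforming under $\K(\cf_\pi)^*$. As in the proof of Proposition \ref{prop:ZJPhi}, I would first apply Lemma 2.1 of \cite{K-Y} to a complete set of representatives for $\Gamma_n(\cf_\pi)/\K(\cf_\pi)^*$ to assume that $g \in G_n$ is in the factored form
\begin{align*}
g = \begin{bmatrix} 1_m & b \\ & 1_m \end{bmatrix} \begin{bmatrix} a & \\ & d \end{bmatrix} \begin{bmatrix} 1_m & \\ c & 1_m \end{bmatrix}.
\end{align*}
Then I would read off the support conditions from the explicit formula for $\vp_{\cf_\pi}^*$: the factor $\Ch(\,\cdot\,;R_{\cf_\pi}^*)$ forces $c \in \Oc_m\left[\begin{smallmatrix}\Vp^l&\\&\Vp^e 1_{m-1}\end{smallmatrix}\right]$ and $a \in w_m {}^t\Oc_m(l')w_m$, while $\phi_{\chi,l}^*(d)$ plus the product $\prod_{j\ge 2}\Ch(b_{j1};\p^{-l})$ controls $d$ and $b$.

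Next, paralleling the key claim of Proposition \ref{prop:ZJPhi} that $d \in \Gamma_m(l)$, I would show here that after the $\K(\cf_\pi)^*$-invariance is applied, $d$ may be assumed to lie in the analog $w_m{}^t\Gamma_m(l)w_m$ with $\det(d) \in \oo^\tm$. This is done by the same trick: if some diagonal entry of $d$ (in the appropriate basis dictated by $w_m$) sits in $\p$, then one finds a $b'$ with $b'd \in \oo E_{kk}$, $b'dc \in \Oc_m$, and a single entry $b'_{kk} \in \p^{-1}$, so that left-multiplication by $\left[\begin{smallmatrix}1_m&b'\\&1_m\end{smallmatrix}\right]$ preserves the support of $\vp_{\cf_\pi}^*$ but picks up a nontrivial $\psi$-factor via the Shalika equivariance of $J_\pi^*$, forcing the integrand to cancel. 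Once $\det(d)\in \oo^\tm$, we may transfer $d$ into the upper-left corner via the analog of the argument following \eqref{eq:ng0K}, and use the equivariance of $J_\pi^*$ to reduce the contributing $g$ to $\ac{b}$ with $b\in B_m$, subject exactly to the constraints defining $B_{m,k}^e$ (namely $b_{11}\in\oo$, $b_{1j}\in\p^{e-l}$ for $j>1$, $b_{ij}\in\oo$ for $i>1$).

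Assembling the pieces, the integral collapses to $\sum_{i\ge 0} c_i^* q^{i(-s+1/2)}$ times the product of the volume $\vol(\K(\cf_\pi)^*)$, the factor $q^{l(m-1)}$ coming from integrating out the $b_{1j}$ coordinates for $j>1$ (each contributing $q^l$, exactly as the $y$-integral contributed $q^l$ in Proposition \ref{prop:ZJPhi}), and the Gauss-sum factor $\gf(\chi,\psi_{\Vp^{-e}})^m$ that is built explicitly into the definition of $\phi_{\chi,l}^*$. The main obstacle is bookkeeping: verifying that the conjugation by $w_m$ and $\upsilon_{\cf_\pi}$ in the definitions of $R_{\cf_\pi}^*$ and $\vp_{\cf_\pi}^*$ reproduces precisely the shifted constraints defining $B_{m,k}^e$ (so that the upper-right block $b_{1j}\in\p^{e-l}$ replaces the lower-left constraint $c_{mj}\in\p^l$), and checking that the Gauss-sum factor introduced by $\phi_{\chi,l}^*$ exactly matches the one explicitly extracted in the statement. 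One could also verify consistency at the end via the identity \eqref{eq:vpcf} together with the Godement--Jacquet local functional equation.
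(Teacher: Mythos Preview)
Your proposal is correct and matches the paper's approach: the paper gives no proof beyond the remark ``Similar to Prop.~\ref{prop:ZJPhi}, we can show,'' and your plan is precisely to rerun that argument with the roles reversed under the conjugation by $\upsilon_{\cf_\pi}$ and $w_n$. One small point of care: the quotient you write as $\Gamma_n(\cf_\pi)/\K(\cf_\pi)^*$ should be replaced by the appropriate conjugate (since $\K(\cf_\pi)^*$ need not sit inside $\Gamma_n(\cf_\pi)$), but this is exactly the bookkeeping you already flag as the main obstacle.
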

From now, assume that $\pi$ is unitary.
Since $\pi$ is unitary, its Shalika forms are matrix coefficients.
From (\ref{eq:vpcf}), and the Godemant-Jacquet functional equation (\cite{G-J}), it follows that 
\begin{align*}
\frac{Z(1-s+ \frac{n-1}{2}, J_\pi^*, \vp_{\cf_\pi})}{L(1-s,\pi^\vee)} = \ep_\pi \frac{Z(s+\frac{n-1}{2},J_\pi, \vp_{\cf_\pi})}{L(s,\pi)} 
\end{align*}
where both the $L$-factors and the root number $\ep_\pi$ are same as those defined by Whittaker forms (c.f. \cite{J-PS-S2}).
By Prop. \ref{prop:ZJPhi}, \ref{prop:ZJ*Phi*}, the right and left hand side lie in $\C[q^{-s}]$ and $\C[q^{s}]$, respectively.
Hence, both sides are nonzero constant.
Thus we have:
\begin{thm}\label{thm:pairnf}
With the above notation and assumptions,
\begin{align*}
\sum_{i = 0}^\infty c_i q^{i(-s+\frac{1}{2})} = L(s,\pi), \
\sum_{i = 0}^\infty c_i^* q^{i(-s+\frac{1}{2})} = \ep_\pi L(s,\pi^\vee).
\end{align*} 
\end{thm}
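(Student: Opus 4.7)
The plan is to combine Propositions~\ref{prop:ZJPhi} and~\ref{prop:ZJ*Phi*} with the Godement--Jacquet functional equation displayed immediately above the theorem; the sketch given there just needs bookkeeping and a match of leading coefficients.

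Introduce the formal series
\begin{align*}
A(s) = \sum_{i \ge 0} c_i\, q^{i(-s+1/2)}, \qquad A^*(s) = \sum_{i \ge 0} c_i^*\, q^{i(-s+1/2)}.
\end{align*}
Propositions~\ref{prop:ZJPhi} and~\ref{prop:ZJ*Phi*} identify $Z(s+\frac{n-1}{2},J_\pi,\vp_{\cf_\pi}) = K\, A(s)$ and $Z(s+\frac{n-1}{2},J_\pi^*,\vp_{\cf_\pi}^*) = K^*\, A^*(s)$ with the explicit nonzero constants $K,K^*$ appearing there. Since conjugation preserves Haar measure on the unimodular group $G_n$ and $\K(\cf_\pi)^* = \upsilon_{\cf_\pi}^{-1} \K(\cf_\pi) \upsilon_{\cf_\pi}$, the two volumes coincide, so $K = K^*$.

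Plugging these identifications into the functional equation, the right side $K A(s)/L(s,\pi)$ lies in $\C[q^{-s}]$: the $L$-factor of a generic representation of $G_n$ has the form $P(q^{-s})^{-1}$ with $P \in \C[q^{-s}]$, $P(0)=1$, so $A(s)\,P(q^{-s}) \in \C[[q^{-s}]]$, and by the general Godement--Jacquet theory it is also a Laurent polynomial, hence lies in $\C[q^{-s}]$. A symmetric argument places the left side $K A^*(1-s)/L(1-s,\pi^\vee)$ in $\C[q^s]$. Their equality forces the common value to be a nonzero constant $D$.

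To determine $D$, compare constant terms. From $A(s) = (D/(K\ep_\pi))\,L(s,\pi)$, setting $q^{-s}=0$ yields $c_0 = D/(K\ep_\pi)$. But $c_0 = \ac{J}_\pi(1_m) = J_\pi(1_n) = 1$ by the defining normalization of $J_\pi$ in sec.~\ref{sec:main}, and $L(s,\pi)|_{q^{-s}=0} = 1$. Therefore $D = K\ep_\pi$, which gives $A(s) = L(s,\pi)$; the companion identity with $K = K^*$ then reads $A^*(1-s) = \ep_\pi L(1-s,\pi^\vee)$, equivalently $A^*(s) = \ep_\pi L(s,\pi^\vee)$. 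As an internal check, the leading coefficient of the second identity recovers $J_\pi^*(1_n) = \ep_\pi$, consistent with what was asserted after the construction of $J_\pi^*$ in sec.~\ref{sec:main}.

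The main obstacle is not the algebraic identification of $D$ but verifying the prerequisites: (i)~$J_\pi$ must be a matrix coefficient of $\pi$ so that the Godement--Jacquet functional equation is available, which follows from unitarity; (ii)~the relation~(\ref{eq:vpcf}) must be invoked to rewrite the functional equation in terms of $\vp_{\cf_\pi}^*$ rather than the Fourier transform $\vp_{\cf_\pi}^\sharp$; and (iii)~the $L$-factor and root number produced by Godement--Jacquet must be shown to agree with those defined via Whittaker models, for which we cite~\cite{J-PS-S2}. Once these are in place the constant-term bookkeeping above closes the proof.
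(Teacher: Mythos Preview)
Your proof is correct and follows the paper's own argument essentially verbatim: invoke Propositions~\ref{prop:ZJPhi}, \ref{prop:ZJ*Phi*}, feed them into the Godement--Jacquet functional equation via (\ref{eq:vpcf}), and observe that a function simultaneously in $\C[q^{-s}]$ and $\C[q^s]$ is constant. The paper stops at ``both sides are nonzero constant,'' whereas you go one step further and pin down that constant by matching leading coefficients, using $c_0 = J_\pi(1_n) = 1$ and the observation $\vol(\K(\cf_\pi)) = \vol(\K(\cf_\pi)^*)$ (conjugate compact subgroups in a unimodular group); this is exactly the bookkeeping the paper leaves implicit. One small slip: when you first write ``the right side $K A(s)/L(s,\pi)$'' you drop the factor $\ep_\pi$ present in the functional equation, though you correctly restore it two lines later when solving for $D$.
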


\end{document}